\documentclass[a4paper,12pt]{amsart}

\usepackage{amssymb,mathrsfs, color,hyperref,nicefrac,dsfont,csquotes,enumitem,
xparse,aliascnt,geometry,tikz,pgfplots}
\usetikzlibrary{patterns}
\pgfmathdeclarefunction{gauss}{2}{%
  \pgfmathparse{1/(#2*sqrt(2*pi))*exp(-((x-#1)^2)/(2*#2^2))}%
}

\usepackage[showonlyrefs]{mathtools}

\geometry{a4paper,hscale=0.7,vscale=0.75,centering}

\thickmuskip=7mu plus 3mu minus 1mu

\setlist[itemize]{leftmargin=*}
\setlist[enumerate]{leftmargin=*,wide, labelwidth=!, labelindent=3em}
\setlist[enumerate,1]{label=(\roman*),ref=(\roman*),wide, labelwidth=!, labelindent=0pt}
\setlist[description]{leftmargin=*} 
 
 \numberwithin{equation}{section}

\renewcommand\nicefrac[2]{#1/#2}
\newcommand\tforall{\quad\text{ for any }}

\newcommand\1{\mathds{1}}
\newcommand\dd{\, d}
\newcommand\Unif{\operatorname{Unif}}

\newcommand\R{\mathbb{R}}
\newcommand\N{\mathbb{N}}
\renewcommand{\r}{\mathbb{R}}
\newcommand{\rd}{\mathbb{R}^d}

\newcommand{\E}{\mathbb{E}}

\newcommand\MALA{MALA }
\newcommand\wc{\cdot}
\renewcommand{\P}{\mathbb{P}}
\renewcommand{\tilde}{\widetilde}
\renewcommand{\epsilon}{\varepsilon}

\newcommand{\ob}{\overline{\beta}}
\newcommand{\ua}{\underline{\alpha}}
\newcommand{\up}{\underline{\pi}}

\newtheorem{prop}{Proposition}[section]
\newtheorem{thm}[prop]{Theorem}
\newtheorem{theorem}[prop]{Theorem}

\newtheorem{lemma}[prop]{Lemma}

\theoremstyle{definition}
\newtheorem{remark}[prop]{Remark}
\newtheorem{example}[prop]{Example}

\title[Quantitative contraction rates for Markov chains]{Quantitative contraction rates for Markov chains on general state spaces}
\author{Andreas Eberle}
\address{Universit\"at Bonn, Institut f\"ur Angewandte Mathematik,
  Endenicher Allee 60, 53115 Bonn, Germany}
\email{eberle@uni-bonn.de}
\urladdr{http://www.uni-bonn.de/$\sim$eberle}
\author{Mateusz B. Majka}
\address{Department of Mathematics, King's College London, Strand, London WC2R 2LS, UK, and, The Alan Turing Institute, 96 Euston Road, London NW1 2DB, UK}
\curraddr{Department of Statistics, University of Warwick, Coventry CV4 7AL, UK}
\email{mateusz.b.majka@gmail.com}
\urladdr{https://sites.google.com/site/mateuszbmajka/}

\subjclass[2010]{60J05, 60J22, 65C05, 65C30, 65C40}
\keywords{Markov chains, Wasserstein distances, quantitative bounds, couplings, Euler schemes, Metropolis algorithm.} 
\thanks{AE is supported by the German Science foundation through the Hausdorff Center for Mathematics. MM is supported by the EPSRC grant no.\ EP/P003818/1. Both authors acknowledge partial support by the grant 346300 for IMPAN from the Simons Foundation and the matching 2015-2019 Polish MNiSW fund.}

\begin{document}

\begin{abstract}
We investigate the problem of quantifying contraction coefficients of Markov transition kernels in Kantorovich ($L^1$ Wasserstein) distances. For diffusion processes, relatively precise quantitative bounds on contraction rates have recently been derived by combining appropriate couplings with carefully designed Kantorovich distances.
In this paper, we 
partially carry over this approach from diffusions to Markov chains.
We derive quantitative lower bounds on contraction rates for Markov chains on general state spaces that are 
powerful if the dynamics is dominated by small local moves. For Markov chains 
on $\mathbb R^d$ with isotropic transition kernels, the general bounds can be used efficiently together with a coupling that combines maximal and reflection coupling.
The results are
applied to Euler discretizations of stochastic differential equations with non-globally contractive drifts, and to the Metropolis adjusted Langevin algorithm for sampling
from a class of probability measures on high dimensional state spaces that are not globally log-concave.
\end{abstract}

\maketitle

\section{Introduction}

In recent years, convergence bounds for Markov processes in
Kantorovich ($L^1$ Wasserstein) distances have emerged as a powerful alternative to more traditional approaches based on the total variation distance \cite{MT}, spectral gaps and $L^2$ bounds
\cite{JSTV, DSC, DLP}, or entropy estimates \cite{JSTV,DSC,BGL}. In particular, Hairer, Mattingly and Scheutzow have developed an analogue to Harris' Theorem assuming only
local strict contractivity in a Kantorovich distance on the \enquote{small} set
and a Lyapunov condition combined with non-strict contractivity outside, cf.\ \cite{HM,HMS}. Meanwhile there have been numerous extensions and applications of their result \cite{HSV, CH, Butkovsky, DFM}. 

In \cite{JO}, Joulin and Ollivier have shown that
strict Kantorovich contractivity of the transition kernel implies bounds 
for the variance and concentration estimates for ergodic averages of
a Markov chain. 
Their results have since been extended to cover more general frameworks by Paulin \cite{Paulin}.
More recently, Pillai and Smith \cite{PS} as well as Rudolf and Schweizer \cite{RS} have developed a 
perturbation theory for Markov chains that are contractive in a
Kantorovich distance, cf.\ also Huggins and Zou \cite{HugginsZou} as well as Johndrow and Mattingly \cite{JM} for related results. These works show that variants of the results in \cite{JO} carry over to perturbations
of the original chain, thus paving the way for a much broader range
of applications.

All the works mentioned above assume that, at least locally, strict contractivity 
holds w.r.t.\ an $L^1$ Wasserstein distance based on some 
underlying distance function on the state space of the Markov chain.
The contraction rate is the key quantity in the resulting bounds, and
it is hence important to develop applicable methods for quantifying
contraction rates. 

Contractivity with respect to the $L^1$ Wasserstein distance based
on the Euclidean distance in $\rd$ is sometimes interpreted as non-negative
Ricci curvature of the Markov chain w.r.t.\ this metric \cite{RenesseSturm,JO,Ollivier}.
This is a strong condition that is often not satisfied in applications.
However, in many cases it is still possible to obtain contractivity with respect to a Kantorovich distance in which the underlying distance function has been modified accordingly. This allows for applying the results from \cite{JO} to a significantly broader class of examples. For diffusion processes, a corresponding approach to
quantitative contraction rates in appropriately designed metrics has been developed systematically in recent years in a series of papers \cite{EberleCR,Eberle2015,Zimmer,EGZ,EZ}, see also \cite{CWa,CW,WangNeumann} for previous results. The approach has been extended 
to L\'evy driven SDEs in \cite{Majka17, Majka}, see also \cite{LuoWang,JianWangBernoulli}.

Below we propose a corresponding approach for Markov chains on general metric state spaces. The approach is powerful in situations where the dynamics is dominated by small, local moves. This will be demonstrated below for Euler schemes for non-globally contractive stochastic differential equations, as well as for the Metropolis-adjusted Langevin Algorithm (MALA). In these cases, the Ricci curvature condition required in \cite{JO} is not satisfied in the standard $L^1$ Wasserstein distance and hence the construction of an alternative metric is required. For dynamics dominated by large or global moves, our approach does not apply in the form presented here. Sometimes, related approaches
can be used nevertheless, see e.g.\ \cite{BEZ} for the construction of a contractive distance for  Hamiltonian Monte Carlo.

\section{Main results}

Let $p(x, dy)$ be a Markov transition kernel on a separable metric 
space $(S,d)$. To study contraction properties of $p$ we 
construct distance functions $\rho :S\times S\to [0,\infty )$ by transforming the metric $d$ in an appropriate way. Note that if $f_0: [0,\infty) \to [0,\infty)$ is a concave, increasing function with $f_0(0) = 0$ and $f_0(r) > 0$ for $r \neq 0$, then $\rho(x,y) = f_0(d(x,y))$ is a metric on $S$. More generally,
let $a,\delta \ge 0$ be non-negative constants, and let $V\colon S\to [0,\infty )$ be a non-negative measurable function on $S$. We set
\[
f:=f_0+a\1_{(0,\infty )} ,
\]
and we consider distance functions of the form
\begin{equation}
\label{eq:1}
\begin{split}
\rho (x,y) &= f(d(x,y))+\delta\, (V(x)+V(y))\1_{x\neq y}\\
 &=  f_0(d(x,y))+(a+\delta V(x)+\delta V(y))\1_{x\neq y} \,.
\end{split}
\end{equation}
We assume that $f_0$ is continuous and in some of our results we will choose $a=0$ and $V\equiv 0$. {Similar but slightly different classes of distance functions have been used e.g.\ in \cite{HM2,HMS,Butkovsky,EGZ} to study properties of Markov chains and diffusion processes.}

For probability measures
$\mu$ and $\nu$ on $S$, the Kantorovich distance ($L^1$
Wasserstein distance) $\mathcal W_\rho (\mu ,\nu )$ based on the
underlying distance function $\rho$ is defined as
\begin{equation}
\label{eq:2}
\mathcal W_\rho (\mu ,\nu )\ =\inf_{X\sim\mu ,Y\sim\nu} \E[\rho (X,Y)] .
\end{equation}
Here the infimum is over all couplings of $\mu$ and $\nu$, i.e.,
over all random variables $X,Y$ defined on a common probability
space $(\Omega ,\mathcal A,\P )$ such that $\mathbb{P}\circ X^{-1}=\mu$
and $\mathbb{P}\circ Y^{-1}=\nu $. 

For $f_0\equiv 0$, $a=1$ and $V\equiv 0$, $\mathcal W_\rho$ coincides with the total variation distance $d_{\text{TV}}(\mu,\nu)$ (or with $d_{\text{TV}}(\mu,\nu)/2$, depending on the convention used in the definition of the total variation distance),
 whereas for $f_0(r)=r$, $a=0$ and $V\equiv 0$, $\mathcal W_\rho$ is the
standard $L^1$ Wasserstein distance $\mathcal W_d$ on $(S,d)$.
The distance functions we consider are in between these two extremes. Notice, however, that if $a>0$ then
\begin{equation}
\label{eq:3}
d_{\rm TV}(\mu ,\nu )\le\ a^{-1}  \mathcal W_\rho (\mu ,\nu ),
\end{equation}
and if $f(r)\ge br$ for some constant $b>0$ then
\begin{equation}
\label{eq:4}
\mathcal W_d (\mu ,\nu )\le\ b^{-1}  \mathcal W_\rho (\mu ,\nu ),
\end{equation}
Therefore, in these cases, contraction properties w.r.t.\ $\mathcal W_\rho$ directly
imply upper bounds for the total variation and $L^1$ Wasserstein
distances w.r.t.\ the metric $d$.\medskip

We now assume that we are given a {\em Markovian coupling} of the
transition probabilities $p(x,\wc )$ ($x\in S$) in the form of 
measurable maps $X',Y':\Omega\to S$, defined on a measurable space $(\Omega ,\mathcal A)$, and a probability kernel $(x,y,A)\mapsto \P_{x,y}(A)$ from $S\times
S\times\Omega $ to $[0,1]$ such that for any $x,y\in S$,
\begin{equation}
\label{eq:4a}
X'  \sim  p(x,\wc )\qquad \mbox{and}\qquad
Y' \sim  p(y,\wc )\qquad \mbox{under }\P_{x,y}.
\end{equation}
For probability measures $\mu$ on $S$ and $\gamma$ on $S\times S$ let
$(\mu p)(B) = \int \mu(d x) p(x,B)$ for $B\in\mathcal B(S)$, and $\P_\gamma (A )  =  \int\gamma(d x\, d y) 
\P_{x,y}(A )$ for $A\in\mathcal A$. Note that if $\gamma$ is a 
coupling of two probability measures $\mu$ and $\nu$ on $S$, then
under $\P_\gamma$ the joint law of $(X',Y')$ is a coupling of the probability measures $\mu p$ and $\nu p$, i.e.,
\begin{equation}\label{eq:4b}
X'\sim \mu p\qquad\mbox{and}\qquad Y'\sim\nu p\qquad\mbox{under }\P_\gamma .
\end{equation}
Our goal is to derive explicit bounds of the form
\begin{equation}
\label{eq6}
\E_{x,y}[\rho (X',Y')] \le (1-c)  \rho (x,y)\qquad\mbox{for any }
x,y\in S,
\end{equation}
where $c$ is a strictly positive constant. Here the choice of the metric $\rho$ is adapted in order to maximize the value of $c$ in
our bounds. If \eqref{eq6} holds, then the transition kernel $p$ is
a strict contraction w.r.t.\ the distance $\mathcal W_\rho$.

\begin{lemma}\label{lem:a}
Suppose that \eqref{eq6} holds for all $x,y\in S$. Then
\begin{equation}
\label{eq7}
\mathcal W_\rho (\mu p,\nu p)\le (1-c)  \mathcal W_\rho (\mu ,\nu )\qquad\mbox{for all }\mu ,\nu\in {\mathcal P}(S).
\end{equation}
\end{lemma}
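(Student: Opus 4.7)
The plan is to lift the pointwise contraction estimate \eqref{eq6} to an estimate on measures by averaging against an arbitrary coupling and then optimizing. The Markovian coupling $\mathbb{P}_{x,y}$ together with the construction of $\mathbb{P}_\gamma$ in \eqref{eq:4b} is tailor-made for exactly this argument.

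First, I would fix $\mu,\nu \in \mathcal{P}(S)$ and let $\gamma$ be an arbitrary coupling of $\mu$ and $\nu$. By the observation stated just before \eqref{eq:4b}, the joint law of $(X',Y')$ under $\mathbb{P}_\gamma$ is a coupling of $\mu p$ and $\nu p$. Since $\mathcal{W}_\rho(\mu p,\nu p)$ is defined as an infimum over all such couplings, this immediately yields
\begin{equation}
\mathcal{W}_\rho(\mu p,\nu p) \;\le\; \mathbb{E}_\gamma\bigl[\rho(X',Y')\bigr].
\end{equation}
Next, I would disintegrate $\mathbb{P}_\gamma$ via its defining formula $\mathbb{P}_\gamma(A) = \int \gamma(dx\, dy)\, \mathbb{P}_{x,y}(A)$, which together with a standard monotone-class/Fubini argument gives
\begin{equation}
\mathbb{E}_\gamma\bigl[\rho(X',Y')\bigr] \;=\; \int \mathbb{E}_{x,y}\bigl[\rho(X',Y')\bigr]\, \gamma(dx\, dy).
\end{equation}
Applying the hypothesis \eqref{eq6} under the integral then yields the bound $(1-c)\int \rho(x,y)\,\gamma(dx\,dy)$.

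Finally, I would take the infimum over all couplings $\gamma$ of $\mu$ and $\nu$. Since $\mathcal{W}_\rho(\mu,\nu)$ is this very infimum (by \eqref{eq:2}, couplings on $S\times S$ and couplings realized as random variables agree), this delivers \eqref{eq7}. If one is uneasy about whether the infimum is attained — which is not needed — one may instead select a minimizing sequence $(\gamma_n)$ with $\int \rho\, d\gamma_n \to \mathcal{W}_\rho(\mu,\nu)$ and pass to the limit in the chain of inequalities.

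The only genuinely non-cosmetic point to verify is that the map $(x,y)\mapsto \mathbb{E}_{x,y}[\rho(X',Y')]$ is measurable, so that the integrals above make sense; this follows because $\mathbb{P}_{x,y}$ is a probability kernel and $\rho$ is measurable on $S\times S$ (it is built from the continuous function $f_0$ of the metric $d$, the indicator $\mathbf{1}_{x\neq y}$, and the measurable function $V$). I would flag this as essentially the only step beyond formal manipulation, and it is standard. Everything else is a clean transport-style argument with no analytical obstacle.
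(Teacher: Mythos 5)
Your proof is correct and follows the same route as the paper's: average the pointwise contraction \eqref{eq6} against an arbitrary coupling $\gamma$ of $\mu$ and $\nu$, use that $(X',Y')$ under $\P_\gamma$ couples $\mu p$ and $\nu p$, and take the infimum over $\gamma$. The extra remark on measurability of $(x,y)\mapsto\E_{x,y}[\rho(X',Y')]$ is a sensible detail the paper leaves implicit, but it does not change the argument.
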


\begin{proof}
Let $\mu$ and $\nu$ be probability measures on $S$ and suppose that $\gamma $ is a coupling of $\mu$ and $\nu$. Then, under
$\P_\gamma $, the joint law of $(X',Y')$ is a coupling of
$\mu p$ and $\nu p$. Therefore by \eqref{eq6}, 
\begin{eqnarray*}
\mathcal W_\rho (\mu p ,\nu p) &\le &\E_\gamma [\rho (X',Y')]
=\int \E_{x,y}[\rho (X',Y')]  \gamma ( d x  \dd y)\\
&\le & (1-c) \int\rho (x,y) \gamma ( d x  \dd y).
\end{eqnarray*}
The assertion follows by taking the infimum over all couplings of
$\mu $ and $\nu$.
\end{proof}

In the terminology of Joulin and Ollivier \cite{JO}, \eqref{eq7} says
that the Markov chain has a Ricci curvature lower bound $c$ on the
metric space $(S,\rho )$. By general results, such a bound has many important consequences including quantitative convergence to a 
unique equilibrium \cite{EberleMP}, upper bounds on biases and variances as well as concentration inequalities for ergodic averages
\cite{JO,Paulin}, a central limit theorem for ergodic averages \cite{KW}, 
robustness under perturbations \cite{PS,RS,HugginsZou,JM}, etc.
{However, in applications, it is usually not clear how to choose a distance function $\rho$ such that we have good bounds for $c$.}
This is the problem addressed in this paper 
for the case of a ``local dynamics'' where the Markov chain is mainly making 
``small'' moves. Depending on whether or not the probability measures $p(x,\cdot )$ and $p(y,\cdot )$ have a significant overlap for $x$ close to $y$, we suggest
two different approaches.

\subsection{Contractivity with positive coupling probability}
\label{sec:firstresult}

Our first two general results apply in situations where the probability measures $p(x,\wc )$ and $p(y,\wc )$ have  a significant overlap if
$x$ and $y$ are sufficiently close. In this case we can always consider a coupling $((X',Y'),\P_{x,y})$ of the transition probabilities such that
$\P_{x,y}[X'=Y']>0$ for $x$ close to $y$. This enables us to obtain strict
contractivity in metrics that have a total variation part, i.e., the function
$f$ defining the underlying distance has a discontinuity $a>0$ at $0$.

To state the results, we fix a positive constant $\epsilon >0$ and couplings
$((X',Y'),\P_{x,y})$ as above. For $x,y\in S$ we set
\begin{align}\label{eq:5}
r&=d(x,y),& R'&=d(X',Y'),&\Delta R&=R'-r,
\end{align}
and we define
\begin{align}
\label{eq:8}\beta (x,y) &= \E_{x,y}[\Delta R] ,\\
\label{eq:9}\alpha (x,y) &= \E_{x,y}\left[ \lvert(\Delta R)^-\wedge \epsilon \rvert^2\right] ,\\
\label{eq:10}\pi (x,y) &= \P_{x,y}[R'=0] ,
\end{align}
{where $(\Delta R)^-=\max (-\Delta R,0)$. In particular,}
\begin{equation}\label{eq:alphalb}
\alpha (x,y)\ \ge \ \mathbb{E}_{x,y}[ (\Delta R)^2 \1_{\{ R' \in (r - \varepsilon, r ) \} } ].
\end{equation}
{One can think of $\beta (x,y)$ as a drift for the coupling distance, whereas $\alpha (x,y)$ provides a lower bound for fluctuations that decrease the distance and $\pi (x,y)$ is the probability of coupling successfully in the next step.}
Suppose that there exist functions $\overline{\beta }:(0,\infty )\to\r $ and $\underline{\alpha},\underline{\pi}:(0,\infty )\to [0,\infty )$ such that
for any $r>0$ and $x,y\in S$ with $d(x,y)=r$,
\begin{equation}\label{eq:11}
\beta (x,y)\ \le\ \ob (r),\quad \alpha (x,y)\ \ge \ \ua (r),\quad \mbox{and}\quad
\pi (x,y)\ge \up (r).
\end{equation}
Hence $\ob (r)$ is an upper bound for the expectation of the increase 
$\Delta R$ of the distance during a single transition step of coupled Markov
chains with initial states $x$ and $y$ such that $d(x,y)=r$. Similarly,
$\ua (r)$ is a lower bound for distance decreasing fluctuations of $\Delta R$,
and $\up (r)$ is a lower bound for the coupling probability. We make the following assumptions on $\ua ,\ob $ and $\up$:

\begin{enumerate}[label=(A\arabic*)]
\item\label{enum:a1} There exists a positive constant $r_0\in (0,\infty )$ such that\smallskip
\begin{enumerate}
\item\label{enum:a1i} $\inf_{r\in (0,r_0]}\up (r)\ >\ 0$,\qquad {and}\smallskip
\item\label{enum:a1ii} $\inf_{r\in (r_0,s)}\ua (r)\ >\ 0$\quad for any $s\in (r_0,\infty )$.
\end{enumerate}\smallskip
\item\label{enum:a2} $\sup_{r\in (0,s)}\ob (r)<\infty$\quad for any $s\in (0,\infty )$.\smallskip
\item\label{enum:a3} $\limsup_{r\to\infty}r^{-1}\ob (r)\ <\ 0$.
\end{enumerate}

\begin{thm}\label{thm:1}
Suppose that \ref{enum:a1}, \ref{enum:a2} and \ref{enum:a3} are satisfied, and let 
\begin{equation}\label{eq:13}
\rho (x,y)=f(d(x,y)) \,,
\end{equation} 
where $f:[0,\infty )\to [0,\infty )$ is the concave increasing function defined in
\eqref{eq:5000} below. Then for any $x,y\in S$,
\begin{equation}
\label{eq:14}
\E_{x,y}[\rho (X',Y')]\le(1-c)  \rho (x,y) \,,
\end{equation}
where $c$ is an explicit strictly positive constant defined in \eqref{eq:5f} below.
\end{thm}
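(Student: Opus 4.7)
The plan is to build $f_0$ as an antiderivative $f_0(r)=\int_0^r \phi(s)\,ds$ of a positive, continuous, decreasing function $\phi$, so that $f_0$ is concave with $f_0(0)=0$ and $f=f_0+a\1_{(0,\infty)}$ has a jump of size $a$ at the origin. The contractivity estimate will then reduce to a scalar inequality in $r=d(x,y)$ alone, and $\phi$ will be tailored so that this inequality delivers the explicit $c>0$.

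The pointwise analytic input is a concavity-with-remainder bound
\begin{equation*}
f_0(R')-f_0(r)\;\le\;\phi(r)(R'-r)\;-\;\tfrac{1}{2}L(r)\bigl((\Delta R)^-\wedge\epsilon\bigr)^2,\qquad L(r):=\inf_{u\in((r-\epsilon)\vee 0,\,r)}(-\phi'(u)),
\end{equation*}
which I would derive from the identity $f_0(r)-f_0(R')=\phi(r)(r-R')+\int_{R'}^{r}(\phi(s)-\phi(r))\,ds$ valid for $R'<r$, together with $\phi(s)-\phi(r)\ge L(r)(r-s)$ on $((r-\epsilon)\vee 0,\,r)$. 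Taking $\E_{x,y}$ of this estimate and using $f(R')=f_0(R')+a\1_{\{R'>0\}}$ with $f_0(0)=0$, together with the bounds \eqref{eq:11}, yields for every $r>0$
\begin{equation*}
\E_{x,y}[f(R')]-f(r)\;\le\;\phi(r)\,\ob(r)\;-\;\tfrac{1}{2}L(r)\,\ua(r)\;-\;a\,\up(r).
\end{equation*}
Establishing \eqref{eq:14} thus reduces to producing $\phi$ and $c>0$ such that $\phi(r)\,\ob(r)-\tfrac{1}{2}L(r)\,\ua(r)-a\,\up(r)\le -c\,(f_0(r)+a)$ for all $r>0$.

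The main obstacle is the construction of $\phi$ so that this scalar inequality holds uniformly with a positive $c$: the three driving terms dominate on different scales and must be balanced globally. I would split $(0,\infty)$ into three regimes using the threshold $r_0$ from \ref{enum:a1} and a large threshold $R_1$ fixed via \ref{enum:a3}. On $(R_1,\infty)$ the function $\phi$ can be taken constant, so that $f_0$ grows linearly and the negative drift $\ob(r)$, of order $-r$ by \ref{enum:a3}, dominates. On $(r_0,R_1]$ the function $\ua$ is bounded away from zero by \ref{enum:a1ii} and $\ob$ is locally bounded by \ref{enum:a2}; I would choose $\phi$ of exponential form $\phi(r)=\phi(R_1)\exp\bigl(\int_{r}^{R_1}\lambda(s)\,ds\bigr)$ with $\lambda\ge 0$ tuned so that the concavity gain $\tfrac12 L\,\ua$ absorbs $\phi\,\ob+c\,f_0$, in the spirit of Eberle's ODE balance for diffusions. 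On $(0,r_0]$ the coupling bound $\up$ is bounded below by \ref{enum:a1i}, so the jump term $-a\,\up(r)$ absorbs the remaining positive drift, which is locally bounded by \ref{enum:a2}. Taking $c$ to be the minimum of the three resulting rates and gluing $\phi$ continuously at $r_0$ and $R_1$ then produces the explicit constant claimed in \eqref{eq:5f}.
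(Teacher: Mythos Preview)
Your proposal is correct and follows essentially the same route as the paper: the same concavity/Taylor estimate reducing to a scalar inequality in $r$, and the same three-regime split $(0,r_0]$, $(r_0,R_1]$, $(R_1,\infty)$ with the jump term, the concavity term, and the negative drift respectively carrying the contraction. The paper's only refinement is to factor your $\phi$ as a product $\varphi\cdot g$ of two decreasing functions, with $\varphi$ (built via dual intervals $\hat I_s=\{r:s\in I_r\}$) absorbing the drift $\ob$ and $g$ manufacturing the rate $c$; this factorization cleanly handles the bookkeeping for your $L(r)=\inf_{I_r}(-\phi')$ term but is not conceptually different from your single-$\phi$ construction.
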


The proof is given in Section \ref{sec:proofs12}. Explicit
expressions for the function $f$ and the contraction rate $c$ depending only on $\ua ,\ob ,\up $ and $\epsilon$ are given in Subsection \ref{subsec:choiceOfMetricThm1}.  Although these 
expressions are somehow involved, they can be applied to derive 
quantitative bounds in concrete models. In particular, the
asymptotic dependence of the contraction rate on parameters of
the model can often be made explicit. This will be demonstrated for the Euler scheme in Section \ref{sec:euler}.

By Lemma \ref{lem:a}, Theorem \ref{thm:1} implies that the transition kernel $p$ 
is contractive with rate $c$ w.r.t.\ the $\mathcal W_\rho$ distance on probability
measures on $S$. Since the function $f$ defined in \eqref{eq:5000} is bounded from below by a multiple of both $\1_{(0,\infty )}$ and of the identity, the theorem yields
quantitative bounds for convergence to equilibrium both w.r.t.\ the total
variation and the standard $L^1$ Wasserstein distance.
\bigskip

The assumption \ref{enum:a3} imposed in Theorem \ref{thm:1} is sometimes too restrictive. By a modification of
the metric, it can be replaced by the following Lyapunov condition:

\begin{enumerate}[resume, label=(A\arabic*)]
\item\label{enum:a4}  There exist a measurable function $V:S\to [0,\infty )$ and
$C$, $\lambda\in (0,\infty )$ s.t.\smallskip
\begin{enumerate}
\item \label{enum:a4i}$pV\ \le\ (1-\lambda )  V  +  C,\qquad\mbox{and}$\smallskip
\item \label{enum:a4ii}$\inf\nolimits_{d(x,y)=r}\frac{V(x)+V(y)}{\ob (r)^+}\ \longrightarrow\ 
\infty\qquad\mbox{as }r\to\infty $.
\end{enumerate} 
\end{enumerate}
In \eqref{enum:a4ii} we use the convention that the value of the fraction is $+\infty$ if $\ob (r)\le 0$.

\begin{thm}\label{thm:2}
Suppose that \ref{enum:a1}, \ref{enum:a2} and \ref{enum:a4} are satisfied, and let 
\begin{equation}\label{eq:15}
\rho (x,y)=f(d(x,y))  +  \frac{M}{2C}  (V(x)+V(y)) \1_{x\neq y} \,,
\end{equation} 
where $f\colon [0,\infty )\to [0,\infty )$ is the concave increasing function defined in
\eqref{eq:6000} below, and the constant $M\in\r_+$ is defined in 
\eqref{eq:6b}. Then for any $x,y\in S$,
\begin{equation}
\label{eq:16}
\E_{x,y}[\rho (X',Y')]\le(1-c)  \rho (x,y) \,,
\end{equation}
where $c$ is an explicit strictly positive constant defined in \eqref{eq:6f} below.
\end{thm}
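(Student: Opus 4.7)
The plan is to upgrade Theorem \ref{thm:1}: where \ref{enum:a3} forced $\ob$ to be asymptotically negative, assumption \ref{enum:a4} instead furnishes the Lyapunov function $V$, and the additive term $\frac{M}{2C}(V(x)+V(y))\mathbf{1}_{x\neq y}$ in \eqref{eq:15} is tailored to absorb the mismatch at large distances. I will combine a one-step bound coming from Theorem \ref{thm:1} applied to a truncated drift with the Lyapunov inequality in \ref{enum:a4}\ref{enum:a4i}.

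I first split
\[
\mathbb{E}_{x,y}[\rho(X',Y')] \;=\; \mathbb{E}_{x,y}[f(R')] \;+\; \tfrac{M}{2C}\,\mathbb{E}_{x,y}\bigl[(V(X')+V(Y'))\mathbf{1}_{X'\neq Y'}\bigr].
\]
Dropping the indicator in the second term and applying \ref{enum:a4}\ref{enum:a4i} to $pV(x)$ and $pV(y)$ separately yields
\[
\tfrac{M}{2C}\,\mathbb{E}_{x,y}\bigl[(V(X')+V(Y'))\mathbf{1}_{X'\neq Y'}\bigr] \;\le\; \tfrac{M}{2C}(1-\lambda)(V(x)+V(y)) + M.
\]
For the first term I introduce a truncated drift bound $\tilde\beta$ that coincides with $\ob$ on $(0,R_1]$ for some $R_1>0$ and is affine-decreasing on $(R_1,\infty)$, so that $\tilde\beta$ satisfies \ref{enum:a3}. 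Applying the construction of Theorem \ref{thm:1} to $(\tilde\beta,\ua,\up)$ then yields the concave increasing $f$ of \eqref{eq:6000} together with a rate $c_1>0$; rerunning the one-step calculation with the actual $\ob$ in place of $\tilde\beta$ produces
\[
\mathbb{E}_{x,y}[f(R')] \;\le\; (1-c_1)f(r) + f'(r)\,\bigl(\ob(r)-\tilde\beta(r)\bigr)^+\mathbf{1}_{r>R_1},
\]
the extra term capturing exactly the defect of $\ob$ relative to $\tilde\beta$, and vanishing for $r\le R_1$.

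Combining these two bounds reduces \eqref{eq:16}, for $x\neq y$ with $r=d(x,y)$, to the pointwise inequality
\[
f'(r)\,\bigl(\ob(r)-\tilde\beta(r)\bigr)^+\mathbf{1}_{r>R_1} + M \;\le\; \tfrac{M}{2C}(\lambda-c)(V(x)+V(y)) + (c_1-c)f(r).
\]
On $\{r\le R_1\}$ the defect vanishes and the additive $M$ must be absorbed by a portion of the two contraction margins on the right, which essentially fixes the scale of $M$ as in \eqref{eq:6b}. On $\{r>R_1\}$ the defect is bounded by $f'(R_1)\ob(r)^+$, and condition \ref{enum:a4}\ref{enum:a4ii}, which says precisely that $(V(x)+V(y))/\ob(r)^+\to\infty$ as $r\to\infty$, lets me choose $R_1$ so large that this is dominated by $\tfrac{M}{2C}(\lambda-c)(V(x)+V(y))$. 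The main obstacle is the joint calibration of $R_1$, $M$, and the final rate $c\le\min(c_1,\lambda)$ of \eqref{eq:6f}: all three must be chosen simultaneously so that both regions are handled without sacrificing the concavity and monotonicity of the specific $f$ in \eqref{eq:6000}.
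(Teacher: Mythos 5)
Your high-level plan---replace \ref{enum:a3} by the Lyapunov assumption \ref{enum:a4} and let the additive $\frac{M}{2C}(V(x)+V(y))\1_{x\neq y}$ term pick up the slack at large distances---is the right idea, but the specific route you propose (truncate $\ob$ to a $\tilde\beta$ satisfying \ref{enum:a3}, invoke Theorem \ref{thm:1} verbatim, then track the defect) is \emph{not} what the paper does, and it has a genuine gap exactly where you wave your hands. The problem is the region $r_0<r<r_1$, where the Lyapunov term gives you nothing: there the set $\{V(x)+V(y)<\nicefrac{4C}{\lambda}\}$ is nonempty by the very definition \eqref{eq:6r1} of $r_1$, so your pointwise inequality collapses to requiring $M\le (c_1-c)f(r)$. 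But $c_1$ and the constant $a$ that enter $f$ came from Theorem~\ref{thm:1}'s construction applied to $(\tilde\beta,\ua,\up)$, which is \emph{blind to $M$}: the constant in \eqref{eq:5c} is $a\ge r_0+2\sup \nicefrac{\beta}{\pi}$, and the function $g$ in \eqref{eq:5g} has no $M$-dependence. Nothing in your scheme forces $(c_1-c)f(r)$ to dominate $M$ on that strip. Saying that this ``essentially fixes the scale of $M$ as in \eqref{eq:6b}'' is not a proof; in fact \eqref{eq:6b} is an \emph{upper} bound on $M$ that appears in the paper for a different purpose, namely to guarantee $g\ge\nicefrac12$.

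The paper's proof resolves this by making two modifications that you omit, and which together explain why the $f$ of \eqref{eq:6000} is \emph{not} the $f$ that Theorem~\ref{thm:1}'s construction would produce for your truncated drift. First, the constant $a$ is enlarged from \eqref{eq:5c} to \eqref{eq:6c}, i.e.\ $a\ge r_0+2\sup \nicefrac{(\beta+M)}{\pi}$, so that the term $-a\pi(x,y)$ in \eqref{eq:7*} directly absorbs $M$ for $r\le r_0$ (see \eqref{eq:74a}--\eqref{eq:74'}). Second, and more importantly, the function $g$ acquires an extra decreasing term $-M\int_0^{r\wedge r_1}\frac{1}{\varphi(s)}\sup_{\hat I_s}\ua^{-1}\,ds$ (compare \eqref{eq:6g} with \eqref{eq:5g}); it is precisely this modification that upgrades the one-step bound to $\E_{x,y}[f(R')-f(r)]\le -M\1_{r<r_1}-cf(r)$ in \eqref{eq:73a}, producing the extra $-M$ that cancels the $+M$ coming from the Lyapunov part via $pV\le(1-\lambda)V+C$. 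Without this extra slope in $g$ you simply do not have the $-M$ in hand for $r_0<r<r_1$. So your ``truncate $\ob$ and track the defect'' strategy is not merely a cosmetic repackaging---it loses the $M$-dependent slack that the paper's tailored $a$ and $g$ build into $f$, and the calibration you defer to ``joint choice of $R_1$, $M$, $c$'' cannot be carried out with the $f$ you have defined. (As a smaller point: your proposal never checks that the $f$ produced by Theorem~\ref{thm:1} on $(\tilde\beta,\ua,\up)$ is the $f$ in \eqref{eq:6000}; they differ, as just explained.)
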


The proof of the theorem is given in Section \ref{sec:proofs12} and explicit expressions for the function $f$ and the constants $M$ and $c$
in terms of $\ua ,\ob ,\up ,\epsilon ,V,C$ and
$\lambda$ are provided in Subsection \ref{subsec:choiceOfMetricThm2}.

The idea of adding a Lyapunov function to the metric appears for example
in \cite{HairerConvergenceMP} and has been further worked out in the diffusion case in \cite{EGZ}. Theorem \ref{thm:2} can be seen as a more quantitative
version of {Theorem 4.8 in \cite{HMS}, which is an extension of the classical Harris' Theorem. 
Note, however, that contractivity in our result is expressed in an additive metric $\rho$, as opposed to the multiplicative semimetric used in \cite{HMS}; see also \cite{EGZ} for a more detailed discussion on these two types of metrics. 
An application of Theorem \ref{thm:2} to the Euler scheme is given in Theorem \ref{thm:8b} below.

\subsection{Contractivity without positive coupling probability}
\label{sec:secondresult}

The assumption that there is a significant overlap between the measures $p(x,\wc )$ and $p(y,\wc )$ for $x$ close to $y$ is sometimes too restrictive. For example, it may cause a bad dimension dependence of the resulting bounds in high dimensional applications. Therefore,
we now state an alternative contraction result that applies even
when $\pi (x,y)=0$ for all $x$ and $y$.

For any $r\in (0,\infty )$ we consider an interval near $r$ given by
\begin{equation}
\label{eq:17}
I_r=(r-l(r),r+u(r))
\end{equation}
where $l(r),u(r)\ge 0$ and $l(r)\le r$. Similarly as in \eqref{eq:8} and \eqref{eq:9}, we define
\begin{align}
\label{eq:18a}
\beta (x,y) &= \E_{x,y}[\Delta R],\\
\label{eq:19a}
\alpha (x,y) &= \E_{x,y}\left[\lvert(\Delta R\wedge u(r))\vee (-l(r))\rvert^2\right] ,
\end{align}
where $r,R'$ and $\Delta R$ are defined by \eqref{eq:5}.
{In particular,}
\begin{equation}\label{eq:alphalb2}
\alpha (x,y)\ \ge \ \mathbb{E}_{x,y}[ (\Delta R)^2 \1_{\{ R' \in I_r  \} } ].
\end{equation}
In Subsection \ref{sec:firstresult}, we have chosen $l(r)=\epsilon$ and
$u(r)=0$, i.e., $I_r=(r-\epsilon ,r)$. Now, we will assume instead that 
there is a finite constant $r_0>0$ such that
\begin{equation}
\label{eq:18}
u(r)\ =\ 0\quad \mbox{for }r\ge r_0,\qquad \mbox{and}\qquad
u(r)=r_0\quad\mbox{for }r<r_0.
\end{equation}
As above, we assume that there exist functions
$\ob :(0,\infty )\to\r $ and $\ua :(0,\infty )\to (0,\infty )$ such that
for any $r>0$ and $x,y\in S$ with $d(x,y)=r$,
\begin{equation}\label{eq:19}
\beta (x,y)\ \le\ \ob (r)\quad\mbox{and}\quad \alpha (x,y)\ \ge \ \ua (r).
\end{equation}
We now impose the following conditions on $\ua$ and $\ob$:

\begin{enumerate}[label=(B\arabic*)]
\item\label{enum:b1} $\inf\limits_{r\in (0,s)}\frac{\ua (r)}{r}\ >\ 0$\quad for any $s\in (0,\infty )$,
\item\label{enum:b2} $\sup\limits_{r\in (0,s)}\frac{\ob (r)}{\ua (r)}\ <\ \infty$\quad for any $s\in (0,\infty )$,
\item\label{enum:b3} $\limsup_{r\to\infty}r^{-1}\ob (r)\ <\ 0$.
\end{enumerate}

Thus we no longer assume a positive coupling probability for $r<r_0$.
Instead, we require in \ref{enum:b1} and \ref{enum:b2} that $\ua (r)=\Omega (r)$ and
$\nicefrac{\ob (r)}{\ua} (r)=O(1)$ as $r\downarrow 0$. These assumptions can be verified for
example for Euler schemes if the coupling is constructed carefully. We will do this
in Section
\ref{sec:euler} for Euler discretizations of SDEs with contractive drifts, whereas for more general drifts we
will follow a slightly different approach. 

\begin{thm}\label{thm:3}
Suppose that \ref{enum:b1}, \ref{enum:b2} and \ref{enum:b3} are satisfied, and let 
\begin{equation}\label{eq:20}
\rho (x,y)=f(d(x,y)) \,,
\end{equation} 
where $f\colon [0,\infty )\to [0,\infty )$ is a continuous concave increasing function satisfying $f(0)=0$ which is defined explicitly in
\eqref{eq:8b} below. Then for any $x,y\in S$,
\begin{equation}
\label{eq:21}
\E_{x,y}[\rho (X',Y')]\le(1-c)  \rho (x,y) \,,
\end{equation}
where $c$ is an explicit strictly positive constant defined in \eqref{eq:8f} below.
\end{thm}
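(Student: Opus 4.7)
The plan is to adapt the Eberle-type construction used for diffusions to the discrete-time setting: seek $f$ in the form $f(r)=\int_0^r\varphi(s)\,ds$ where $\varphi:[0,\infty)\to(0,\infty)$ is continuous, non-increasing, and absolutely continuous, so that $f$ is automatically concave, increasing, and vanishes at $0$. The contraction will then come from balancing a first-order drift contribution (controlled by $\ob$) against a second-order fluctuation contribution (controlled by $\ua$), the latter being harvested precisely because $\varphi$ is required to decrease at a positive rate on the interval $I_r$.

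The core one-step estimate is derived from the identity
\[
f(R')-f(r)-\varphi(r)\,\Delta R \;=\; \int_r^{R'}\bigl(\varphi(s)-\varphi(r)\bigr)\,ds,
\]
which already yields $f(R')-f(r)\le \varphi(r)\,\Delta R$ almost surely by monotonicity of $\varphi$. If, in addition, $-\varphi'\ge K(r)$ holds a.e.\ on a neighbourhood of $I_r$, then splitting into the cases $R'<r$ and $R'>r$ and using $\varphi(s)-\varphi(r)\ge K(r)(r-s)$ respectively $\varphi(s)-\varphi(r)\le -K(r)(s-r)$ gives the sharper bound
\[
f(R')-f(r)-\varphi(r)\,\Delta R \;\le\; -\tfrac12\,K(r)\,(\Delta R)^2\,\mathbf{1}_{\{R'\in I_r\}}.
\]
Taking $\E_{x,y}$ and applying \eqref{eq:19} together with \eqref{eq:alphalb2} produces the master inequality
\[
\E_{x,y}[f(R')]-f(r) \;\le\; \varphi(r)\,\ob(r)\;-\;\tfrac12\,K(r)\,\ua(r),
\]
and the theorem reduces to designing $\varphi$ so that the right-hand side is bounded above by $-c\,f(r)$ for every $r>0$.

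I would construct $\varphi$ in two regimes. Using (B3), fix $R_1<\infty$ with $\ob(r)\le -\kappa r$ on $[R_1,\infty)$. On $[R_1,\infty)$ take $\varphi$ constant equal to $\varphi_1:=\varphi(R_1)>0$; then the second-order term vanishes and the required inequality reduces to $\varphi_1\,\ob(r)\le -c\,f(r)$, which holds with a strictly positive $c$ since $f(r)\le \varphi(0)\,r$ while $|\ob(r)|\gtrsim r$. On $[0,R_1]$, impose equality in the differential inequality
\[
-\varphi'(r)\;=\;\frac{2\bigl(\varphi(r)\,\ob(r)^+ + c\,f(r)\bigr)}{\ua(r)},\qquad \varphi(0)=1,
\]
with $K(r):=-\varphi'(r^+)$ (or an equivalent envelope over $I_r$). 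Assumption (B2) makes $\ob^+/\ua$ integrable on $(0,R_1]$, so the linear-in-$\varphi$ part contributes a bounded integrating factor; assumption (B1) guarantees $\ua(r)\gtrsim r$, which bounds the source term $cf/\ua$ uniformly near $r=0$ and thus keeps $\varphi$ strictly positive throughout $[0,R_1]$. The explicit solution yields the closed-form expressions \eqref{eq:8b} and \eqref{eq:8f}; the global constant $c$ is then the minimum of the two regime rates, matched by continuity at $R_1$.

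The main obstacle is the coupled nature of the small-$r$ construction: the ODE for $\varphi$ depends on $c$ through the $cf/\ua$ term, while the admissible $c$ depends on the resulting $\varphi$. I expect to resolve this by a monotone/fixed-point argument in $c$ (alternatively, by making the substitution $\varphi(r)=\psi(r)\exp(-\int_0^r 2\ob^+/\ua)$ and choosing $c$ as the largest value for which the transformed source term still admits a positive non-increasing $\psi$), and then to select $c>0$ quantitatively in terms of $\ua$, $\ob$, $r_0$, and $R_1$ only. Once $\varphi$ is in hand, the estimate $\E_{x,y}[f(R')]\le(1-c)f(r)$ follows directly from the master inequality above together with $f(r)>0$ for $r>0$.
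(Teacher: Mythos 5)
Your overall strategy is the right one and is structurally the same as the paper's: take $f(r)=\int_0^r\varphi(s)\,ds$ with $\varphi$ positive and decreasing, extract the drift contribution linearly via $\varphi(r)\E[\Delta R]\le\varphi(r)\ob(r)$, extract the fluctuation contribution from the curvature of $f$, and tune the decay rate of $\varphi$ so that both contributions are dominated by $-cf(r)$. The paper writes $\varphi_{\text{yours}}=\varphi g$ (with $\varphi$ absorbing the drift via $\tilde\gamma$ and $g$ absorbing the $-cf$ term, with $\Phi=\int\varphi$ replacing $f$), which linearizes the problem and sidesteps the implicit dependence of $\varphi$ on $c$ that you flag at the end; your one-function ODE plus maximization in $c$ is a legitimate alternative to that device. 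However, two technical steps, as written, do not go through under the hypotheses of the theorem.

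\textbf{The fluctuation bound uses the wrong truncation.} Your Taylor-type estimate gives
$f(R')-f(r)-\varphi(r)\,\Delta R \le -\tfrac12\,K(r)\,(\Delta R)^2\,\mathbf 1_{\{R'\in I_r\}}$,
and you then invoke \eqref{eq:alphalb2} to pass from $\E_{x,y}[(\Delta R)^2\,\mathbf 1_{\{R'\in I_r\}}]$ to $\ua(r)$. But \eqref{eq:alphalb2} says $\alpha(x,y)\ge\E_{x,y}[(\Delta R)^2\,\mathbf 1_{\{R'\in I_r\}}]$, and \eqref{eq:19} says $\alpha(x,y)\ge\ua(r)$; both inequalities bound $\alpha$ from below, so they do not yield $\E_{x,y}[(\Delta R)^2\,\mathbf 1_{\{R'\in I_r\}}]\ge\ua(r)$. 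Concretely, if $\Delta R$ puts most of its mass outside $(-l(r),u(r))$, the quantity you extract can be arbitrarily small even though $\alpha$ is large. The fix is to keep the contributions when $R'$ overshoots $I_r$: for $R'>r+u(r)$ split $\int_r^{R'}(\varphi(s)-\varphi(r))\,ds = \int_r^{r+u(r)}+\int_{r+u(r)}^{R'}$, bound the first piece by $-\tfrac12K(r)u(r)^2$ and the second by $0$ via monotonicity of $\varphi$ (and symmetrically on the left). This upgrades the bound to $-\tfrac12K(r)\,\lvert(\Delta R\wedge u(r))\vee(-l(r))\rvert^2$, whose expectation is $\alpha(x,y)$, and then \eqref{eq:19} applies directly. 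This is exactly the estimate the paper carries out.

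\textbf{The local slope $K(r)$ must be an infimum over the right window, not the value at $r$.} For the Taylor bound you need $-\varphi'(s)\ge K(r)$ for all $s\in I_r$. You propose to close the construction by setting $K(r):=-\varphi'(r^+)$ while defining $\varphi$ through a pointwise ODE; but there is no reason $-\varphi'$ should be monotone in the relevant direction over $I_r$, especially for $r\ge r_0$ where $I_r=(r-l(r),r)$ lies to the left of $r$ and for $r<r_0$ where $I_r$ extends to the right of $r$. The required inequality is a condition relating $\varphi'$ at $s$ to data at $r$ for every $r$ with $s\in I_r$, i.e.\ over the dual set $\hat I_s=\{r:s\in I_r\}$; this is why the paper dominates $\ogamma$ and $\Phi/\ua$ by suprema over $\hat I_s$ when specifying $\tilde\gamma$, $g$ and $c$ (equations \eqref{eq:8**i}--\eqref{eq:8**iii}, \eqref{eq:8g}, \eqref{eq:8f}). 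Without this, your ODE solution need not satisfy the pointwise hypothesis under which your Taylor bound was derived. Note also the additional technical subtlety near $0$: for $s<2r_0$ the set $\hat I_s$ is not contained in $(s,\infty)$, which the paper handles by inflating $\tilde\gamma$ by a factor $4$ and imposing $\int_0^{2r_0}\tilde\gamma\le\log 2$; any single-$\varphi$ construction will have to account for the same phenomenon.

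Both gaps are repairable without changing your architecture, but they are precisely the points where a naive adaptation of the continuous-time Eberle construction breaks, and they are where the paper's dual-interval machinery does real work.
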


The proof is given in Section \ref{sec:proof3}. Notice that in contrast to Theorem \ref{thm:1} and Theorem \ref{thm:2}, the function $f$ 
in Theorem \ref{thm:3} does not have a jump at $0$, i.e., the 
Kantorovich metric $\mathcal W_\rho$ does not contain a total 
variation part. This corresponds to the fact that under Assumptions
\ref{enum:b1}, \ref{enum:b2} and \ref{enum:b3}, it can not be expected in general that the coupled 
Markov chains meet in finite time.

\subsection{Stability under perturbations}\label{sec:perturbations}

Contractions in Kantorovich distances can sometimes be carried over 
to small perturbations of a given Markov chain. For instance,
in Subsection \ref{sec:\MALA} we will deduce contractivity for the 
Metropolis adjusted Langevin algorithm from corresponding
properties of the Euler proposal chain. Suppose as above that
$((X',Y'),\P_{x,y})$ is a Markovian coupling of the transition 
probabilities $p(x,\wc )$ and $p(y, \wc )$. Moreover, let
$((\tilde X,\tilde Y),{\P}_{x,y})$ be a corresponding coupling
of $\tilde p(x,\wc )$ and $\tilde p(y,\wc )$ for another (perturbed)
Markov transition kernel $\tilde p$ on $S$. Here we assume that for
given $x,y\in S$, $(X',Y')$ and $(\tilde X,\tilde Y)$ are defined on a
common probability space. We start with a simple observation.
If there exists a metric $\rho$ on $S$ and a constant $c\in (0,\infty )$
such that for $x,y\in S$,
\begin{eqnarray}
\label{eq:22}
\E_{x,y}[\rho (X',Y')] &\le & (1-c)  \rho (x,y),\qquad\qquad\mbox{and}\\
\label{eq:23}
{\E}_{x,y}[\rho (\tilde X,\tilde Y)] &\le &\E_{x,y}[\rho (X',Y')]  + 
\frac c2   \rho (x,y),\qquad\text{then}\\
\label{eq:24}
{\E}_{x,y}[\rho (\tilde X,\tilde Y)] &\le & \left(1-c/2\right)  \rho (x,y).
\end{eqnarray}
In applications it is often difficult or even impossible to verify Condition \eqref{eq:23} for $x$ very close to $y$. If $\P_{x,y}
[\tilde X=\tilde Y]>0$ for $x$ close to $y$, then this condition can be
relaxed.

\begin{thm}\label{thm:4}
Suppose that $\rho (x,y)=f(d(x,y))$ for a concave increasing contraction $f\colon [0,\infty )\to [0,\infty )$ satisfying $f(0)=0$. Suppose
that there exist constants $c,b,p,r_0\in [0,\infty )$ such that for
all $x,y\in S$,
\begin{align}
\label{eq:27}
\E_{x,y}[\rho (X',Y')] &\le  (1-c)  \rho (x,y),\\
\label{eq:28}
{\E}_{x,y}\left[ ( d(\tilde X,\tilde Y)-d(X',Y'))^+\right] &\le b  +  \frac c2  \rho (x,y)  ,\qquad\mbox{and}\\
\label{eq:29}
\P_{x,y}[\tilde X=\tilde Y] &\ge p\qquad\mbox{if }d(x,y)<r_0.
\end{align}
Assume that $p>0$, $b\le \nicefrac{cf(r_0)}{4}$, and let $\tilde\rho$ be the
metric defined by 
\begin{equation}
\label{eq:26}
\tilde\rho (x,y)=\rho (x,y)  + \frac{2b}{p}  \1_{x\neq y} . 
\end{equation}
Then
\begin{equation}
\label{eq:30}
{\E}_{x,y}\left[\tilde\rho (\tilde X,\tilde Y)\right] \ \le\  \left(1-\frac 18\min (c,2p)\right)  \tilde\rho (x,y)\qquad\mbox{for all }x,y\in S.
\end{equation}
\end{thm}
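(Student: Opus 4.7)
The plan is to exploit the $1$-Lipschitz property of $f$ (which is what ``contraction'' means here) to compare $\rho(\tilde X,\tilde Y)$ with $\rho(X',Y')$, and then to split into the close and far regimes separated by $r_0$. Since $f$ is increasing and $1$-Lipschitz with $f(0)=0$, one has $f(a)-f(b)\le (a-b)^+$ for all $a,b\ge 0$. Applied pointwise with $a=d(\tilde X,\tilde Y)$ and $b=d(X',Y')$, this gives
\begin{equation*}
\rho(\tilde X,\tilde Y)\ \le\ \rho(X',Y')+\bigl(d(\tilde X,\tilde Y)-d(X',Y')\bigr)^+,
\end{equation*}
and combining \eqref{eq:27} with \eqref{eq:28} after taking expectations under $\P_{x,y}$ yields the baseline estimate ${\E}_{x,y}[\rho(\tilde X,\tilde Y)]\le (1-c/2)\,\rho(x,y)+b$. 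This is the only step where the structure $\rho=f\circ d$ is used in a nontrivial way; everything that follows is a case analysis.

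Adding the total-variation correction gives ${\E}_{x,y}[\tilde\rho(\tilde X,\tilde Y)]\le (1-c/2)\rho(x,y)+b+(2b/p)\,\P_{x,y}[\tilde X\ne\tilde Y]$, and I then split on $d(x,y)$. In the \emph{far} regime $d(x,y)\ge r_0$, I discard \eqref{eq:29} and use $\P_{x,y}[\tilde X\ne\tilde Y]\le 1$; the additive error $b+2b/p$ is then absorbed into the contraction of the large quantity $\rho(x,y)\ge f(r_0)$ via the hypothesis $b\le cf(r_0)/4$. In the \emph{close} regime $d(x,y)<r_0$, I use \eqref{eq:29} to reduce the coefficient of $2b/p$ to $1-p$, so the total additive error becomes $b+(1-p)(2b/p)=2b/p-b$, i.e.\ the coupling probability buys back more than what was lost to \eqref{eq:28}. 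Setting $\kappa:=\min(c,2p)/8$, in each regime the required inequality ${\E}_{x,y}[\tilde\rho(\tilde X,\tilde Y)]\le (1-\kappa)\tilde\rho(x,y)$ reduces to an elementary scalar check: $(c/2-\kappa)f(r_0)\ge b(1+2\kappa/p)$ in the far regime, and $2\kappa\le p$ in the close regime (after discarding the nonnegative $(c/2-\kappa)\rho(x,y)$ term).

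The main (largely bookkeeping) obstacle is calibrating constants so that a single rate $\kappa$ satisfies both scalar inequalities simultaneously. The close-regime condition $\kappa\le p/2$ is immediate from $\kappa\le p/4$. The far-regime condition is tight: using $b\le cf(r_0)/4$ it reduces to $c/4\ge \kappa(1+c/(2p))$, and splitting into the two subcases $c\le 2p$ (so $\kappa=c/8$) and $c>2p$ (so $\kappa=p/4$) one checks by direct substitution that each becomes an equality precisely at $c=2p$. This is what dictates the factor $1/8$ in the contraction rate and the choice $2b/p$ for the total-variation weight in \eqref{eq:26}: there is essentially no slack in either one, but no further ingredient is needed beyond the $1$-Lipschitz comparison and this two-regime bookkeeping.
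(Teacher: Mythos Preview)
Your proof is correct and follows essentially the same approach as the paper: use the $1$-Lipschitz property of $f$ to pass from $\rho(\tilde X,\tilde Y)$ to $\rho(X',Y')$, combine \eqref{eq:27} and \eqref{eq:28} to get $\E_{x,y}[\rho(\tilde X,\tilde Y)]\le (1-c/2)\rho(x,y)+b$, and then split into the regimes $d(x,y)<r_0$ and $d(x,y)\ge r_0$. The only organizational differences are that the paper works with $-\tfrac{2b}{p}\P_{x,y}[\tilde X=\tilde Y]$ rather than $\tfrac{2b}{p}\P_{x,y}[\tilde X\ne\tilde Y]$, and in the far regime bounds $\tilde\rho(x,y)\le\max(2,c/p)\,\rho(x,y)$ directly instead of reducing to your scalar inequality $c/4\ge\kappa(1+c/(2p))$; these are equivalent reformulations of the same computation.
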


The proof is given in Section \ref{sec:proof4}. In Section \ref{sec:proofof\MALA} we will apply Theorem \ref{thm:4} to our results for the Euler scheme in order to obtain contractivity for the Metropolis adjusted Langevin algorithm (MALA).

{Note that Theorem \ref{thm:4} is related to the perturbation results in  \cite{PS,RS,JM}. In all these papers, a Kantorovich contraction in some metric is assumed for the initially given unperturbed Markov chain. Then, in \cite{PS}, the authors obtain bounds on the distance to equilibrium of a perturbed Markov chain in the same Kantorovich metric. In \cite{RS, JM}, the metric also remains unchanged, but the object of interest is a bound on the distance between a perturbed and the unperturbed chain. A related result in continuous time, giving bounds on the distance between invariant measures of a perturbed and an unperturbed diffusion, has been obtained in \cite{HugginsZou}. In contrast to these results, we consider a perturbed metric in Theorem \ref{thm:4}, but we obtain a stronger result showing that the perturbed Markov chain is again contractive w.r.t.\ the modified metric.}

\subsection{Application to Euler schemes}\label{sec:euler}

We now show how to apply the general methods developed above to Euler 
discretizations of stochastic differential equations of the form
\begin{equation}
\label{eqSDE}
\dd X_t=b(X_t)  \dd t  +  \dd B_t \,,
\end{equation} 
where $(B_t)_{t\ge 0}$ is a Brownian motion in $\r^d$, and $b:\r^d\to\r^d$ is
a Lipschitz continuous vector field. 
Quantifying contraction rates for Euler discretizations is important in connection with
the derivation of error bounds for the unadjusted Langevin algorithm (ULA), cf.\ \cite{Dalalyan,DurmusMoulinesULA0,DurmusMoulinesULA1,DurmusMoulinesULA2,DalalyanKaragulyan}
for corresponding results. Such applications of the techniques presented below will be discussed in detail in the upcoming paper \cite{MajkaMijatovicSzpruch} by the second author.
The transitions of the Markov chain for the Euler scheme with step size $h>0$ are given by
\begin{equation}
\label{eqES1}
x\mapsto\hat x  + \sqrt h  Z,\quad \text{where}\quad \hat x\coloneqq x  +  hb(x)\quad\text{and}\quad
Z\sim N(0,I_d).
\end{equation}
The corresponding transition probabilities are given by
\begin{equation}
\label{eqES1a}
p(x,\wc)\ =\ N(\hat x,hI_d)\qquad\mbox{for any }x\in\r^d,
\end{equation}
i.e., the transition density from $x$ is
\begin{equation}
\label{eqES1b}
p(x,x')\ =\ \phi_{\hat x,hI_d}(x')\ =\ (2\pi h)^{-\nicefrac{d}{2}}
\exp \left( -\frac{1}{2h}|x'-\hat x|^2\right) .
\end{equation}
In the case of $b\equiv 0$, the Markov chain is a Gaussian random walk
with transitions $x\mapsto x+\sqrt hZ$.

\subsubsection*{The coupling} 
For $x,y\in\r^d$ let
\begin{equation}
\label{equnit}
\hat e\ =\ \frac{\hat x-\hat y}{|\hat x-\hat y|}\qquad\mbox{if }\hat x\neq
\hat y,\qquad \hat e=
 0 \quad\text{otherwise. }
\end{equation}
We consider the coupling of two transitions of the
Euler chain from $x$ and $y$ respectively given by
\begin{equation}
\label{eq:0c}
X' \ =\ \hat x  + \sqrt hZ,\qquad  Y'\ =\
\begin{cases}
X' &\mbox{if }U\le\nicefrac{\phi_{\hat y,hI}(X')}{\phi_{\hat x,hI}(X')},\\
Y_{\rm refl}^\prime &\mbox{otherwise,}
\end{cases}
\end{equation}
where $Z\sim N(0,I_d)$ and $U\sim \Unif  (0,1)$ are independent
random variables, and
\begin{equation}
\label{eqRC}
Y_{\rm refl}^\prime\ =\ \hat y  +  \sqrt h  (I_d-2\hat e\hat e^T)Z
\end{equation}
is obtained by adding to $\hat y$ the increment $\sqrt hZ$ added
to $\hat x$, reflected at the hyperplane between $\hat x$ and
$\hat y$. \smallskip

\begin{figure}
	\centering
	\begin{tikzpicture}[->,shorten >=1pt,thick]
	\begin{axis}[domain=0:10, samples=100,
	axis lines*=left, xlabel={}, ylabel={},
	every axis y label/.style={at=(current axis.above origin),anchor=south},
	every axis x label/.style={at=(current axis.right of origin),anchor=west},
	height=5cm, width=12cm,
	xtick={4,6.5}, ytick=\empty,
	xticklabels={{$\hat{x}$},{$\hat{y}$}},
	enlargelimits=false, clip=false, axis on top,
	grid = major
	]
	\draw[] (400,0) circle [radius=2pt]; 
	\draw[] (650,0) circle [radius=2pt]; 
	\draw[dashed] (480,0) -- (480,286);
	\draw[fill=black] (480,0) circle [radius=2pt]; 
	\node[below] at (480,0) {$X'$};
	\draw[fill=black] (570,0) circle [radius=2pt]; 
	\node[below] at (570,0) {$Y_{\rm refl}^\prime$};
	\draw[fill=black] (480,93) circle [radius=2pt]; 
	\node[left] at (0,93) {$\phi_{\hat{y},hI}(X')$};
	\draw[fill=black] (480,286) circle [radius=2pt]; 
	\node[left] at (0,286) {$\phi_{\hat{x},hI}(X')$};
	\addplot [very thick,black, pattern=north west lines] {gauss(4,1)};
	\addplot [very thick,black, pattern=north east lines] {gauss(6.5,1)};
	\draw[dashed] (0,286) -- (480,286);
	\draw[dashed] (0,93) -- (480,93);
	\coordinate (a) at (3.8,.1);
	\coordinate (b) at (6.7,.1);
	\coordinate (c) at (5.25,.07);
	\end{axis}
\end{tikzpicture}
\caption{Construction of the coupling of $p(x,\cdot )$ and $p(y,\cdot )$: Given 
	the value of $X'$, we set $Y'=X'$ with the maximal probability $\min (1,\nicefrac{p(y,X')}{p(x,X')})$, and $Y'=Y_{\rm refl}^\prime$ otherwise.}
\label{fig:coupling}
\end{figure}
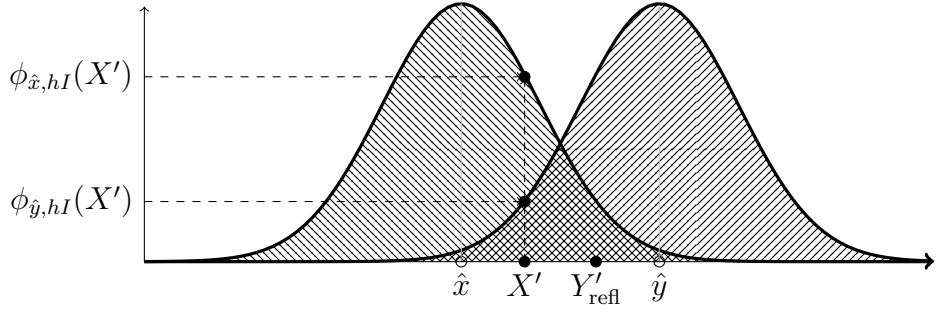
Both $(X',Y_{\rm refl}^\prime )$ and $(X',Y')$ are couplings of the probability measures $p(x,\wc)$ and $p(y,\wc )$.
For the coupling \eqref{eq:0c}, $Y'=X'$ with the maximal probability
$\min (1,\nicefrac{p(y,X')}{p(x,X')})$. Furthermore, in the case where $Y'\neq
X'$, the coupling coincides with the reflection coupling, i.e.,
$Y'=Y_{\rm refl}^\prime$. The resulting combination of reflection
coupling and maximal coupling is an optimal coupling of the Gaussian measures
$p(x,\wc )$ and $p(y,\wc )$ w.r.t.\ any Kantorovich distance
based on a metric $\rho (x,y)=f(\lvert x-y\rvert)$ with $f$ concave, cf.\
\cite{McCann} for the one-dimensional case.
We will not use the optimality here, but it shows that \eqref{eq:0c} is an appropriate coupling to consider if we
are interested in contraction properties for single transition steps of the 
Markov chain.

\begin{remark}[Relation to reflection coupling of diffusion processes]
A reflection coupling of two copies of a diffusion process satisfying a stochastic differential equation of the form \eqref{eqSDE} is given by
\begin{equation}\label{eq:diffusionCoupling}
\begin{split}
\dd X_t &= b(X_t)  \dd t  +  \dd B_t,\\
dY_t &= b(Y_t)  \dd t  +  (I-2e_te_t^T)  \dd B_t\quad\mbox{for }t<T,
\quad X_t=Y_t\quad\mbox{for }t\ge T,
\end{split}
\end{equation}
where $e_t=\nicefrac{(X_t-Y_t)}{\lvert X_t-Y_t\rvert}$ and $T=\inf\{ t\ge 0:X_t=Y_t\}$ is the coupling time. Hence the noise increment is reflected up to the
coupling time, whereas after time $T$, $X_t$ and $Y_t$ move 
synchronously. Our coupling in discrete time has a similar effect. If
$\hat x$ and $\hat y$ are far apart then the transition densities
$\phi_{\hat x,hI} $ and $\phi_{\hat y,hI}$ have little overlap,
and hence reflection coupling is applied with very high probability.
If, on the other hand, $\hat x$ and $\hat y$ are sufficiently close, then with a non-negligible probability, $X'=Y'$. Once both Markov
chains have reached the same position, they stick together since their transition densities coincide subsequently. In this sense, the coupling \eqref{eq:0c} is a natural discretization
of reflection coupling. Indeed, we would expect that as $h\downarrow 0$, the coupled Markov chains with
time rescaled by a factor $h$ converge in law to the reflection coupling \eqref{eq:diffusionCoupling} of the diffusion processes. {On the other hand, a coupling of Markov chains in which jumps are always reflected (i.e., a coupling without the positive probability of jumping to the same point) would converge as $h\downarrow 0$ to a reflection coupling of diffusions in which the coupled processes do not follow the same path after the coupling time.}
\end{remark}

We assume that under the probability measure $\P_{x,y}$, $(X',Y')$ is the coupling of $p(x,\wc)$
and $p(y,\wc)$ introduced above. We set
\begin{equation}
\label{eqchoicer0}
r_0 \coloneqq \sqrt h ,
\end{equation}
and we consider the intervals
\begin{equation}
\label{eq:int}
I_r\ =\ \begin{cases}
(0,r+\sqrt h )&\mbox{for }r<r_0,\\
(r-\sqrt h, r) &\mbox{for }r\ge r_0.\end{cases}
\end{equation}
Thus in the notation from Section \ref{sec:secondresult}, we set
\begin{equation}
\label{equl}
u(r)\coloneqq \sqrt h  \1_{r<r_0},\qquad l(r) \coloneqq \sqrt h  \1_{r\ge r_0}.
\end{equation}
For given $x,y\in\r^d$ let $
r = \lvert x-y\rvert$, $ \hat r = \lvert \hat x-\hat y\rvert$, $R'\ =\ \lvert X'-Y'\rvert$,
\begin{eqnarray}
\label{eqhatbeta}
\hat\beta (x,y) &=& \E_{x,y}[R'-\hat r],\\
\label{eqhatalpha}
\hat\alpha (x,y) &=& \E_{x,y}\left[ \lvert((R'-\hat r)\wedge u(\hat r))\vee (-l(\hat r))\rvert^2\right],\qquad\mbox{and}\\
\label{eqhatpi}
\pi (x,y) &=& \P_{x,y}[R'=0].
\end{eqnarray}
In particular,
\begin{equation}\label{eqhatalpha2}
\hat\alpha (x,y) \ \ge\ \mathbb{E}_{x,y}[(R' - \hat{r})^2 \1_{\{ R' \in I_{\hat{r}} \}}] \,.
\end{equation}
Notice that the definitions of $\hat\beta$ and $\hat\alpha$ differ from those of $\beta$ and $\alpha$ given in \eqref{eq:18a} and \eqref{eq:19a},
since $\hat\beta$ and $\hat\alpha$ take into account only the coupled 
random walk transition step from $(\hat x,\hat y)$ to $(X',Y')$, but not 
the deterministic transition from $(x,y)$ to $(\hat x,\hat y)$. We also
consider 
\begin{eqnarray}
\label{eq:beta}
\beta (x,y)& =& \E_{x,y}[R'-r]\ =\ \hat\beta (x,y)+\hat r-r,\qquad\text{and}\\
\alpha (x,y)& =& \E_{x,y}\left[ \lvert((R'-r)\wedge u( r))\vee (-l( r))\rvert^2\right] .
\end{eqnarray}

\subsubsection*{Assumptions}
In our main result for the Euler scheme we assume that there exist 
constants $J\in [0,\infty )$ and $K,L,\mathcal R\in (0,\infty )$
such that the following conditions hold:
\begin{enumerate}[label=(C\arabic*)]
\item\label{enum:c1} {\em One-sided Lipschitz condition:}
\[
(x-y)\cdot (b(x)-b(y))\le J  \lvert x-y\rvert^2\qquad \mbox{for any }x,y\in\r^d.
\]
\item\label{enum:c2} {\em Strict contractivity outside a ball:}
\[
(x-y)\cdot (b(x)-b(y))\le -K  \lvert x-y\rvert^2\qquad \mbox{if }\lvert x-y\rvert\ge\mathcal R.
\]
\item\label{enum:c3} {\em Global Lipschitz condition:}
\[
\lvert b(x)-b(y)\rvert\le L  \lvert x-y\rvert\qquad \mbox{for any }x,y\in\r^d.
\]
\end{enumerate}
Notice that by \ref{enum:c2} and \ref{enum:c3}, $L\ge K$. Of course, \ref{enum:c3} implies \ref{enum:c1} with $J=L$. Note, however, that we can often choose $J$ much smaller than $L$, e.g., we can even
choose $J=0$ if $b=-\nabla U$ for a convex function $U\in C^2(\r^d)$.
The global Lipschitz condition is required for the stability of the Euler
scheme, but the constant $L$ will affect our lower bound for the contraction rate only in
a marginal way. On the other hand, our bound on the contraction rate will depend in an essential way on the one-sided Lipschitz constant
$J$.\medskip

  The bounds provided in the next lemma are crucial to apply the techniques developed above to the Euler scheme.

\begin{lemma}\label{lem:6}
Let $x,y\in\r^d$, and let $r=\lvert x-y\rvert$ and $\hat r=\lvert\hat x-\hat y\rvert$. Then
\begin{enumerate}
\item \label{enum:lem6i} $\hat\beta (x,y)= 0$,
\item \label{enum:lem6ii} $\hat\alpha (x,y)\ge c_0 \min (\hat r,\sqrt h)  \sqrt h,$
\qquad{and}
\item \label{enum:lem6iii} $\pi (x,y)\ge p_0  \1_{\hat r\le 2 \sqrt h}$,
\end{enumerate}
where $c_0,p_0\in (0,1 )$ are explicit universal constants 
($c_0\ge 0.007$, $p_0\ge 0.15$). Furthermore, if the assumptions
\ref{enum:c1}, \ref{enum:c2} and \ref{enum:c3} hold true, then
\begin{enumerate}[resume]
\item\label{enum:lem6iv} $\beta (x,y)\ \le\ \min (L,J+\nicefrac{L^2h}{2})  hr$,
\item\label{enum:lem6v} $\beta (x,y)\ \le\ -(K-\nicefrac{L^2h}{2})  hr$\qquad if $r\ge\mathcal R$,
\item\label{enum:lem6vi} $\alpha (x,y)\ \ge\ \tilde c_0h\1_{ r\ge \sqrt h}$\qquad if $ r\le
1/(4L\sqrt h)$,\qquad and
\item\label{enum:lem6vii} $\pi (x,y)\ \ge\ p_0 \1_{ r\le \sqrt h}$\qquad if $h\le 1/L$.
\end{enumerate}
Here $\tilde c_0$ is an explicit universal constant 
($\tilde c_0\ge 0.0005$).
\end{lemma}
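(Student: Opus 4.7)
My plan is to reduce the entire analysis to a one-dimensional problem by projecting the noise $Z$ onto $\hat e$. Setting $W := \hat e\cdot Z \sim N(0,1)$ and $\hat s := \hat r/\sqrt h$, a direct computation shows that the acceptance ratio $\phi_{\hat y, hI}(X')/\phi_{\hat x, hI}(X') = \exp(-\hat s W - \hat s^2/2)$ depends on $Z$ only through $W$, and that on the reflection event $R' = |\hat r + 2\sqrt h W|$. Moreover, reflection has positive conditional probability only on $\{W\ge -\hat s/2\}$ (the set where the ratio is $\le 1$), and on that set $R' = \hat r + 2\sqrt h W\ge 0$.

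Parts \ref{enum:lem6i} and \ref{enum:lem6iii} then become explicit one-dimensional Gaussian computations. For \ref{enum:lem6i}, the $\{X'=Y'\}$ branch contributes $0$ to $\E[R']$, and on the reflection branch the change-of-measure identity $\varphi(w)\exp(-\hat s w - \hat s^2/2) = \varphi(w+\hat s)$, equivalently $\E[g(W)\exp(-\hat s W - \hat s^2/2)] = \E[g(W-\hat s)]$, applied to $g(w)=(\hat r + 2\sqrt h w)\1_{w\ge -\hat s/2}$ yields after cancellation
\[
\E[R'] - \hat r \;=\; \hat r\bigl(\P[W\ge -\hat s/2] + \P[W\ge \hat s/2] - 1\bigr) + 2\sqrt h\,\E\bigl[W\,\1_{-\hat s/2 \le W < \hat s/2}\bigr],
\]
which vanishes by symmetry of the standard Gaussian. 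For \ref{enum:lem6iii}, the coupling is maximal, so $\pi(x,y)$ equals the overlap of the one-dimensional Gaussians $N(0,h)$ and $N(\hat r, h)$, namely $2\Phi(-\hat s/2)$; for $\hat r\le 2\sqrt h$ this is at least $2\Phi(-1)\approx 0.317$. Parts \ref{enum:lem6iv}, \ref{enum:lem6v}, and \ref{enum:lem6vii} then follow: \ref{enum:lem6i} gives $\beta(x,y) = \hat r - r$, and the elementary estimates $\hat r\le r(1+hL)$ together with $\hat r^2\le r^2(1+2hJ+h^2L^2)$ (obtained by expanding $|\hat x - \hat y|^2$ and invoking \ref{enum:c1}, \ref{enum:c3}) and the inequality $\sqrt{1+u}\le 1+u/2$ give \ref{enum:lem6iv}; replacing $J$ by $-K$ via \ref{enum:c2} gives \ref{enum:lem6v}; and for \ref{enum:lem6vii} the assumption $hL\le 1$ forces $\hat r\le 2r$, so $r\le\sqrt h \Rightarrow \hat r\le 2\sqrt h$ and \ref{enum:lem6iii} applies.

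The bulk of the work sits in \ref{enum:lem6ii} and \ref{enum:lem6vi}. I would use the bound $\hat\alpha(x,y)\ge \E[(R'-\hat r)^2\1_{R'\in I_{\hat r}}]$ from \eqref{eqhatalpha2} together with the explicit identity $(R'-\hat r)^2 = 4hW^2$ on the reflection event. For \ref{enum:lem6ii} the analysis splits in two. When $\hat s < 1$, $I_{\hat r}=(0,\hat r+\sqrt h)$ corresponds to $W\in(-\hat s/2,1/2)$; restricting to $w\in(0,1/2)$ and bounding $1-\exp(-\hat s w - \hat s^2/2)\ge \hat s w$ extracts a factor of $\hat s$, yielding $\hat\alpha\gtrsim \hat r\sqrt h$. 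When $\hat s\ge 1$, $I_{\hat r}=(\hat r-\sqrt h,\hat r)$ corresponds to $W\in(-1/2,0)$; restricting further to a subinterval bounded away from $-\hat s/2$ (e.g.\ $[-\hat s/4,0]\cap[-1/2,0]$) makes $1-\exp(-\hat s w - \hat s^2/2)$ at least $1-e^{-1/4}$ uniformly and gives $\hat\alpha\gtrsim h$. Part \ref{enum:lem6vi} proceeds by the same scheme but compares $R'$ to $r$ instead of $\hat r$: the assumption $r\le 1/(4L\sqrt h)$ ensures $|\hat r - r|\le hLr\le \sqrt h/4$, so a bounded shift of the $W$-interval still yields $R'-r\in[-3\sqrt h/4,-\sqrt h/4]$, hence $(R'-r)^2\ge h/16$, and combining with a uniform lower bound on the reflection probability gives $\alpha\ge \tilde c_0 h$. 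The hard part will be pinning down the explicit constants $c_0\ge 0.007$ and $\tilde c_0\ge 0.0005$, since the reflection probability vanishes at $w=-\hat s/2$ and the subintervals must be chosen carefully enough to avoid this degeneracy while retaining nontrivial length and a uniform positive lower bound on $\varphi(w)$.
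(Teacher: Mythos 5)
Your reduction to one dimension via $W=\hat e\cdot Z$, $\hat s=\hat r/\sqrt h$, and the resulting formulas $R'=|\hat r+2\sqrt h\,W|$ on the reflection event and $\exp(-\hat s W-\hat s^2/2)$ for the acceptance ratio, are exactly the same normalisation the paper uses (after translation/rotation). The treatment of parts (iv), (v), (vii) matches the paper line for line. For (i) and (iii) you repackage the argument slightly: you derive $\hat\beta=0$ from the change-of-measure identity $\E[g(W)e^{-\hat s W-\hat s^2/2}]=\E[g(W-\hat s)]$, whereas the paper symmetrises the integrand directly under $t\mapsto r-t$; and you observe that $\pi(x,y)$ equals the full TV overlap $2\Phi(-\hat s/2)$, which is a cleaner (and slightly larger) bound than the paper's one-tail estimate $\P[X'\ge r/2]$. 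These are cosmetic differences; the content is identical.

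There are, however, two concrete imprecisions in the core parts (ii) and (vi) that you should fix when filling in constants. First, in the case $\hat s<1$ you invoke the bound $1-\exp(-\hat s w-\hat s^2/2)\ge\hat s w$ to ``extract a factor of $\hat s$''. As written this is false: $1-e^{-x}\le x$ for all $x\ge0$, not $\ge$. What is true, and what the paper uses, is the chord bound $1-e^{-x}\ge(1-e^{-1})\,x$ for $x\in[0,1]$, applicable here because $\hat s w+\hat s^2/2\le\tfrac12+\tfrac12=1$ on the region $w\in(0,1/2)$, $\hat s<1$. This still extracts the factor $\hat s$ (after dropping the $\hat s^2/2$ term) but introduces the $(1-e^{-1})$ that shows up in the paper's $c_0$. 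Second, in (vi) a genuinely uniform lower bound on the reflection probability over your chosen $W$-window cannot hold: since one only knows $\hat s=\hat r/\sqrt h\ge 3/4$ after shifting from $r$ to $\hat r$, the window can touch $-\hat s/2$ at its endpoint (e.g.\ $\hat s=3/4$, $W=-3/8$), where $1-e^{-\hat s W-\hat s^2/2}=0$. One should instead lower-bound the integral of $W^2(1-e^{-\hat s W-\hat s^2/2})\phi_{0,1}(W)$ over the window, which is still strictly positive because the integrand vanishes only at a single endpoint. Also note that the target range for $R'-r$ must be taken as $(-\sqrt h,-\sqrt h/4]$ (as the paper does, via the $W$-interval $(-3/8,-1/4]$), not $[-3\sqrt h/4,-\sqrt h/4]$: since $\hat r-r$ can be anywhere in $[-\sqrt h/4,\sqrt h/4]$, demanding $R'-r\ge -3\sqrt h/4$ uniformly shrinks your $W$-interval to a point. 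None of this changes the strategy, but both corrections are needed to make the numbers come out.
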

The proof of the lemma is contained in Section \ref{sec:proofsEuler}.

\subsubsection*{Contractive case}
At first, we consider the case where the deterministic part of the Euler
transition is a contraction, i.e.,
\begin{equation}
\label{eq:30*}
\hat r=\lvert\hat x-\hat y\rvert\le\lvert x-y\rvert=r\qquad\mbox{for any }x,y\in\r^d.
\end{equation}
In this simple case, we can prove a rather sharp result.
We choose a metric $\rho_a\colon \r^d\times\r^d\to\r_+$ of type
\begin{equation}
\label{eq:31}
\rho_a (x,y)\ =\ a\1_{x\neq y}\, +\, f_a(|x-y|),\qquad f_a(r)\ =\ \int_0^rg_a(s\wedge\mathcal R ) \dd s.
\end{equation}
Here $a$ is a non-negative constant, $\mathcal R$ is chosen as in Assumption \ref{enum:c2}, and $g_a\colon [0,\mathcal R]\to\r$ is an appropriately chosen decreasing function (see \eqref{eq:7**f} for $a=0$ and \eqref{eq:7**f2} for $a\neq 0$) satisfying
\begin{equation}
\label{eq:32}
g_a(0)=1 \qquad\mbox{and}\qquad g_a(s)\  \in\ [\nicefrac{1}{2},1]\quad
\mbox{for any }s\in [0,\mathcal R ].
\end{equation}
Hence $f_a$ is a concave increasing function satisfying $\nicefrac{r}{2}\le f_a(r)\le r$, and thus
\begin{equation}
\label{eq:33}
a\1_{x\neq y}+\nicefrac{\lvert x-y\rvert}{2}\le\rho_a (x,y)\le a\1_{x\neq y}+\lvert x-y\rvert\qquad\mbox{for any }x,y\in\r^d.
\end{equation}
In particular, the distance $\rho_0$ is equivalent to the Euclidean distance.

\begin{thm}[Euler scheme, contractive case]\label{thm:7}
Suppose that Conditions \ref{enum:c1}, \ref{enum:c2}, \ref{enum:c3} and \eqref{eq:30*} are
satisfied, and let $h_0=\frac 1L\min \left(\frac{K}{L},\frac 12\right)$. Suppose that $a=0$ or $a\ge \sqrt h$, and let 
$\rho_a$ be defined by \eqref{eq:31} with $g_a$ 
specified in \eqref{eq:7**f},  \eqref{eq:7**f2}, respectively. Let 
\begin{eqnarray}
\label{eq:c1h0}
c_1(0)& =& \frac{1}{4}\min  \left( K,\,   \frac{2c_0  }{\mathcal R^2+2\sqrt h\mathcal R+
12\, h}\right) \qquad\text{ and}\\
\label{eq:c1h0a}
c_1(a)& =& \frac{1}{4}\min  \left( \frac{K}{1+a/\mathcal R},\,   \frac{2c_0  }{\mathcal R^2+2(a+\sqrt h)\mathcal R},\, \frac{2p_0}{h}\right)\quad\text{for }a>0,
\end{eqnarray}
where $c_0$ is the explicit constant in Lemma \ref{lem:6}. Then
\eqref{eq:32} and \eqref{eq:33} hold, and if $h\in (0,h_0)$, then
\begin{equation}
\label{eq:34}
\E_{x,y}[\rho_a (X',Y')]\le\left(1-c_1(a)h\right) \rho_a (x,y)\quad
\mbox{for all }x,y\in\r^d.
\end{equation}
\end{thm}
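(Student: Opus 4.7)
The plan is to verify the one-step contraction \eqref{eq:34} directly, by exploiting the concavity of $f_a$ together with the coupling bounds from Lemma \ref{lem:6}. The two regimes $a=0$ and $a\ge\sqrt h$ are parallel, with an additional total-variation contribution tracked in the latter. The philosophy matches that of Theorems \ref{thm:1} and \ref{thm:3}, but the metric $\rho_a$ is built explicitly rather than extracted from the general constructions \eqref{eq:5000}, \eqref{eq:8b}.

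First I would establish a second-order Taylor estimate of the form
\[
f_a(R')-f_a(r)\ \le\ g_a(r\wedge\mathcal R)\,(R'-r)\ -\ \tfrac12\,\kappa_a(r)\,\bigl((R'-r)^-\wedge\sqrt h\bigr)^2,
\]
where $\kappa_a(r)>0$ captures the magnitude of $-g_a'$ on a window of width $\sqrt h$ around $r\wedge\mathcal R$. This is the discrete-time analogue of the generator-plus-carr\'e-du-champ decomposition used for diffusions in \cite{EberleCR,EGZ}. Taking $\E_{x,y}$ and invoking Lemma \ref{lem:6} yields
\[
\E_{x,y}[f_a(R')]-f_a(r)\ \le\ g_a(r\wedge\mathcal R)\,\beta(x,y)\ -\ \tfrac12\,\kappa_a(r)\,\alpha(x,y).
\]
Under the contractivity assumption \eqref{eq:30*} combined with Lemma \ref{lem:6}\ref{enum:lem6i}, $\beta(x,y)=\hat r-r\le 0$ holds globally, and Lemma \ref{lem:6}\ref{enum:lem6v} provides $\beta(x,y)\le -(K-L^2h/2)hr$ for $r\ge\mathcal R$. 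For $\sqrt h\le r\le 1/(4L\sqrt h)$, Lemma \ref{lem:6}\ref{enum:lem6vi} gives $\alpha(x,y)\ge\tilde c_0 h$, so the concavity term compensates for the lack of first-order drift on this middle range. The explicit formula \eqref{eq:7**f} for $g_a$ is engineered precisely to balance these two effects and produce the rate $c_1(0)h$; combining with the bracketing $r/2\le f_a(r)\le r$ from \eqref{eq:33} yields \eqref{eq:34} in the case $a=0$.

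For $a\ge\sqrt h$, I would split $\rho_a(X',Y')=f_a(R')+a\,\1_{X'\ne Y'}$ and treat the two summands separately. The $f_a$-piece is handled as above, with $g_a$ now taken from \eqref{eq:7**f2}. For the total-variation piece, Lemma \ref{lem:6}\ref{enum:lem6vii} gives $\pi(x,y)\ge p_0$ when $r\le\sqrt h$ and $h\le 1/L$, so $\E_{x,y}[a\,\1_{X'\ne Y'}]\le a(1-p_0)$ in that regime; for $r>\sqrt h$ one retains $\1_{X'\ne Y'}\le 1$ and absorbs the constant loss into $f_a(r)\ge r/2\ge \sqrt h/2$, using $a\ge\sqrt h$. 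Optimising the weights between the three competing mechanisms reproduces the three-way minimum in \eqref{eq:c1h0a}: the $K/(1+a/\mathcal R)$ term comes from the large-$r$ drift, the $2c_0/(\mathcal R^2+2(a+\sqrt h)\mathcal R)$ term from the small-$r$ concavity gain, and the $2p_0/h$ term from the TV decay.

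The main obstacle is the second-order Taylor bound: since $R'-\hat r$ is unbounded Gaussian, one cannot literally expand $f_a$ to second order without controlling the tails. The truncation at $\sqrt h$ built into $u,l$ in \eqref{equl} and the explicit choice of $g_a$ in \eqref{eq:7**f}--\eqref{eq:7**f2} must be tuned so that the second-order correction is genuinely useful precisely on the window where $\alpha$ delivers its lower bound via \eqref{eq:alphalb2} and Lemma \ref{lem:6}\ref{enum:lem6vi}. Once this bound is in place, the remainder is a case analysis on $r<\sqrt h$, $\sqrt h\le r<\mathcal R$, and $r\ge\mathcal R$, each drawing on a different ingredient of Lemma \ref{lem:6}, together with routine optimisation over the constants.
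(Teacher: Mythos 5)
Your overall strategy (concavity Taylor bound plus Lemma~\ref{lem:6} plus a three-way case split) is the right one and matches the spirit of the paper's argument, but you carry it out on the wrong centred quantity, and this leaves a genuine gap.

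Concretely, you Taylor-expand $f_a(R')-f_a(r)$ around $r$, so that the second-order fluctuation that enters after taking $\E_{x,y}$ is $\alpha(x,y)$ as in \eqref{eq:19a}, which you then lower-bound via Lemma~\ref{lem:6}\ref{enum:lem6vi}. There are two problems with this. First, Lemma~\ref{lem:6}\ref{enum:lem6vi} gives $\alpha(x,y)\ge\tilde c_0\,h\,\1_{r\ge\sqrt h}$ with the constant $\tilde c_0$ (roughly $0.0005$), whereas the rate $c_1(a)$ you are asked to prove is stated in terms of $c_0$ (roughly $0.007$) from Lemma~\ref{lem:6}\ref{enum:lem6ii}; the function $g_a$ you invoke from \eqref{eq:7**f} is itself built from $\ua(\hat r)=c_0\min(\hat r,\sqrt h)\sqrt h$, so plugging in the $\tilde c_0$-bound on $\alpha$ does not verify the inequality $g_a' \varphi \le -2c\,\Phi/\ua$ that \eqref{eq:7**f} is designed to satisfy. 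Second, and more seriously, for $a=0$ and $r<\sqrt h$ the right-hand side of Lemma~\ref{lem:6}\ref{enum:lem6vi} vanishes, so $\alpha(x,y)$ gives you nothing, the drift term $\beta(x,y)=\hat r-r\le 0$ is also not quantitative there, and (since $a=0$) there is no total-variation jump to absorb the loss. Your case analysis therefore has no working ingredient for $a=0$, $0<r<\sqrt h$, which is exactly where the metric $\rho_0$ must still contract at rate $c_1(0)h$.

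The paper's proof avoids both issues by a two-step reduction that your write-up does not make: since \eqref{eq:30*} guarantees $\hat r\le r$ and $f_a$ is increasing, one first writes
\begin{equation}
\E_{x,y}\bigl[\rho_a(X',Y')-\rho_a(x,y)\bigr]\le \E_{x,y}\bigl[\rho_a(X',Y')-\rho_a(\hat x,\hat y)\bigr],
\end{equation}
so that it suffices to analyse the pure coupled random-walk step from $(\hat x,\hat y)$ to $(X',Y')$. The second-order fluctuation is then $\hat\alpha$ centred at $\hat r$, for which Lemma~\ref{lem:6}\ref{enum:lem6ii} gives $\hat\alpha(x,y)\ge c_0\min(\hat r,\sqrt h)\sqrt h$ with the sharper constant $c_0$, with no upper restriction on $\hat r$, and — crucially — positive for all $\hat r>0$. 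Combined with $\hat\beta\equiv 0$ (Lemma~\ref{lem:6}\ref{enum:lem6i}), one can then literally instantiate the framework of Theorem~\ref{thm:3} (for $a=0$) or Theorem~\ref{thm:1} (for $a\ge\sqrt h$) with $\ob\equiv 0$, $\tilde\gamma\equiv 0$, $\varphi\equiv 1$, $\Phi(u)=u$, $\ua(u)=c_0\min(u,\sqrt h)\sqrt h$, and the stated $g_a$ and $c_1(a)$ drop out of the integral $\int_0^{\mathcal R}\sup_{\hat I_s}(\Phi/\ua)$. If you restructure your proof around this decomposition and replace your use of Lemma~\ref{lem:6}\ref{enum:lem6vi} by Lemma~\ref{lem:6}\ref{enum:lem6ii}, the remainder of your plan (including the handling of the TV piece for $a\ge\sqrt h$ via Lemma~\ref{lem:6}\ref{enum:lem6vii} and the large-$r$ drift via Lemma~\ref{lem:6}\ref{enum:lem6v}) goes through as you describe.
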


The proof, based on Theorem \ref{thm:1} for $a > 0$ and
Theorem \ref{thm:3} for $a = 0$, is given in Section \ref{sec:proofsEuler}. 

\begin{remark}[Dependence on parameters and dimension]\label{remark:thm1}
The lower bound for the contraction rate in
\eqref{eq:34} is of the correct order $\Omega (h\min (\mathcal R^{-2},K))$. This corresponds to the optimal contraction rate $\Theta (\min (\mathcal R^{-2},K))$ for the corresponding diffusion process, see \cite[Lemma 1 and Remark 5]{Eberle2015}. Note also that the lower bound for the contraction rate does not depend on the dimension $d$ provided the parameters $\mathcal R,K$ and $L$ can be chosen independent of $d$.
\end{remark}

\subsubsection*{General case}
We now turn to the general, not globally contractive case. 
Here it is no longer possible
to obtain contractivity w.r.t.\ a metric satisfying \eqref{eq:33}, but we can still choose a metric that is comparable to the Euclidean distance, and apply the theorems above. We illustrate this at first by applying Theorem \ref{thm:1}. Let
\begin{equation}
\label{Lambda} \Lambda \ =\  \min(L, J+L^2h/2).
\end{equation}
We now choose a metric $\rho_a\colon \r^d\times\r^d\to\r_+$ of type
\begin{equation}
\label{eq:31x}
\rho_a (x,y)\ =\ a\1_{x\neq y}\, +\, f_a(|x-y|),\qquad f_a(r)\ =\ \int_0^rg_a(s\wedge r_2 )\varphi (s\wedge r_2 ) \dd s.
\end{equation}
Here $a$ is a non-negative constant, 
\begin{equation}
\label{eq:31y}
\varphi (r)\ =\ \exp \left( -\tilde c_0^{-1}\Lambda \left( (r\wedge\mathcal R)^2+2\sqrt h\,  r\wedge \mathcal R\right)\right) 
\end{equation}
with $\mathcal R$ and $\tilde c_0$ chosen as in Assumption \ref{enum:c2} and Lemma \ref{lem:6}, respectively,
\begin{equation}
\label{eq:31ya}
r_2\ =\ \mathcal R +\sqrt{2\tilde c_0/K},
\end{equation}
and $g_a\colon [0,r_2 ]\to\r$ is an appropriately chosen decreasing function (see \eqref{eq:7**f2}) satisfying
\begin{equation}
\label{eq:32x}
g_a(0)=1 \qquad\mbox{and}\qquad g_a(s)\  \in\ [\nicefrac{1}{2},1]\quad
\mbox{for any }s\in [0,r_2 ].
\end{equation}

\begin{thm}[Euler scheme, general case I]\label{thm:8a}
Suppose that Conditions \ref{enum:c1}, \ref{enum:c2} and \ref{enum:c3} are satisfied, and let $h_0 = \frac{1}{L}\min (\frac{p_0}{2}, \frac{K}{L}, \frac{1}{64\, L  r_2^2})$ with $r_2$
specified in \eqref{eq:31ya}. Let $a \in [2\sqrt{h}, \Phi (\mathcal R)]$ where $\Phi (\mathcal R ):=\int_0^\mathcal R\varphi (r)\, dr$, let $\rho_a$ be defined by \eqref{eq:31x} with $\varphi$ and $g_a$ 
specified in \eqref{eq:31y} and \eqref{eq:7**f2}, respectively, and let 
\begin{equation}\label{eq:34x}
c_2(a)\ =\ \frac{1}{8}\min  \left( \frac{K\, \varphi (\mathcal R )}{1+(a+/\sqrt h)\sqrt{2K/\tilde c_0}},\,   \frac{2\, \tilde c_0\, \varphi (\mathcal R )  }{\mathcal R^2+2(a+\sqrt h)\mathcal R},\, \frac{4p_0}{h}\right) .
\end{equation}
Then \eqref{eq:32x} holds, and if $h \in (0, h_0)$, then 
\begin{equation*}
\E_{x,y}[\rho_a (X',Y')]\le\left(1-c_2(a)h\right) \rho_a (x,y)\quad
\mbox{for all }x,y\in\r^d.
\end{equation*}
\end{thm}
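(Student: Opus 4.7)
The plan is to invoke Theorem~\ref{thm:1} with the coupling of Section~\ref{sec:euler} and the one-step bounds of Lemma~\ref{lem:6}. Because $a\ge 2\sqrt h>0$, Theorem~\ref{thm:1} (rather than Theorem~\ref{thm:3}) is the right tool: it produces a metric with an atomic part $a\1_{x\ne y}$ together with a smooth concave part, which is exactly the shape of $\rho_a$ in \eqref{eq:31x}. The task is thus to specialise the three abstract inputs $\ob,\ua,\up$ to the Euler scheme, verify \ref{enum:a1}--\ref{enum:a3}, and then recognise the resulting rate from \eqref{eq:5f} as $c_2(a)h$.

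For the first step, I would take $\ob(r)=\Lambda h r\,\1_{r<\mathcal R}-(K-L^2h/2)\,hr\,\1_{r\ge\mathcal R}$, $\ua(r)=\tilde c_0 h\,\1_{r\ge\sqrt h}$, $\up(r)=p_0\,\1_{r\le\sqrt h}$, and set $r_0=\sqrt h$ in the sense of \ref{enum:a1}. Parts (iv)--(vii) of Lemma~\ref{lem:6} yield the inequalities in \eqref{eq:11} on the relevant ranges; the restriction $r\le 1/(4L\sqrt h)$ needed in (vi) covers all of $[\sqrt h,r_2]$ because $h\le 1/(64L^2 r_2^2)$ forces $r_2\le 1/(8L\sqrt h)$. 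Assumption \ref{enum:a1}(i) follows from $\up\equiv p_0$ on $(0,\sqrt h]$ and (ii) from $\ua\equiv\tilde c_0 h$ on $(\sqrt h,r_2]$; \ref{enum:a2} is clear; and \ref{enum:a3} holds because $r^{-1}\ob(r)\le -(K-L^2h/2)h<0$ for $r\ge\mathcal R$, which is ensured by $h\le h_0\le K/L^2$.

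Second, the concave function $f$ built by the construction behind Theorem~\ref{thm:1} (formula \eqref{eq:5000}) is an integrated decreasing density dictated by $\ob,\ua,\up$ and $a$. The positive drift $\Lambda hr$ on $[0,\mathcal R]$ forces the density to decay exponentially there, and this is the role of $\varphi$ in \eqref{eq:31y}; the prefactor $g_a$ from \eqref{eq:7**f2} handles the matching at $r=\mathcal R$ and the transition interval $[\mathcal R,r_2]$, on which the contractive tail of $\ob$ is now available and $r_2=\mathcal R+\sqrt{2\tilde c_0/K}$ is the natural saturation point. The bounds \eqref{eq:32x} then follow at once from the explicit $g_a$. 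Plugging these data into \eqref{eq:5f} produces three competing lower bounds on the contraction rate: a drift contribution of order $K\,\varphi(\mathcal R)/(1+(a+\sqrt h)\sqrt{2K/\tilde c_0})$ from $r\ge\mathcal R$, a fluctuation contribution of order $\tilde c_0\,\varphi(\mathcal R)/(\mathcal R^2+2(a+\sqrt h)\mathcal R)$ from the positive-drift region $r<\mathcal R$, and a coupling-probability contribution $p_0/h$ from $r\le\sqrt h$; taking the minimum and including the $1/8$ prefactor from Theorem~\ref{thm:1} reproduces $c_2(a)h$ as written in \eqref{eq:34x}.

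The main obstacle will be the bookkeeping in the second step: checking that the abstract concave envelope from \eqref{eq:5000} really coincides with the explicitly prescribed $f_a$ assembled from $g_a$ and $\varphi$, and that no loss is incurred when the raw rate expression \eqref{eq:5f} collapses to the clean three-term minimum in \eqref{eq:34x}. The new feature compared to Theorem~\ref{thm:7} (where the assumption \eqref{eq:30*} allowed $\Lambda$ to be effectively replaced by $0$ so no exponential weight was required) is precisely the weight $\varphi$: it enters every term through the factor $\varphi(\mathcal R)$ and enlarges the ``bad'' interval from $[0,\mathcal R]$ to $[0,r_2]$, in order to leave the contractive drift beyond $\mathcal R$ enough room to offset the fluctuation shortfall in the strip $[\mathcal R,r_2]$.
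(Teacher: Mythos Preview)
Your proposal is correct and follows essentially the same route as the paper: apply Theorem~\ref{thm:1} with $r_0=\varepsilon=\sqrt h$, read off $\ob,\ua,\up$ from Lemma~\ref{lem:6}(iv)--(vii), verify \ref{enum:a1}--\ref{enum:a3} (using $h_0\le K/L^2$ and $h_0\le 1/(64L^2r_2^2)$ exactly as you indicate), identify $\tilde\gamma$, $\varphi$, $r_1=\mathcal R$, and $r_2=\mathcal R+\sqrt{2\tilde c_0/K}$, and then bound the integral in \eqref{eq:5f}. Two small points worth flagging in your write-up: the upper bound $a\le\Phi(\mathcal R)$ is what makes the choice of $r_2$ in \eqref{eq:31ya} satisfy \eqref{eq:5d}, and the first two terms in \eqref{eq:34x} do not arise from separate $r$-regimes but both from bounding the single integral $\int_0^{r_2}\varphi(s)^{-1}\sup_{\hat I_s}(a+\Phi)/\ua\,ds$ via $r_2^2+2r_2(a+\sqrt h)\le 2\max\bigl(\mathcal R^2+2(a+\sqrt h)\mathcal R,\ 2\tilde c_0/K+2(a+\sqrt h)\sqrt{2\tilde c_0/K}\bigr)$.
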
	

Note that except for the
additional factor $\varphi (\mathcal R)=\exp ( -\tilde c_0^{-1}\Lambda (\mathcal R^2+2\sqrt h\mathcal R))$, the expression for the contraction rate $c_2(a)$
is similar to the one for the rate $c_1(a)$ in the contractive case. 
The proof based on Theorem \ref{thm:1} is given in Section \ref{sec:proofsEuler}. If the interval $[2\sqrt{h}, \Phi (\mathcal R)]$ is empty, the theorem can still be applied with $\mathcal R$ replaced by a slightly larger value. 
It is also possible to replace Condition \ref{enum:c2} by a Lyapunov condition and apply Theorem \ref{thm:2} instead of Theorem \ref{thm:1}. A corresponding result for the Euler scheme is
given in Section \ref{sec:proofsEuler}, cf.\ Theorem \ref{thm:8b}.

\begin{remark}[Dependence on parameters and dimension]\label{remark:thm2}
The lower bound for the contraction rate in
\eqref{eq:34x} does not depend on the dimension $d$ provided the parameters $\mathcal R,K$ and $\Lambda$ can be chosen independent of $d$. Moreover, by choosing $h$ sufficiently small, we can ensure that $\Lambda$ is close to the one-sided Lipschitz constant $J$. Hence the global Lipschitz constant $L$ is only required for controlling the step size $h$, whereas the contraction properties for sufficiently small $h$ can be controlled essentially by one-sided Lipschitz bounds. This is important since in many applications, only a one-sided Lipschitz condition is satisfied globally. In this case, our approach can still be
applied on a large ball if the step size is chosen sufficiently small depending on the radius of the ball and the growth of the local Lipschitz constant.
\end{remark}

The explicit expression for the metric in Theorem \ref{thm:8a} is a bit complicated. As an alternative, we can use 
a simplified metric without a discontinuity that is sufficient to derive bounds of similar order as for the 
metric used above, whenever condition \ref{enum:c2} is satisfied. 
We assume $hL\le 1/6$,
and we set 
\begin{equation}
\label{eq:35}
r_1\ :=\ (1+hL)\mathcal R\ \le\ \frac 76\mathcal R
\end{equation}
with $\mathcal R$ and $L$ as in Assumptions \ref{enum:c2} and \ref{enum:c3}. The choice of $r_1$ ensures that
\begin{equation}
\label{eq:36}
\hat r\ =\ |\hat x-\hat y|\ \le\ (1+hL)r\ \le\ r_1\qquad\text{whenever }r=|x-y|\le\mathcal R.
\end{equation}
Let $c_0$ denote the explicit constant in Lemma \ref{lem:6}, and let
\begin{equation}
\label{eq:37}
q\ =\ 7c_0^{-1}\Lambda\mathcal R.
\end{equation}
We now consider a simplified metric of the form
\begin{align}
\label{eq:39}
\rho (x,y)&=f(\lvert x-y\rvert),& f(r)&=\int_0^r\exp (-q(s\wedge r_1))
 \,d s.
\end{align}

\begin{thm}[Euler scheme, general case II]\label{thm:8}
Suppose that Conditions \ref{enum:c1}, \ref{enum:c2} and \ref{enum:c3} are
satisfied, and let $\rho$ be defined by \eqref{eq:39} with $q$
specified in \eqref{eq:37}. Let 
\begin{eqnarray}\label{eq:c2}
c_2& =&  \min  \left( \frac K2,\, \frac{245}{24 c_0} {\Lambda^2\mathcal R^2} \right)\, \exp\left({-\frac{49}{6 c_0}{\Lambda \mathcal R^2}}\right) \qquad \text{ and }\\ 
h_0&=&\frac 1L\min \left(\frac 16,\, \frac KL,\, \frac 13{L\mathcal R^2},\,\frac{c_0^2}{970}\frac 1{L\mathcal R^2}\right), \label{eq:h0general}
\end{eqnarray}
where $c_0$ is chosen as in Lemma \ref{lem:6}. Then 
\begin{equation}
\label{eq:45}
\E_{x,y}[\rho (X',Y')]\le(1-c_2h) \rho (x,y)\quad
\mbox{for any }x,y\in\r^d\mbox{ and }h\in (0,h_0].
\end{equation}
\end{thm}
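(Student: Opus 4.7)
My plan is to apply Theorem \ref{thm:3} with $a=0$ to the Euler transition kernel coupled as in \eqref{eq:0c}, using Lemma \ref{lem:6} to supply the drift and fluctuation estimates. Since the metric \eqref{eq:39} is simpler and more explicit than the one constructed in \eqref{eq:8b}, I would mimic the argument of Theorem \ref{thm:3} rather than invoke it as a black box, so that the sharper rate \eqref{eq:c2} drops out.

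First I would assemble the local estimates. By parts \eqref{enum:lem6iv}, \eqref{enum:lem6v} and \eqref{enum:lem6vi} of Lemma \ref{lem:6}, together with the step-size restriction \eqref{eq:h0general} (which guarantees $K-L^2h/2\ge K/2$ and $\mathcal R\le 1/(4L\sqrt h)$), one has, with $r=|x-y|$,
\[
\beta(x,y)\le \Lambda\, h\, r\text{ for } r<\mathcal R,\quad \beta(x,y)\le -\tfrac{K}{2}\, h\, r\text{ for } r\ge\mathcal R,\quad \alpha(x,y)\ge \tilde c_0\, h\text{ for } r\in[\sqrt h,\mathcal R].
\]
A comparable lower bound on $\alpha$ for $r<\sqrt h$ is obtained from the same Gaussian calculation underlying Lemma \ref{lem:6}\eqref{enum:lem6ii}, so hypothesis \ref{enum:b1} of Theorem \ref{thm:3} is met and the truncation interval \eqref{eq:int} matches the setup of Section \ref{sec:secondresult}.

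Next, since $f$ is concave and piecewise smooth with $f'(s)=\exp(-q(s\wedge r_1))$ and a single break at $r_1$, Taylor's theorem combined with \eqref{eq:alphalb2} yields the pointwise bound
\[
\E_{x,y}[f(R')]-f(r)\ \le\ f'(r)\,\beta(x,y)\ -\ \tfrac{q}{2}\,e^{-q(r\wedge r_1)}\,\alpha(x,y),
\]
modulo standard care at $r_1$. I would then split into cases. For $r\ge\mathcal R$, discard the concavity term and use the negative drift to get contraction at rate at least $\tfrac{K}{2}\,e^{-qr_1}\,h$, which produces the $K/2$ entry of \eqref{eq:c2}. For $\sqrt h\le r\le\mathcal R$, combine $\beta\le\Lambda h r$, $\alpha\ge\tilde c_0 h$ and $f'(r)=e^{-qr}$ to obtain $\E_{x,y}[f(R')]-f(r)\le e^{-qr}h\bigl(\Lambda r-\tfrac{q\tilde c_0}{2}\bigr)$; the choice $q=7c_0^{-1}\Lambda\mathcal R$ from \eqref{eq:37} is calibrated precisely so that, after using $f(r)\le r$ and $f(r)\ge e^{-qr_1}r$, the right-hand side is dominated by $-c_2 h f(r)$, with the exponential prefactor $e^{-qr_1}=\exp(-\tfrac{49}{6c_0}\Lambda\mathcal R^2)$ yielding the second candidate $\tfrac{245}{24c_0}\Lambda^2\mathcal R^2 e^{-qr_1}$ in \eqref{eq:c2}. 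For $r<\sqrt h$ one invokes the extended $\alpha$-bound, or equivalently uses monotonicity of $f$ to transfer the contraction obtained at $r\sim\sqrt h$.

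The main obstacle is the bookkeeping of constants through the three regimes and at the breakpoints $r=\mathcal R$ and $r=r_1$, so that the interplay between the exponential prefactor $\exp(-\tfrac{49}{6c_0}\Lambda\mathcal R^2)$ and the two candidates $K/2$ and $\tfrac{245}{24c_0}\Lambda^2\mathcal R^2$ in \eqref{eq:c2} comes out exactly as stated. The step-size restriction \eqref{eq:h0general} is engineered so that the $L^2h/2$ corrections in Lemma \ref{lem:6}\eqref{enum:lem6iv},\eqref{enum:lem6v} and the discrepancy $r_1-\mathcal R=hL\mathcal R$ can be absorbed without spoiling the leading-order estimates.
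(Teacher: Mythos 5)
The case decomposition ($r\ge\mathcal R$, $\sqrt h\le r<\mathcal R$, $r<\sqrt h$) mirrors the paper, but there is a genuine gap in the key middle regime: you apply Taylor's theorem at $r$ and use the quantities $\alpha(x,y)$, $\beta(x,y)$ from \eqref{eq:beta} together with Lemma~\ref{lem:6}\ref{enum:lem6vi}, which only gives $\alpha(x,y)\ge \tilde c_0 h$ with $\tilde c_0\ge 0.0005$. The metric \eqref{eq:39} is calibrated through $q=7c_0^{-1}\Lambda\mathcal R$ to the \emph{larger} constant $c_0\ge 0.007$ from Lemma~\ref{lem:6}\ref{enum:lem6ii}, not $\tilde c_0$. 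Following your estimate, for $\sqrt h\le r<\mathcal R$ you get
\begin{equation}
\E_{x,y}[f(R')]-f(r)\ \le\ e^{-qr}h\Bigl(\Lambda r-\tfrac{q\tilde c_0}{2}\Bigr)\ =\ e^{-qr}h\Bigl(\Lambda r-\tfrac{7\tilde c_0}{2c_0}\Lambda\mathcal R\Bigr),
\end{equation}
and since $7\tilde c_0/(2c_0)\approx 0.25<1$, the parenthesis is \emph{positive} for $r$ near $\mathcal R$. So your bound does not close with this $q$, and you cannot recover the constants $c_2$ and $h_0$ in \eqref{eq:c2}--\eqref{eq:h0general}, both of which involve $c_0$.

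The paper avoids this by decomposing the transition into the deterministic drift $(x,y)\mapsto(\hat x,\hat y)$ and the coupled random walk step $(\hat x,\hat y)\mapsto(X',Y')$, writing
\begin{equation}
\E_{x,y}[f(R')-f(r)]\ =\ \bigl(f(\hat r)-f(r)\bigr)+\E_{x,y}[f(R')-f(\hat r)],
\end{equation}
Taylor-expanding the second term at $\hat r$, and invoking Lemma~\ref{lem:6}\ref{enum:lem6i} ($\hat\beta=0$) and \ref{enum:lem6ii} ($\hat\alpha\ge c_0\min(\hat r,\sqrt h)\sqrt h$). This keeps the sharper constant $c_0$ for the fluctuation bound and isolates the drift in the concavity estimate $f(\hat r)-f(r)\le(\hat r-r)f'(r)$, which together with Lemma~\ref{lem:6}\ref{enum:lem6iv} yields the $\Lambda h r$ term. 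This is precisely the calibration that makes $q=7c_0^{-1}\Lambda\mathcal R$ work: one gets $3\Lambda\hat r-c_0q\le -c_0q/2<0$ for $\hat r<r_1$. A second, smaller issue: your claim of "a comparable lower bound on $\alpha$ for $r<\sqrt h$" is not in Lemma~\ref{lem:6} and would need a separate argument; the paper instead runs its sub-case analysis in terms of $\hat r\le\sqrt h$ where Lemma~\ref{lem:6}\ref{enum:lem6ii} gives $\hat\alpha\ge c_0\hat r\sqrt h$ directly, and the $f$-bounds $f'(\hat r)\le 1$, $f(\hat r)\le\hat r$ close the estimate.
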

The proof of the theorem is contained in Section \ref{sec:proofsEuler}. 
 
\begin{remark}\label{remark:thm3}
Again, the lower bound $c_2$ for the contraction rate only depends on
$\mathcal R$, $K$ and $\Lambda$. Furthermore, note that $r \exp (-qr_1) \leq f(r) \leq r$ for all $r \geq 0$, and hence the metric $\rho$ is comparable to the Euclidean distance. As a consequence, Theorem \ref{thm:8} implies weak contractivity in the standard $L^1$ Wasserstein distance. Note also that the function $f$ depends on the discretization parameter $h$ via $q$ and $r_1$. It is, however, possible to modify the definition of $f$ so that it no longer depends on $h$, at the cost of getting a worse constant $c_2$. We refer the interested reader to \cite{MajkaMijatovicSzpruch}, where similar bounds are used with a metric independent of $h$. 
\end{remark}

Theorem \ref{thm:8} can be extended to cover pseudo metrics based on functions that are strictly convex at infinity. This allows for obtaining upper $L^2$ bounds for Euler schemes under similar assumptions as above. Such bounds are applied to the analysis of Multi-level Monte Carlo algorithms in the upcoming paper \cite{MajkaMijatovicSzpruch}.

\subsection{Application to MALA}\label{sec:\MALA}

The Metropolis-adjusted Langevin Algorithm is a 
Metropolis-Hastings method for approximate sampling from a given
probability measure $\mu$ where the proposals are obtained by
an Euler discretization of an overdamped Langevin SDE. In
\cite{EberleAAP}, the dimension dependence of contraction rates
of MALA chains w.r.t.\ standard Kantorovich distances has been
studied for a class of strictly log-concave probability measures that
have a density w.r.t.\ a Gaussian reference measure. Our goal is
to provide a partial extension of these results to non log-concave 
measures. By considering the MALA transition step as a 
perturbation of the Euler proposals, we obtain contraction rates
w.r.t.\ a modified Kantorovich distance provided the discretization
time step is of order $h=O(d^{-1})$.\medskip

We consider a similar setup as in \cite{EberleAAP}: $\mu$ is a 
probability measure on $\mathbb R^d$ given by 
\begin{equation}
\label{eq:m1}
\mu (d x)=\mathcal Z^{-1} \exp (-U(x))  \dd x=
(2\pi )^{\nicefrac{d}{2}} \mathcal Z^{-1}  \exp (-V(x))  \gamma^d(d x) \,,
\end{equation}
where $V$ is a function in $C^4(\mathbb R^d)$,
\begin{equation}
\label{eq:m1a}
U(x)=\frac 12\lvert x\rvert^2  +  V(x),
\end{equation}
$\gamma^d$ denotes the $d$-dimensional standard normal
distribution, and 
\[
\mathcal Z=\int\exp (-U(x))  \dd x.
\]
We assume
that we are given a norm $\|\cdot\|_-$ on $\mathbb R^d$ such that
\begin{equation}
\label{eq:m2}
\| x\|_-\ \le\ |x|\ \le\ d  \| x\|_-\qquad\mbox{for any }x\in \mathbb R^d,
\end{equation}
as well as finite constants $C_n\in [0,\infty )$, $p_n\in \{ 0,1,2,\ldots
\}$, and $K_c,\mathcal R_c\in (0,\infty )$ such that the following conditions hold for any $n \in \{1, \dots, 4\}$:
\begin{eqnarray}
\label{eq:d1}  \quad\lvert \partial_{\xi_1, \dots, \xi_n}^n U(x) \rvert &\le& C_n \max (1, \| x \|_-^{p_n}) \| \xi_1 \|_- \cdots \| \xi_n \|_-\, \forall\, x, \xi_1, \dots, \xi_n \in \R^d.  \\
\label{eq:d2}  (\partial_{\xi \xi} U)(x) &\ge & K_c \lvert \xi \rvert^2  \qquad\forall\ x, \xi \in \R^d: \lvert x \rvert \ge \mathcal R_c  .
\end{eqnarray}
Here \eqref{eq:d2} can be interpreted as strict convexity of $U$ outside a Euclidean ball. 
\begin{remark}
\begin{enumerate}
\item 
For discretizations of infinite-dimensional models, $\| \wc \|_-$ is typically a finite-dimensional approximation of a norm that is almost surely finite w.r.t. the limit measure in infinite dimensions, see for instance \cite[Example 1.6]{EberleAAP}. Correspondingly, we may assume that the measure concentrates on a ball of a fixed radius w.r.t. $\|\wc\|_-$. This will be relevant for the application of Theorem \ref{thm:10} below, which states uniform contractivity on such balls. 
\item 
Condition \eqref{eq:d1} is the same condition that has been assumed in the strictly convex case in \cite{EberleAAP}. 
\item 
In \eqref{eq:d2}, we assume strict convexity outside a ball of fixed radius w.r.t.\ the Euclidean norm and not w.r.t.\ $\| \wc \|_-$. Such a bound can be expected to hold with $\mathcal R_c$ independent of the dimension if, for example, the non-convexity occurs only in finitely many directions. The application of a coupling approach in situations where \eqref{eq:d2} does not hold requires more advanced techniques, see e.g.\! \cite{Zimmer}.
\end{enumerate}
\end{remark}
The transition step of a Metropolis-Hastings chain with proposal density $p(x,y)$ and target distribution $\mu(d x) = \mu(x) \dd x$ is given by
\begin{equation}
\label{eq:m4} \tilde{x} = \begin{cases}X' &\text{if } \tilde{U} \le \alpha(x,X') \\ x & \text{otherwise.} \end{cases},
\end{equation}
where $x$ is the previous position, $X'$ is the proposed move, 
\[
\alpha (x,y) = \min \left(1, \frac{\mu(y) p(y,x) }{\mu(x) p(x,y)}\right)
\]
is the Metropolis-Hastings acceptance probability, and $\tilde{U} \sim \Unif (0,1)$ is a uniform random variable that is independent of $X'$. We consider the proposal
\begin{equation}
\label{eq:m6} X' = x-\frac{h}{2} x - \frac{h}{2} \nabla V(x) + \sqrt{h - \frac{h^2}{4}} Z, \quad Z \sim N(0, I_d) \,,
\end{equation}
where $h \in (0,2)$ is the step size of the time discretization. The corresponding proposal kernel is $p_h(x,\wc) = N(x-\frac{h}{2} x - \frac{h}{2} \nabla  V(x), (h - \nicefrac{h^2}{4}) I_d)$. Substituting $h = \frac{\varepsilon}{1+\nicefrac{\varepsilon}{4}}$, we see that the proposal is a transition step of the semi-implicit Euler discretization
\begin{equation}
\label{eq:m7} X' = x - \frac{\varepsilon}{2} \frac{X'+x}{2} - \frac{\varepsilon}{2} \nabla V(x) + \sqrt{\varepsilon} Z
\end{equation}
for the Langevin SDE $\dd X_t = -\frac{1}{2} X_t\dd t - \frac{1}{2} \nabla V(X_t) \dd t + \dd B_t$ with invariant measure $\mu$. The reason for considering the semi-implicit instead of the explicit Euler approximation is that under appropriate conditions, the acceptance probability
\begin{equation}
\label{eq:m8} \alpha_h (x,y) = \min \left(1, \frac{\mu(y) p_h(y,x)}{\mu(x) p_h(x,y)}\right)
\end{equation}
for the corresponding Metropolis-Hastings scheme has a better dimension dependence. Indeed, if $V$ vanishes, then $\alpha_h(x,y) = 1$. More generally, if \eqref{eq:d1} holds, then the average rejection probability is of order $O(h^{\frac{3}{2}})$.
\begin{lemma}[Upper bounds for rejection probability]\label{lem:3}
Suppose that \eqref{eq:d1} holds and let $k \in \N$. Then there exists an explicit polynomial $P_k \colon \R^2 \rightarrow \R_+$ of degree $\max(p_3 + 3, 3p_2 + 2)$ such that for any $x \in \R^2$ and $h \in (0,2)$,
\[
\E[ (1-\alpha_h(x,X'))^k]^{\frac{1}{k}} \le P_k (\| x \|_-, \| x + \nabla V(x) \|_-)h^{\frac{3}{2}} \,.
\]
\end{lemma}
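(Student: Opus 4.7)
The plan is to bound the rejection probability by the absolute value of the log Metropolis--Hastings ratio, compute this ratio explicitly, and exploit the cancellation built into the semi-implicit Euler proposal to extract the $h^{3/2}$ rate. Define
\[
G(x,y)\ :=\ \log\frac{\mu(y)p_h(y,x)}{\mu(x)p_h(x,y)}\ =\ -(U(y)-U(x))+\log p_h(y,x)-\log p_h(x,y).
\]
Since $1-\min(1,e^G)\le(-G)^+\le |G|$ (from $e^G\ge 1+G$), Minkowski's inequality reduces the statement to an upper bound on $\E[|G(x,X')|^k]^{1/k}$.

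Starting from the Gaussian density $p_h(x,\cdot)=N(x-(h/2)\nabla U(x),(h-h^2/4)I_d)$, elementary algebra yields
\[
\log p_h(y,x)-\log p_h(x,y)=\tfrac{1}{2(1-h/4)}(y-x)\cdot(\nabla U(x)+\nabla U(y))+\tfrac{h}{8(1-h/4)}(|\nabla U(x)|^2-|\nabla U(y)|^2).
\]
Combining this with the trapezoidal-rule identity $\tfrac12(y-x)\cdot(\nabla U(x)+\nabla U(y))-(U(y)-U(x))=\tfrac{1}{12}\nabla^3U(\xi)(e,e,e)$ for some $\xi\in[x,y]$ with $e=y-x$, and rewriting $|\nabla U(x)|^2-|\nabla U(y)|^2=-(\nabla U(y)-\nabla U(x))\cdot(\nabla U(x)+\nabla U(y))$, one arrives at
\[
G(x,y)\ =\ \tfrac{1}{12}\nabla^3U(\xi)(e,e,e)\ +\ \tfrac{h}{8(1-h/4)}\bigl[e-(\nabla U(y)-\nabla U(x))\bigr]\cdot\bigl(\nabla U(x)+\nabla U(y)\bigr).
\]
The cancellation of all $O(1)$ and $O(\sqrt h)$ contributions in the two summands is precisely what produces the $O(h^{3/2})$ acceptance loss, in contrast to the $O(h^{1/2})$ loss one would obtain for the explicit Euler proposal.

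Finally, substituting $y=X'=x-(h/2)\nabla U(x)+\sqrt{h-h^2/4}\,Z$ gives $\|e\|_-\le (h/2)\|x+\nabla V(x)\|_-+\sqrt h\,\|Z\|_-$, and all moments of $\|Z\|_-$ are finite by the norm equivalence \eqref{eq:m2}. For the cubic Taylor remainder, the bound $|\nabla^3U(\xi)(e,e,e)|\le C_3\max(1,\|\xi\|_-^{p_3})\|e\|_-^3$ from \eqref{eq:d1} with $n=3$, combined with H\"older's inequality, yields a contribution of order $h^{3/2}$ with a polynomial prefactor of total degree $p_3+3$ in $(\|x\|_-,\|x+\nabla V(x)\|_-)$. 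For the second summand the explicit factor $h$ combines with one factor $\|e\|_-=O(\sqrt h)$ to give the $h^{3/2}$ scaling; since $p_1$ does not appear in the conclusion, one replaces direct estimates on $|\nabla U|$ by $|\nabla U(x)|\le\mathrm{const}\cdot(1+\|x\|_-^{p_2+1})$ (obtained by integrating \eqref{eq:d1} with $n=2$ along the ray from $0$ to $x$), and uses the mean-value formula $\nabla U(y)-\nabla U(x)=\int_0^1\nabla^2U(x+se)e\,ds$; the resulting product of three $\nabla U$-type factors produces a polynomial of total degree at most $3p_2+2$. Summing the two contributions and taking the $k$-th root gives $P_k$ of degree $\max(p_3+3,3p_2+2)$. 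The main technical obstacle is this polynomial bookkeeping; the algebraic derivation of the formula for $G$ and the Gaussian moment computations are routine once the right form of $G$ is in hand.
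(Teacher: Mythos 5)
The paper does not prove Lemma~\ref{lem:3} itself; it refers to \cite[Proposition~1.7]{EberleAAP}. Your self-contained argument is correct and is the natural route (presumably the one taken in that reference): reduce the rejection probability to $|G|$ via $1-\min(1,e^G)\le(-G)^+\le|G|$, compute the log Metropolis--Hastings ratio $G$ explicitly from the Gaussian proposal densities, and then isolate the $h^{3/2}$ scaling. I checked the algebra: writing $\tfrac{1}{2(1-h/4)}=\tfrac12+\tfrac{h}{8(1-h/4)}$ and applying the trapezoidal error identity to the $\tfrac12$-part does give exactly
\[
G(x,y)=\tfrac{1}{12}\nabla^3U(\zeta)(e,e,e)+\tfrac{h}{8(1-h/4)}\bigl[e-(\nabla U(y)-\nabla U(x))\bigr]\cdot\bigl(\nabla U(x)+\nabla U(y)\bigr),
\]
with both summands of order $h^{3/2}$, and the degree bookkeeping you outline (bound $\max(1,\|\zeta\|_-^{p_3})\|e\|_-^3$ for the first term; replace $|\nabla U|$ by a degree-$(p_2{+}1)$ polynomial in $\|x\|_-$ obtained from \eqref{eq:d1} with $n=2$, and use the mean-value formula for $\nabla U(y)-\nabla U(x)$, for the second) indeed yields $\max(p_3+3,3p_2+2)$. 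The proof is correct, although the polynomial estimates are sketched rather than carried out in detail.

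One side remark is off: the explicit Euler proposal $N\bigl(x-\tfrac{h}{2}\nabla U(x),\,hI_d\bigr)$ also enjoys the trapezoidal cancellation and gives $G=O(h^{3/2})$ for fixed $d$, not $O(h^{1/2})$; what the semi-implicit choice $\sqrt{h-h^2/4}$ buys is exactness on the Gaussian part (hence a dimension-robust bound), not an improvement in the $h$-exponent. This does not affect the validity of your proof.
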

The proof of the lemma is given in \cite[Proposition 1.7]{EberleAAP}. The polynomials $P_k$ are explicit. Their coefficients depend only on the constants $C_2, C_3, p_2$ and $p_3$ in \eqref{eq:d1} and on the moments
\[
m_n = \E[\| Z\|_-^n], \, n \le k \max (p_3 + 3, 2p_2 + 2) \,.
\]
Apart from replacing $\sqrt{h}$ by $\sqrt{h - \frac{h^2}{4}}$, \eqref{eq:m6} coincides with the explicit Euler discretization of the SDE $\dd X_t = b(X_t) \dd t + \dd B_t$, where $b(x) = -\frac{1}{2} x - \frac{1}{2} \nabla V(x)$. 

Therefore, the results in the last section apply to the proposal chain, thus yielding a contraction rate of order $\Omega (h)$. Since the rejection probability is of higher order, we can then apply the perturbation result in \eqref{thm:4} to prove a corresponding contractivity for the \MALA chain. To this end, we consider the coupling $(\tilde{X}, \tilde{Y})$ of transition steps of the \MALA chain from positions $x,y \in \R^d$ given by \eqref{eq:m4} and
\begin{equation}
\label{eq:m9} \tilde{Y} = \begin{cases}
 Y' & \text{if } \tilde{U} \le \alpha(y,Y') \\ y &\text{otherwise.}
 \end{cases}
\end{equation}
where $(X', Y')$ is the (optimal) coupling for the proposal steps considered in (\ref{eq:0c}), and $\tilde{U} \sim \Unif(0,1)$ is independent of both $X'$ and $Y'$. Hence, the proposals are coupled optimally and the same uniform random variable $\tilde{U}$ is used to decide about acceptance or rejection for each of the steps. Nevertheless, in general $(\tilde{X}, \tilde{Y})$ is not an optimal coupling of the corresponding \MALA transition probabilities. 

\begin{theorem}[Contraction rates for MALA] \label{thm:10}
Suppose that conditions \eqref{eq:d1} and \eqref{eq:d2} hold and fix $ R \in (0,\infty)$. Then there exists a concave strictly increasing function $\tilde f \colon [0,\infty) \rightarrow [0,\infty)$ with $\tilde f(0) = 0$ and constants $c_3, h_0 \in (0,\infty)$ such that 
%for the metric $\rho(x,y) := f(|x-y|)$, 
for any $h \in (0,h_0d^{-1})$ and for any $x,y \in \R^d$ with $\| x \|_- \le R$, $\|y\|_- \le R$, 
\begin{equation}
\label{eq:m10} \E_{x,y} [\tilde f(|\tilde{X}- \tilde{Y}|) ] \le (1-c_3 h) \tilde f(|x-y|) \,.
\end{equation}
The function $\tilde f$ and the constants $c_3$ and $h_0$ depend only on $R$ and on the values of the constants $C_n$, $p_n$, $K_c$, $\mathcal R_c$ in assumptions \eqref{eq:d1}, \eqref{eq:d2}. 
\end{theorem}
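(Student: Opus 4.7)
My plan is to view the MALA step as a perturbation of the Euler proposal \eqref{eq:m6} and apply the perturbation result Theorem \ref{thm:4}. Up to replacing $\sqrt h$ by $\sqrt{h-h^2/4}$, the proposal is an Euler step for the Langevin SDE with drift $b(x)=-\tfrac12x-\tfrac12\nabla V(x)$, for which the results of Section \ref{sec:euler} supply a Kantorovich contraction with rate $\Theta(h)$; Lemma \ref{lem:3} then shows that the Metropolis accept/reject step perturbs this contraction only at order $h^{3/2}$ times polynomial factors depending on the positions $x,y$ and on Gaussian moments of $\|Z\|_-$. The strategy is to absorb this perturbation into the proposal rate by restricting $h$ suitably with $d$.

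\emph{Step 1 (contractivity of the proposal).} Restricted to $B_R:=\{x\in\R^d:\|x\|_-\le R\}$, assumption \eqref{eq:d1} with $n\le 2$ provides a local Lipschitz bound for $\nabla V$ and hence for $b$, and assumption \eqref{eq:d2} yields hypothesis \ref{enum:c2} for $b$ with $K=K_c/2$ and $\mathcal R=\mathcal R_c$. By Remark \ref{remark:thm2}, the global Lipschitz constant in Theorem \ref{thm:8} only enters through the admissible step size, so a localized version of that theorem produces a concave increasing $f\colon[0,\infty)\to[0,\infty)$ with $f(0)=0$ and constants $c_*,h_*>0$ depending only on $R$ and on the data of \eqref{eq:d1}--\eqref{eq:d2}, such that
\[
\E_{x,y}[\rho(X',Y')] \le (1-c_*h)\,\rho(x,y),\qquad \rho(x,y):=f(|x-y|),
\]
for every $x,y\in B_R$ and $h\in(0,h_*)$. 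This is hypothesis \eqref{eq:27} of Theorem \ref{thm:4} with $c:=c_*h$.

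\emph{Step 2 (perturbation hypotheses).} Setting $A_X:=\{\tilde U\le\alpha_h(x,X')\}$ and $A_Y:=\{\tilde U\le\alpha_h(y,Y')\}$, the fact that a rejection keeps the chain at its starting point together with the triangle inequality gives $|\tilde X-\tilde Y|\le|X'-Y'|+\1_{A_X^c}|X'-x|+\1_{A_Y^c}|Y'-y|$. Combining Cauchy--Schwarz, Lemma \ref{lem:3} with $k=2$ (so that $\E[(1-\alpha_h)^2]^{1/2}=O(h^{3/2})$ with implicit constant polynomial in $\|x\|_-\le R$), and the Gaussian bound for $|X'-x|$ on $B_R$ yields
\[
\E_{x,y}\bigl[(|\tilde X-\tilde Y|-|X'-Y'|)^+\bigr] \le C_R\,\Psi(d)\,h^2,
\]
where $C_R$ depends only on $R$ and the data of \eqref{eq:d1}, and $\Psi(d)$ is a polynomial factor coming from the moments $m_n=\E[\|Z\|_-^n]$ in $P_2$, bounded via \eqref{eq:m2} only by $m_n\lesssim d^{n/2}$. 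Taking $b:=C_R\Psi(d)h^2$ verifies \eqref{eq:28} with slack, and the side condition $b\le c_*f(r_0)/4$ at $r_0:=\sqrt h$ becomes a constraint of the form $h\le h_0/d$. For \eqref{eq:29}, Lemma \ref{lem:6}(vii) gives $\P_{x,y}[X'=Y']\ge p_0$ whenever $|x-y|<r_0$; conditionally on $\{X'=Y'\}$ the two MALA chains share the same $\tilde U$, so the joint acceptance probability equals $\min(\alpha_h(x,X'),\alpha_h(y,X'))$, and Lemma \ref{lem:3} bounds its expectation below by $1-O(h^{3/2}\Psi(d))\ge 1/2$ once $h\le h_0/d$. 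Hence \eqref{eq:29} holds with $p:=p_0/2$.

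\emph{Step 3 (conclusion and main obstacle).} Theorem \ref{thm:4} now produces
\[
\E_{x,y}[\tilde\rho(\tilde X,\tilde Y)] \le \bigl(1-\tfrac18\min(c_*,2p)\,h\bigr)\,\tilde\rho(x,y),\qquad \tilde\rho(x,y)=\rho(x,y)+\tfrac{2b}{p}\1_{x\ne y},
\]
which is \eqref{eq:m10} with $\tilde f(r):=f(r)+\tfrac{2b}{p}\1_{r>0}$, a concave strictly increasing function with $\tilde f(0)=0$, and $c_3:=\tfrac18\min(c_*,2p)$. \textbf{The main obstacle} is tracking the dimension dependence throughout. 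The only control available on the Gaussian moments entering $P_2$ is $m_n\lesssim d^{n/2}$ via \eqref{eq:m2}, so both the perturbation size $b$ and the lower bound on joint acceptance carry $d$-dependent factors, and these are exactly what force the step-size restriction $h\le h_0/d$; a careful tuning of $b$, exploiting the $O(h^{3/2})$ higher-order nature of rejection against the $\Theta(h)$ proposal rate, is needed to conclude that $\tilde f$, $c_3$ and $h_0$ themselves depend only on $R$ and the constants in \eqref{eq:d1}--\eqref{eq:d2}.
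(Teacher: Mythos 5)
Your overall strategy is the same as the paper's: contractivity of the semi-implicit Euler proposal on $B_R^-$ via (a localized) Theorem~\ref{thm:8}, then the perturbation result Theorem~\ref{thm:4}, with hypotheses \eqref{eq:28} and \eqref{eq:29} verified through Lemma~\ref{lem:3}. Your conditioning-on-$\{X'=Y'\}$ argument for \eqref{eq:29} is a valid alternative to the paper's union bound $\P[\tilde X\ne\tilde Y]\le\P[X'\ne Y']+\P[\tilde X\ne X']+\P[\tilde Y\ne Y']$ used in Lemma~\ref{lem:10b}, and your decomposition in Step~2 matches the estimate $\E_{x,y}[(|\tilde X-\tilde Y|-|X'-Y'|)^+]\le\E|\tilde X-X'|+\E|\tilde Y-Y'|$ in the paper.

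There is, however, a genuine slip in your dimension bookkeeping. You bound the moments $m_n=\E[\|Z\|_-^n]$ by $d^{n/2}$ and feed this into the polynomials $P_k$ of Lemma~\ref{lem:3}; since these have degree $\max(p_3+3,3p_2+2)$, your resulting $\Psi(d)$ can grow like a high power of $d$, and then the side condition $b\le c_*f(\sqrt h)/4\sim h^{3/2}$ would force $h\lesssim\Psi(d)^{-2}$, which is \emph{much} more restrictive than the claimed $h\le h_0/d$. The paper avoids this: the moments $m_n$ are treated as fixed quantities absorbed into the constants $c',c'',c'''$ (consistent with the infinite-dimensional interpretation in the remark following \eqref{eq:m2}, where $\|\cdot\|_-$ is a discretization of a norm under which $Z$ has a.s.\ finite, dimension-free moments). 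The only $d$-dependence in the paper's proof enters through the \emph{Euclidean} quantities $\E_{x,y}[|Z|^2]^{1/2}=d^{1/2}$ in the Cauchy--Schwarz step and $|b(x)|\le d\|b(x)\|_-$ from \eqref{eq:m2}, giving $b=2c'''h^{3/2}(dh+d^{1/2}h^{1/2})$ and hence exactly the constraint $dh<h_1$. So you should not bound $m_n$ by $d^{n/2}$; instead keep them as constants of the model, and track only the $d$-dependence coming from $|Z|$ and $|b(x)|$. With that correction your argument delivers the stated $h_0/d$ threshold.
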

The proof of Theorem \ref{thm:10} is given in Section \ref{sec:proofof\MALA}.
\begin{remark}
The theorem shows that by choosing the step size of order $\Theta (d)$, a contraction rate of the same order holds on balls w.r.t. $\| \wc \|_-$ provided conditions \eqref{eq:d1} and \eqref{eq:d2} are satisfied. In the strictly convex case, it has been shown in \cite{EberleAAP} by a synchronous coupling that a corresponding result holds even for step sizes of order $\Theta (1)$ if the Euclidean norm in \eqref{eq:m10} is replaced by $\| \wc \|_-$. One could hope for a similar result in the not globally convex case, but the combination of reflection coupling with a different norm leads to further difficulties. A possibility to overcome these difficulties might be the two-scale approach developed in \cite{Zimmer}. 
\end{remark}

\section{Proofs of Theorems \ref{thm:1} and \ref{thm:2}}\label{sec:proofs12}
In this section, we prove the first two theorems. We first specify the explicit choice of the metric and the explicit values of the contraction rate $c$. The reason for choosing the metric this way will become clear by the subsequent proofs of the theorems. 

For $r, s > 0$, we consider the intervals
\begin{equation}
\label{eq:501} I_r = ((r-\varepsilon)^+,r)
\end{equation}
and the dual intervals
\begin{equation}
\label{eq:502} \hat{I}_s = \{ r > r_0 : s \in I_r\} = (s,s+\varepsilon) \cap (r_0, \infty) \,.
\end{equation}
For $r \in (r_0,\infty)$ we set
\begin{equation}
\label{eq:5stara} \overline{\gamma} (r) = \nicefrac{2\overline{\beta} (r)}{\underline{\alpha} (r)} \,.
\end{equation}
%\added{Probably should be $\overline{\beta}$ instead of $\overline{A}$.}
Let $\tilde{\gamma} \colon [0,\infty) \rightarrow [0,\infty)$ be a function satisfying
\begin{eqnarray}
\label{eq:5star} \sup_{r \in \hat{I}_s} \overline{\gamma} (r) &\le & \tilde{\gamma} (s) \quad\text{ for any } s \in [0,\infty),\qquad\text{i.e.,}\\
\label{eq:503} \overline{\gamma}(r) &\le & \tilde{\gamma} (s) \tforall r > r_0, \, s \in I_r \,.
\end{eqnarray}
By assumptions \ref{enum:a1} and \ref{enum:a2}, such a function exists. If \ref{enum:a3} holds, then we may assume w.l.o.g. that $\tilde{\gamma} (s) = 0$ for large $s$.

\subsection{Choice of the metric in Theorem \ref{thm:1}}\label{subsec:choiceOfMetricThm1}
Suppose conditions \ref{enum:a1}, \ref{enum:a2} and \ref{enum:a3} hold. We set
\begin{equation}\label{eq:choice r1}
r_1 \coloneqq \sup \{ r > 0 : \tilde{\gamma} (r) > 0\},
\end{equation}
where $\sup \emptyset = 0$. By Assumption \ref{enum:a3} we can choose $\tilde{\gamma}$ such that $r_1$ is finite. We have 
\begin{align}
\label{eq:50} \tilde{\gamma}(r) &= 0 \,,& \overline{\gamma} (r) &\le 0 \,,&\overline{\beta}(r) &\le 0 \, \tforall r \ge r_1 \,.
\end{align}
We also fix a constant $r_2 \in (r_1, \infty)$. The value of $r_2$ will be determined in condition \eqref{eq:5d} below. The underlying metric we consider is given by \eqref{eq:13}, where $f \colon [0,\infty) \rightarrow [0,\infty)$ is a concave increasing function defined by 
\begin{equation}
\label{eq:5000} f(r) = a \1_{r > 0} + \int_0^r \varphi (s \wedge r_2) g(s\wedge r_2) \dd s
\end{equation}
with decreasing differentiable functions $\varphi$ and $g$ such that $\varphi(0) = g(0) = 1$ and a constant $a \in (0,\infty)$ that are all specified below. Hence, $f$ is twice differentiable except at $0$, $f(0+) -f(0) = a$, $f' = \varphi g$ on $(0,r_2)$, and $f'$ is constant on $[r_2, \infty)$.

The function $\varphi$ and the constant $a$ are chosen such that
\begin{align}
\label{eq:5a} \varphi(r) &= \exp\left(-\int_0^r \tilde{\gamma} (s) \dd s \right),\qquad\text{and}\\
\label{eq:5c} a &\ge r_0 + 2 \sup_{\lvert x-y \rvert \le r_0} \frac{\beta(x,y)}{\pi(x,y)} .
\end{align}
Notice that by \eqref{eq:50}, the function $\varphi (r)$ is constant for $r\ge r_1$. 
Setting
\begin{equation}
\label{eq:5e} \Phi(r) = \int_0^r \varphi(s) \dd s \,,
\end{equation}
the constant $r_2$ is chosen such that
\begin{equation}
\label{eq:5d}-\frac{\overline{\beta(r)}}{a+\Phi(r)} \ge \frac{1}{2} \left(\int_{r_1}^{r_2} \frac{\Phi(s)}{\underline{\alpha}(s)} \dd s \right)^{-1} \, \text{ for all } r \ge r_2 \,.
\end{equation}
Assumption \ref{enum:a3} ensures that such a constant exists. Indeed, for $r \ge r_1$, $\tilde{\gamma}$ vanishes, whence $\varphi$ is constant and $\Phi$ is linear. By definition of $\alpha$, {we see that} $\underline{\alpha}$ is uniformly bounded by $\varepsilon^2$. Therefore, the value on the right hand side of \eqref{eq:5d} goes to zero as $r_2 \rightarrow \infty$, and \eqref{eq:5d} holds for large $r_2$ by \ref{enum:a3}. 

The contraction rate is now given by 
\begin{equation}
\label{eq:5f} c = \min\left( \frac{1}{2} \inf_{r \leq r_0} \underline{\pi}(r), \frac{1}{4} \left( \int_0^{r_2} \frac{1}{\varphi(s)} \sup_{u \in \hat{I}_s} \frac{a + \Phi (u)}{\underline{\alpha} (u)} \dd s \right)^{-1}\right)
\end{equation}
and the function $g$ is defined as 
\begin{equation}
\label{eq:5g} g(r) = 1-2c \int_0^r \frac{1}{\varphi(s)} \sup_{u \in \hat{I}_s} \frac{a + \Phi (u)}{\underline{\alpha} (u)} \dd s \,.
\end{equation}
Note that \eqref{eq:5f} guarantees that $g(r) \ge \frac{1}{2}$ for $r \le r_2$.

\subsection{Choice of the metric in Theorem \ref{thm:2}}\label{subsec:choiceOfMetricThm2}
Now suppose that \ref{enum:a1}, \ref{enum:a2} and \ref{enum:a4} hold. In this case we set
\begin{equation}
\label{eq:6r1} r_1 \coloneqq \sup \{ d(x,y) : x,y \in S \,, \, V(x) + V(y) < \nicefrac{4C}{\lambda}\} \,.
\end{equation}
By \ref{enum:a4ii} and \ref{enum:a2}, $r_1$ is finite. Moreover, by \ref{enum:a4i},
\begin{equation}
\label{eq:60} \E_{x,y} [V(X') + V(Y') ] \le \left(1-\frac{\lambda}{2}\right) (V(x) + V(y) )\qquad \text{if } d(x,y) \ge r_1 \,.
\end{equation}
We also fix a constant $r_2 \in (r_1, \infty)$. The value of $r_2$ will be determined by condition \eqref{eq:6d} below. 

The function $f \colon [0,\infty) \rightarrow [0,\infty)$ determining the metric in \eqref{eq:15} is now defined by 
\begin{equation}
\label{eq:6000} f(r) = a \1_{r>0} + \int_0^r \varphi(s \wedge r_2) g(s\wedge r_2) \dd s
\end{equation}
with decreasing differentiable functions $\varphi$ and $g$ such that $\varphi(0) = g(0) = 1$, and a constant $a \in (0,\infty)$ that are all specified below. Hence, $f$ is twice differentiable except at $0$, $f(0+) - f(0) = a$, $f' = \varphi g$ on $(0,r_2)$ and $f'$ is constant on $[r_2, \infty)$.

The function $\varphi$ and the constants $a$ and $M$ in \eqref{eq:15} are chosen such that
\begin{align}
\label{eq:6a} \varphi(r) &= \exp \left(-\int_0^r \tilde{\gamma} (s) \dd s \right) \,, \\
\label{eq:6b} M &\le \frac{1}{4} \left(\int_0^{r_1} \frac{1}{\varphi(s) } \sup_{u \in \hat{I}_s} \frac{1}{\underline{\alpha} (u)} \dd s \right)^{-1} \,, \\
\label{eq:6c} a &\ge r_0 + 2 \sup_{\lvert x-y \rvert \le r_0} \frac{\beta(x,y) + M}{\pi (x,y)} \,, \\
\label{eq:6d}  \overline{\beta}(r) \varphi(r) &\le \frac{\lambda M}{16\, C} (V(x) + V(y))\ \text{ if } \lvert x-y \rvert \ge r_2 \,.
\end{align}
By \ref{enum:a4ii} and since $\varphi \le 1$, there always exists a finite $r_2$ such that \eqref{eq:6d} holds. To optimize the estimates, we choose $r_2$ as small as possible, i.e., we set
\begin{equation}
\label{eq:6e} r_2 = r_1 \vee \sup \left\{ d(x,y) = r : x,y \in S, {V(x) + V(y)} < \frac{16\, C}{\lambda M} {\overline{\beta} (r)}\varphi(r) \right\} \,.
\end{equation}
Setting
\begin{equation}
\Phi (r) = \int_0^r \varphi(s) \dd s \,,
\end{equation}
the contraction rate $c$ is given by
\begin{equation}
\label{eq:6f} c\ =\ \min \left( \frac{1}{2} \inf_{r \leq r_0} \underline{\pi}(r) ,\, \frac{\lambda}{4} ,\, \frac{\lambda M}{16\, C}\inf_{r\ge r_2}\frac{V(x)+V(y)}{\Phi (r)} ,\, \frac{1}{8} \left( \int_0^{r_2} \frac{1}{\varphi(s)} \sup_{u \in \hat{I}_s} \frac{a + \Phi (u)}{\underline{\alpha} (u)} \dd s \right)^{-1}\right) \,.
\end{equation}
and the function $g$ is defined as 
\begin{equation}
\label{eq:6g} g(r) = 1-2c \int_0^r \frac{1}{\varphi(s)} \sup_{u \in \hat{I}_s} \frac{a + \Phi (u)}{\underline{\alpha} (u)} \dd s - M \int_0^{r \wedge r_1} \frac{1}{\varphi(s)} \sup_{u \in \hat{I}_s} \frac{1}{\underline{\alpha} (u)} \dd s \,. 
\end{equation}
Note that \eqref{eq:6f} and \eqref{eq:6b} guarantee that $g(r) \ge \frac{1}{2}$ for $r \le r_2$. In the minimum defining $c$, the first term guarantees contractivity for $r \le r_0$, the second term is used for all $r$, the third term guarantees contractivity for $r \ge r_2$ and the last term ensures contractivity with rate $c$ for $r_0 < r \le r_1$.

\subsection{Proof of Theorem \ref{thm:1} and Theorem \ref{thm:2}}
Since the arguments are similar, we prove both theorems simultaneously, distinguishing cases where needed. In the situation of Theorem \ref{thm:1}, we set {$M=0$}. Let $x,y \in \R^d$ such that $r = \lvert x-y \rvert > 0$. Since $f''(t) \le 0$ for all $t>0$,
\begin{eqnarray*}
f(R')-f( r)&=&-a \1_{R' = 0}\, +\, \int_{ r}^{R'}f'(s)\,ds\\ 
&=& -a \1_{R' = 0}\, +\, (R'- r)f'( r)\, +\, \int_{ r}^{R'}\int_{ r}^sf''(t)\, dt\, ds\\
&\le & -a \1_{R' = 0}\, +\, (R' - r) f' (r)\, +\, \frac{1}{2} ((R'-r)^- \wedge \varepsilon)^2 \sup_{u \in {I}_r} f''(u),
\end{eqnarray*}
where $I_r = ((r-\varepsilon)^+, r)$. By taking expectations, we conclude that
\begin{equation}
\label{eq:7*}\E_{x,y} [f(R' ) -f(r)] \le -a \pi (x,y) + \beta (x,y) f'(r) + \frac{1}{2} \alpha(x,y)\sup_{u \in {I}_r} f''(u).
\end{equation}
Our goal is to compensate the second term by the first term for $r \le r_0$ and by the last term for $r_0 < r \le r_2$ (and possibly by a Lyapunov part for $r \ge r_2$). In order to verify \eqref{eq:14} and \eqref{eq:16}, we now distinguish three cases.\smallskip\\
\emph{Case $r \in (r_0, r_2)$.} Since $f' = g\varphi$ on $(0,r_2)$, we have 
\begin{equation}
\label{eq:71} \sup_{ {I}_r} f''\ \le\ \sup_{ {I}_r} (g' \varphi)  + \sup_{ {I}_r} (g\varphi') \,.
\end{equation}
Note that both summands are negative since $g$ and $\varphi$ are decreasing.
Now we note first that our choice of $\varphi$ guarantees that 
\begin{equation}
\label{eq:72} \frac{1}{2} \alpha(x,y) \sup_{ {I}_r} (g\varphi')  + \beta(x,y) f' (r) \le 0 \,.
\end{equation}
Indeed, \eqref{eq:72} is satisfied provided
\begin{equation}
\label{eq:72'} \sup_{ {I}_r} (g\varphi') \le - \overline{\gamma}(r) g(r) \varphi(r) \,.
\end{equation}
Since $\varphi' \le 0$ and $g$ is decreasing, we have
\[
\sup_{ {I}_r} (g\varphi') \le \inf_{ {I}_r} g\, \sup_{ {I}_r}\varphi'  \le g(r) \sup_{ {I}_r}\varphi' .
\]
Hence, \eqref{eq:72'} is satisfied if
\begin{equation}
\label{eq:72''}  \sup_{s \in {I}_r}\varphi'(s) \le -\overline{\gamma} (r) \varphi(r).
\end{equation}
But indeed, by definition of $\varphi$ and $\tilde{\gamma}$, we have for $s \in I_r$
\[
\varphi'(s) = -\tilde{\gamma} (s) \varphi(s) \le -\overline{\gamma} (r) \varphi(r) \,.
\]
Next, we observe that our choice of $g$ (in particular, $g \leq 1$) guarantees that
\begin{equation}
\label{eq:73} \frac{1}{2} \alpha(x,y) \sup_{{I}_r}(g'\varphi )  + M \1_{r < r_1} \le -c f(r) \,.
\end{equation}
Indeed, since $f(r) \le a + \Phi (r)$ and $f'(r) \le 1$, it is sufficient to show
\begin{equation}
\label{eq:73'} \sup_{s \in {I}_r}(g'(s)\varphi(s)) \le -2c \frac{a+\Phi (r)}{\underline{\alpha} (r)} - \frac{M\1_{r<r_1}}{\underline{\alpha} (r)} \quad\text{ if } r_0 < r < r_2,
\end{equation}
or
\begin{equation}
\label{eq:73''} g'(s) \varphi(s) \le -2c \sup_{r \in \hat{I}_s} \frac{a +\Phi (r)}{\underline{\alpha} (r)} - \sup_{r \in \hat{I}_s} \frac{M \1_{r < r_1}}{\underline{\alpha} (r)}\quad \text{ if } 0 < s \le r_2 \,.
\end{equation}
In \eqref{eq:5g}, \eqref{eq:6g} respectively, the function $g$ has been defined in such a way that this condition is satisfied. 
Now, by combining \eqref{eq:7*}, \eqref{eq:72} and \eqref{eq:73}, and bounding  the term $-a \pi(x,y)$ in \eqref{eq:7*} by zero, we obtain for $r \in (r_0, r_2)$:
\begin{equation}
\label{eq:73a} \E_{x,y} [ f(R') - f(r) ] \le - M \1_{r < r_1} - cf(r) \,.
\end{equation}

In the setup of Theorem \ref{thm:1}, we have chosen $M=0$ and $\rho(x,y) = f(d(x,y))$. Hence, \eqref{eq:73a} implies the assertion
\begin{equation}
\label{eq:7ba} \E_{x,y} [\rho(X',Y') ] \le (1-c)  \rho(x,y) \qquad\text{for } r = d(x,y) \in (r_0, r_2) \,.
\end{equation}

In the setup of Theorem \ref{thm:2}, by Assumption \ref{enum:a4}, 
\begin{equation}
\label{eq:7bb} \E_{x,y} [V(X') + V(Y')] \le (1-\lambda) (V(x) + V(y)) + 2C \,.
\end{equation}
Since $\rho(x,y) = f(d(x,y)) + \frac{M}{2 C} (V(x) + V(y)) \1_{x \neq y} $, we obtain for $r\in (r_0,r_1)$:
\begin{eqnarray}
\label{eq:73b} \lefteqn{\E_{x,y} [\rho(X',Y')]}\\  \nonumber 
&\le &-M + (1-c) f(d(x,y)) + \frac{M}{2C} (1-\lambda )(V(x) + V(y))+ M \\ & \le &(1-c) \rho(x,y).\nonumber
\end{eqnarray}
Here the last inequality holds since $\lambda \ge c $. 
On the other hand, for $r \in [r_1,r_2)$, we have $V(x) + V(y) \ge {4C}/{\lambda}$ by \eqref{eq:6r1}. Hence in this case, by \eqref{eq:7bb}, 
\begin{equation}
\label{eq:7bba} \E_{x,y}[V(X') + V(Y')]\ \le\ \left(1-{\lambda}/{2}\right) (V(x) + V(y)) \,.
\end{equation}
Since $c \le {\lambda}/{2}$, \eqref{eq:73a} and \eqref{eq:7bba} then again imply
\begin{eqnarray}
\label{eq:7bc} {\E_{x,y} [\rho (X',Y')]} &\le &(1-c) f(d(x,y)) + \left(1-\frac{\lambda}{2}\right) \frac{M}{2C} (V(x) + V(y)) \\ \nonumber & \le & (1-c) \rho (x,y) \,.
\end{eqnarray}

\emph{Case $r \le r_0$.} Noting that $f'' \le 0$ and $f' \le 1$ and applying \ref{enum:a2}, we see that for $r \in (0, r_0)$, \eqref{eq:7*} implies 
\begin{equation}
\label{eq:74a} \E_{x,y}[f(R') - f(r)] \le -a \pi (x,y) + \beta(x,y) \le - M - c(a+ r_0) \le -M - cf(r)
\end{equation}
provided $c \le \frac{1}{2} \pi (x,y)$ and 
\begin{equation}
\label{eq:74'} \frac{a}{2} \pi (x,y) \ge \frac{r_0}{2} \pi (x,y) + M + \beta(x,y) \,.
\end{equation}
This condition is satisfied by our choice of $a$, cf.\ \eqref{eq:6c}. Again, using \eqref{eq:7bb}, and since $c \le {\lambda}/{2}$, we obtain
\begin{multline*}
\E_{x,y} [\rho(X',Y') ] \\\le -M + (1-c) f(r) + (1-\lambda) \frac{M}{2C} (V(x) + V(y)) + 2C \frac{M}{2C} \le (1-c) \rho(x,y) \,.
\end{multline*}
\emph{Case $r \ge r_2$.} Here, we use the bound
$
f(R') - f(r) \le (R' - r) f'(r)
$
yielding 
\begin{equation}
\label{eq:7**} \E_{x,y} [f(R') - f(r)] \le \beta(x,y) f'(r).
\end{equation}

Now, we consider first the setup of Theorem \ref{thm:1}. Here, for $r \ge r_2$, we have
\[
\E_{x,y} [f(R') - f(r) ] \le \beta(r) f'(r) \le \beta(r) \frac{\varphi(r_1)}{2} \,,
\]
where we have used that by \eqref{eq:50}, $\beta(r) \le 0$, $f' \ge \nicefrac{\varphi}{2}$ and $\varphi$ is constant on $[r_1, \infty)$. To prove that the right hand side is bounded from above by $-cf(r)$, it is sufficient to show
\[
c(a+\Phi (r)) \le -\beta(r) \frac{\varphi(r_1)}{2} \tforall r \ge r_2 \,.
\]
We claim that this holds by the definition of $r_2$. Indeed, by the definition of $c$,
$$ c^{-1} \ge 4 \int_0^{r_2} \frac{1}{\varphi(s)} \sup_{u \in \hat{I}_s} \frac{a +\Phi (u)}{\underline{\alpha} (u)} \dd s \\\ge 4 \int_{r_1}^{r_2}\frac{\Phi (s)}{\varphi(s)\underline{\alpha}(s)} \dd s = \frac{4}{\varphi(r_1)} \int_{r_1}^{r_2} \frac{\Phi (s)}{\underline{\alpha} (s)} \dd s \,.
$$
Hence, by \eqref{eq:5d}, 
\[
c(a+\Phi (r)) \le \frac{1}{4} \left(\int_{r_1}^{r_2} \frac{\Phi(s) }{\underline{\alpha} (s)} \dd s \right)^{-1} (a+ \Phi (r)) \varphi(r_1) \le -\beta(r) \frac{\varphi(r_1)}{2},
\]
and thus
\[
\E_{x,y} [\rho (X', Y') - \rho (x,y) ] \le -c (a+\Phi (r)) \le -c \rho(x,y) \,.
\]

Finally, we now show contractivity for $r \ge r_2$ under the conditions in Theorem \ref{thm:2}. Here, by \eqref{eq:7bba} and \eqref{eq:7*},
\begin{eqnarray}
\E_{x,y} [\rho(X',Y')] &=& \E_{x,y} [f(R')] + \frac{M}{2C} \E_{x,y} [V(X') + V(Y')]\\
 \nonumber &\le & f(r) + \overline{\beta} (r) f'(r) + \frac{M}{2C} \left(1-\frac{\lambda}{2}\right) (V(x) + V(y)) \,.
\end{eqnarray}
Since $c \le {\lambda}/{4}$ by its definition, we obtain
\begin{equation}
\label{eq:75} \E_{x,y}[\rho(X',Y')] \le (1-c) \left(f(r) + \frac{M}{2C} (V(x) + V(y))\right) = (1-c) \rho(x,y)
\end{equation}
provided
\begin{equation}
\label{eq:75i} cf(r) + \overline{\beta} (r)f'(r) \le \frac{M}{2C} \frac{\lambda}{4} (V(x) + V(y)) \,.
\end{equation}
However, due to our choice of $r_2$ in (\ref{eq:6e}) and since $f' \leq \varphi$, we have
\[
\overline{\beta} (r) f'(r) \le \frac{\lambda M}{16C} (V(x) + V(y))\quad \text{ if } \lvert x-y \rvert \ge r_2 \,.
\]
Moreover, due to our choice of $c$ in (\ref{eq:6f}) and since $f \leq \Phi$, we get
\begin{equation*}
cf(r) \leq \frac{\lambda M}{16C} (V(x) + V(y))\quad \text{ if } \lvert x-y \rvert \ge r_2 \,.
\end{equation*}
Hence (\ref{eq:75i}) is indeed satisfied for $r \geq r_2$ and the proof is complete.

\section{Proof of Theorem \ref{thm:3}}\label{sec:proof3}
For proving Theorem \ref{thm:3}, we proceed in a similar way as in the proofs of Theorem \ref{thm:1} and Theorem \ref{thm:2} above. Suppose that conditions \ref{enum:b1}, \ref{enum:b2} and \ref{enum:b3} hold. Now, the intervals $I_r$, $r \in (0,\infty)$ are given by \eqref{eq:17} and we consider the dual intervals $\hat{I}_s$, $s \in (0,\infty)$ defined by
\begin{equation}
\label{eq:80} \hat{I}_s = \{ r \in (0,\infty) : s \in I_r\}.
\end{equation}
By \eqref{eq:18}, $I_r = (r-\ell (r), r)$ for $r \ge r_0$ and $I_r \subseteq (0,2r_0)$ for $r < r_0$. Therefore
\begin{align}
\label{eq:8.} \hat{I}_s &= \{ r > s : r-\ell(r) < s \}\qquad\text{ for } s \ge 2r_0,\ \text{ and} \\
\label{eq:8..} \hat{I}_s &\subseteq \{ r > s : r-\ell(r) < 2r_0 \}\qquad\text{ for } s< 2r_0 \,.
\end{align}
Let $\overline{\gamma} (r) = {2 \ob (r)}/{\ua (r)}$ as in \eqref{eq:5stara}. Similarly as in the proof of Theorem \ref{thm:1} and Theorem \ref{thm:2}, we assume that $\tilde{\gamma} \colon [0,\infty) \rightarrow [0,\infty)$ is a  function satisfying
\begin{align}
\label{eq:8**i} \sup_{\hat{I}_s} \overline{\gamma} &\le \tilde{\gamma} (s) \tforall s \in (0,\infty) \,, \\
\label{eq:8**ii} 4\sup\limits_{\hat{I}_s} \overline{\gamma} &\le \tilde{\gamma} (s) \tforall s \in (0,2r_0) \,, \quad\text{ and}\\
\label{eq:8**iii} \int_0^{2r_0} \tilde{\gamma} (s) \dd s &\le \log 2 \, .
\end{align}
Note the additional factor $4$ that has been introduced for technical reasons for $s < 2r_0$. In applications, this will usually not affect the bounds too much, as typically $r_0$ is a small constant. Condition \eqref{eq:8**iii} can always be satisfied by choosing $r_0$ small enough. As in \eqref{eq:choice r1}, we set
\begin{equation}
\label{eq:8a}
r_1 \coloneqq \sup \{ r > 0 : \tilde{\gamma} (r) > 0\},
\end{equation}
where $\sup \emptyset = 0$. Similarly as below \eqref{eq:choice r1}, by Assumption \ref{enum:b3}, we can choose $\tilde{\gamma}$ such that $r_1$ is finite.
 The metric is chosen similarly as in the proof of Theorem \ref{thm:1} above, where now $a=0$.
We define
\begin{equation}
\label{eq:8b} f(r) = \int_0^r \varphi(r \wedge r_2) g (s \wedge r_2) \dd s \,.
\end{equation}
Here
\begin{equation}
\label{eq:8c} \varphi(r) = \exp \left(-\int_0^r \tilde{\gamma} (s) \dd s \right),\qquad
 \Phi (r)= \int_0^r \varphi(s) \dd s,
\end{equation}
the constant $r_2$ is chosen such that 
\begin{equation}
\label{eq:8e} \frac{-\ob(r)}{\Phi (r)} \ge \frac{1}{8} \left(\int_{r_1}^{r_2} \frac{\Phi (s)}{\ua(s)} \dd s \right)^{-1} \text{ for } r \ge r_2,
\end{equation}
and
\begin{equation}
\label{eq:8g} g(r) = 1-2c \int_0^r \frac{1}{\varphi(s)} \sup_{u \in \hat{I}_s} \frac{\Phi (u)}{\ua (u)} \dd s,
\end{equation}
where the contraction rate $c$ is given by
\begin{equation}
\label{eq:8f} c = \frac{1}{4} \left(\int_0^{r_2} \frac{1}{\varphi(s)} \sup_{u \in \hat{I}_s} \frac{\Phi (u)}{\ua(u)} \dd s\right)^{-1} \,.
\end{equation}

\begin{proof}[Proof of Theorem \ref{thm:3}]
Let $x,y \in \R^d$ and $r = d(x,y)$.\smallskip\\ 
For $r \ge r_2$, \eqref{eq:21} follows in the same way as in the proof of Theorem \ref{thm:1} (with $a=0$). The crucial assumption for this is \eqref{eq:8e}, which holds due to \ref{enum:b3}, by analogy to \eqref{eq:5d} in the proof of Theorem \ref{thm:1}, which holds due to \ref{enum:a3}. \smallskip\\ 
Now assume that $r < r_2$. To prove \eqref{eq:21}, we show that
\begin{equation}
\label{eq:8**} \E_{x,y} [f(R')-f(r)] \le \ob (r) f'(r) + \frac{1}{2} \ua (r) \sup_{ I_r} f''\ \le\ -c f(r) \,.
\end{equation}
The first inequality follows similarly as in the proof of Theorem \ref{thm:1}, cf.\ \eqref{eq:7*}. To prove the second inequality, note that on $(0,r_2)$, 
\begin{align}
\label{eq:8***} f' &= g \varphi \,, & f'' &= g\varphi' + g' \varphi & \text{ and }& & f &\le \Phi \,. 
\end{align}
By \eqref{eq:8***} it is sufficient to show that $\varphi$, $g$ and $c$ have been chosen in such a way that
\begin{align}
\label{eq:8b1} \sup_{ I_r} (g \varphi') &\le -2 \frac{\ob (r)}{\ua (r)} g(r) \varphi(r)\\
\label{eq:8b2} \sup_{ I_r} (g' \varphi ) &\le -2 c\frac{\Phi  (r)}{\ua (r)} \,.
\end{align}
Then, by \eqref{eq:8**} we can conclude that
\[
\E_{x,y} [\rho(X',Y') - \rho(x,y) ] = \E_{x,y} [f(R') - f(r)] \le - c\Phi  (r) \le -c f(r) = -c \rho(x,y) \,.
\]
We first verify \eqref{eq:8b1}. This condition is satisfied provided
\begin{equation}
\label{eq:8b3} g(s) \varphi'(s) \le - \sup_{ \hat{I}_s} (\overline{\gamma} g \varphi ) \tforall s \le r_2.
\end{equation}
For $s \ge 2r_0$ we have
\[
\sup_{\hat{I}_s} (\overline{\gamma} g \varphi) \le \left(\sup_{\hat{I}_s} \overline{\gamma} \right)\left(\sup_{\hat{I}_s} g \varphi\right) \le \tilde{\gamma} (s) g(s) \varphi(s) \,,
\]
because $\hat{I}_s \subseteq (s,\infty)$ by \eqref{eq:8.} and since $g\varphi$ is decreasing. Hence, \eqref{eq:8b3} holds by definition of $\varphi$.

For $s < 2r_0$ we have to argue differently, since, in general, {$\hat{I}_s$ is not contained in $(s,\infty)$} in this case. Observe first that if $\sup_{\hat{I}_s} (\overline{\gamma} g \varphi) \leq 0$, then \eqref{eq:8b3} holds trivially since $\varphi$ is decreasing. Hence it is sufficient to consider the case of $\sup_{\hat{I}_s} (\overline{\gamma} g \varphi) > 0$. Noting that $g \varphi \le 1$, we have by \eqref{eq:8**ii}
\[
\sup_{\hat{I}_s} (\overline{\gamma} g \varphi) \le \sup_{\hat{I}_s} \overline{\gamma}^+\le \frac{1}{4} \tilde{\gamma} (s) 
\]
and hence, since $g \ge \frac{1}{2}$,
\[
g(s)\varphi'(s) \le \frac{1}{2} \varphi'(s) = - \frac{1}{2} \tilde{\gamma} (s) \varphi(s) \le -2 \varphi(s) \sup_{\hat{I}_s} (\overline{\gamma} g \varphi) \,.
\]
Thus, \eqref{eq:8b3} holds for $s < 2r_0$ since by \eqref{eq:8**iii},
\[
\varphi(s) = \exp \left(-\int_0^s \tilde{\gamma} (u) \dd u \right) \ge \frac{1}{2} \,.
\]
We thus have shown that \eqref{eq:8b3} and hence \eqref{eq:8b1} are satisfied. It remains to verify \eqref{eq:8b2}. This condition holds provided
\begin{equation}
\label{eq:8b4} g'(s) \varphi(s) \le \inf_{\hat{I}_s} \frac{-2c\Phi }{\ua}, \tforall s \le r_2
\end{equation}
or, equivalently,
\begin{equation}
\label{eq:8b5} g'(s) \le -2c \frac{1}{\varphi(s)} \sup_{\hat{I}_s} \frac{\Phi }{\ua}, \tforall s \le r_2 \,.
\end{equation}
The function $g$ has been chosen in \eqref{eq:8g} in such a way that this condition is satisfied.

\end{proof}

\section{Proof of perturbation result}\label{sec:proof4}
We now prove the perturbation result in Theorem \ref{thm:4}. Let $x,y \in S$, $x \neq y$. By \eqref{eq:26}, \eqref{eq:27}, \eqref{eq:28} and \eqref{eq:29},
\begin{eqnarray*}
\lefteqn{\E_{x,y} [\tilde{\rho}(\tilde{X}, \tilde{Y}) - \tilde{\rho}(x,y)] \ \le\ \E_{x,y} [\rho(\tilde{X}, \tilde{Y})-\rho(x,y)] - \frac{2b}{p} \P_{x,y} [\tilde X=\tilde Y]} \\
&\le &\E_{x,y}[(d(\tilde{X}, \tilde{Y}) - d(X', Y'))^+] 
 +  \E_{x,y} [\rho(X',Y') - \rho(x,y)] -\frac{2b}{p} \P_{x,y} [\tilde X=\tilde Y] \\
&\le &b -\frac{c}{2} \rho(x,y) - 2b \1_{d(x,y) < r_0} \,.
\end{eqnarray*}
{Note that in the second inequality we have used that $f$ is a contraction.} For $d(x,y) < r_0$ we obtain
\begin{equation}
\label{eq:31*} \E_{x,y} [\tilde{\rho}(\tilde{X}, \tilde{Y}) - \tilde{\rho}(x,y)] \le - \frac{p}{2} \frac{2b}{p} - \frac{c}{2} \rho(x,y) \le -\frac{1}{2} \min (c,p) \tilde{\rho}(x,y) \,.
\end{equation}
For $d(x,y) \ge r_0$, we use the fact that $b = \nicefrac{cf(r_0)}{4}$. Hence,
\[
\tilde{\rho} (x,y) = \rho(x,y) + \frac{2b}{p} \le \left(1+\frac{c}{2p}\right) \rho(x,y) \le \max \left(2, \frac{c}{p}\right) \rho(x,y),\qquad\text{and}
\]
\begin{equation}
\label{eq:32*} \E_{x,y} [\tilde{\rho} (\tilde{X}, \tilde{Y}) - \tilde{\rho}(x,y)] \le b-\frac{c}{2} \rho(x,y) \le -\frac{c}{4} \rho(x,y) \le -\frac{1}{8} \min (c,2p) \tilde{\rho} (x,y).
\end{equation}
The assertion of Theorem \ref{thm:4} follows from \eqref{eq:31*} and \eqref{eq:32*}.

\section{Proof of results for the Euler scheme}\label{sec:proofsEuler}
In this section, we prove the contraction results for the Euler scheme.
\begin{proof}[Proof of Lemma \ref{lem:6} \ref{enum:lem6i}, \ref{enum:lem6ii} and \ref{enum:lem6iii}]
We start with reduction steps. At first, we observe that the definitions of $\hat{\beta} (x,y), \hat{\alpha} (x,y)$ and $\hat{\pi} (x,y)$ only depend on $\hat{r} = \lvert \hat{x} - \hat{y}\rvert$ and $R' = \lvert X' -Y'\rvert$. Thus, the assertions \ref{enum:lem6i} \ref{enum:lem6ii}, \ref{enum:lem6iii} are statements about the coupled random walk transition step $(\hat{x}, \hat{y}) \rightarrow (X', Y')$ defined by \eqref{eq:0c}, and we may assume w.l.o.g.\ that $(\hat{x}, \hat{y}) = (x,y)$. Furthermore, $\hat{r}$ and the law of $R'$ under $\P_{x,y}$ are invariant under translations and rotations of the underlying state space $\R^d$. Therefore, we may even assume w.l.o.g.\ that $\hat{x} = x = 0$ and $\hat{y} = y = r e_1$, where $r =  \hat{r} $ and $e_1, \dots, e_d$ denotes the canonical basis of $\R^d$. Then
\begin{eqnarray}\label{eq:65a}
X' \ =\ \sqrt{h} Z, \qquad Y_\text{refl}'& =& re_1 + \sqrt{h} (I_d - 2e_1e_1^T)Z, \qquad\text{and}\\
\label{eq:200} \nicefrac{\phi_{\hat{y},hI} (X')}{\phi_{\hat{x}, hI} (X')} &=& \nicefrac{\phi_{r,h} (X_1')}{\phi_{0, h} (X_1')} \,,
\end{eqnarray}
{where $X_i' = e_i^TX'$.} Thus, by \eqref{eq:0c}, $Y_i' = X_i'$ for $i \ge 2$, and
\begin{align}
\label{eq:201} Y_1' &= \begin{cases} X_1' & \text{ if } U\le {\phi_{r,h} (X_1') }/{\phi_{0,h} (X_1')}, \\ r -\sqrt{h} Z &\text{ otherwise.}\end{cases}
\end{align}
In particular, 
$ R' = \lvert X'-Y'\rvert = \lvert (X'-Y') e_1\rvert = \lvert X_1' - Y_1'\rvert $.
Since this is distributed as in the one-dimensional case, we may assume w.l.o.g.\ $d=1$.\smallskip

We are now left with a simple one-dimensional problem where $x = 0$, $y = r$, and $\hat{r} = r = \lvert x-y \rvert$. The coupling is given by 
\begin{eqnarray}
\label{eq:201b} X'&=&\sqrt hZ,\qquad Y' \ =\ \begin{cases} X' & \text{ if } U\le {\phi_{r,h} (X') }/{\phi_{0,h} (X')}, \\ r -X' &\text{ otherwise,}\end{cases}
\end{eqnarray}
where $Z \sim N(0,1)$ and $U \sim \Unif (0,1)$ are independent. 
Hence $X'\sim N(0,h)$, the conditional probability given $Z$ that $Y' = X'$ is $\min (1,{\phi_{r,h}}(X')/{\phi_{0,h} (X')})$, and if $Y' \neq X'$, then $R' = \lvert X' -Y' \rvert = \lvert r - 2X' \rvert$. Since $ \phi_{r,h}(t)\le\phi_{0,h}(t)$ if and only if $t\le \nicefrac{r}{2}$, we obtain
\begin{eqnarray*}
\E_{x,y} [R'] &=& \int_{-\infty}^{\infty } |r-2t |\,  (1-{\phi_{r,h}(t)}/{\phi_{0,h} (t)})^+\,  \phi_{0,h} (t) \, d t \\
&=& \int_{-\infty}^{{r}/{2}} (r-2t )  ({\phi_{0,h}(t)}-{\phi_{r,h} (t)})  \, d t \\
&=& \frac 12\int_{-\infty}^{\infty} (r-2t )  ({\phi_{0,h}(t)}-{\phi_{r,h} (t)})  \, d t 
\ =\ r.
\end{eqnarray*}
Here we have used in the third step that the integrand is symmetric w.r.t.\ $t=r/2$, i.e., invariant under the 
transformation $t\mapsto r-t$.
Thus $\hat\beta (x,y)=\E_{x,y} [R'-r]=0$, which proves Assertion (i).\smallskip

Next,
we are going to prove the lower bound for $\hat{\alpha} (x,y)$. Recall from \eqref{eqchoicer0}
and \eqref{eq:int} that $I_r = (0, r+ \sqrt{h})$ for $r < \sqrt{h}$ and $I_r = (r -\sqrt{h}, r)$ for $r \ge \sqrt{h}$. We first consider the case $r \ge \sqrt{h}$. 
Similarly as above, we obtain 
\begin{eqnarray}
\nonumber
\hat{\alpha} (x,y) &\ge & \E_{0,r}[(R'-r)^2 ; R' \in I_r] \\
\nonumber &\ge &\int_{-\infty}^{{r}/{2}} (r-2t -r)^2 \1_{ I_r}(r-2t)\,  ({\phi_{0,h}(t)}-{\phi_{r,h} (t)})\,  d t \\
\label{eq:204}&= &4 \int_{0}^{{\sqrt{h}}/{2}} t^2\,  (1-e^{\nicefrac{(rt - {r^2}/{2})}{h}})\, \phi_{0,h} (t) \, d t \\
\nonumber &=& 4h \int_0^{{1}/{2}} u^2\, (1-e^{\frac{r}{\sqrt{h}} (u-\frac{r}{2\sqrt{h}})})\, \phi_{0,1}(u) \,d u \\
\nonumber &\ge & 4h \int_0^{{1}/{2}}u^2\, (1-e^{u-1/2})\, \phi_{0,1} (u) \,d u.
\end{eqnarray}
Here we have used in the last step that $s \mapsto s (u-{s}/{2})$ is decreasing for $s\ge u$, and $r / \sqrt{h} \ge 1\ge u$ for $u\in [0,1/2]$.
Note that in the second step we only use the reflection behaviour of the coupling. This is due to the fact that the contribution from jumping to the same point would be of negligible order in $h$. 
Now assume $r < \sqrt{h}$. Then $r-2t \in I_r$ if and only if $t \in (-\frac{\sqrt{h}}{2},\frac{r}{2})$. Thus, 
\begin{eqnarray}
\nonumber
\hat{\alpha} (x,y) &\ge &\int_{-{\sqrt{h}}/{2}}^0 (r-2t-r)^2\, ({\phi_{0,h}(t)} -\phi_{r,h} (t))  \,d t \\
\label{eq:205}&= &4h \int_{-{1}/{2}}^0 u^2\, (1-e^{\frac{r}{\sqrt{h}} (u-\frac{r}{2\sqrt{h}})})\, \phi_{0,1}(u) \,d u \\
\nonumber &\ge &4(1-e^{-1} )\,  h \frac{r}{\sqrt{h}} \int_0^{{1}/{2}} u^3\, \phi_{0,1} (u) \,d u.
\end{eqnarray}
Here, we have used in the last step that for $r < \sqrt{h}$ and $u \in [-\nicefrac{1}{2},0]$, we have $s \coloneqq \frac{r}{\sqrt{h}} (u - \frac{r}{2\sqrt{h}}) \in [-1,0]$ and hence
$
e^s - 1\le (1-e^{-1}) s
$.
By combining \eqref{eq:204} and \eqref{eq:205}, we obtain
$
\hat{\alpha} (x,y) \ge c_0 \min (r,\sqrt{h}) \sqrt{h}
$,
where
\[
c_0 = 4\min \left(\int_0^{{1}/{2}} u^2 (1-e^{u-1/2}) \phi_{0,1} (u) \,d u ,\, (1-e^{-1}) \int_0^{{1}/{2}} u^3 \phi_{0,1} (u)\,d u \right) \ge 0.007.
\]
This proves Assertion (ii).\smallskip 

Finally, for $X' \ge {r}/{2}$, we have $\phi_{r,h} (X') \ge \phi_{0,h} (X')$,
and hence $Y'=X'$. Thus,
\begin{eqnarray*}
\pi (x,y) &=& \P_{x,y} [R' = 0] \ =\  \P_{x,y} [X'  = Y']\ \ge\ \P_{x,y} [X' \ge {r}/{2}] \\
&=&\int_{{r}/{2}}^\infty \phi_{0,h}(t) \,d t\ =\ \int_{\frac{r}{2\sqrt{h}}}^\infty \phi_{0,1} (t) \, d t\  \ge\ \int_1^\infty \phi_{0,1}(t) \,d t \
\ge\ 0.15
\end{eqnarray*}
provided $\hat{r} = r \le 2 \sqrt{h}$. Therefore, Assertion (iii) holds as well.
%\end{description}
\end{proof}

\begin{proof}[Proof of Lemma \ref{lem:6} \ref{enum:lem6iv}, \ref{enum:lem6v},
\ref{enum:lem6vi} and \ref{enum:lem6vii}] Note that unlike in the proof of assertions \ref{enum:lem6i}-\ref{enum:lem6iii}, here it is important to consider $\hat{r} \neq r$. Assertions \ref{enum:lem6iv} and \ref{enum:lem6v} are straightforward consequences of Assertion \ref{enum:lem6i}. Indeed, by \eqref{eq:beta} and Lemma \ref{lem:6}\ref{enum:lem6i}, 
\begin{equation}
\label{eq:406} \beta(x,y)\ =\ \hat{\beta} (x,y) + \hat{r}-r \ =\  \hat{r}-r.
\end{equation}
Assuming \ref{enum:c1} and \ref{enum:c3}, this implies \ref{enum:lem6iv}, because
\begin{eqnarray}
\label{eq:407a} |\hat{r}-r|& \le& \lvert (\hat{x} - \hat{y})-(x-y) \rvert \ =\ \lvert  h(b(x) - b(y))\rvert\ \le\ hL r, \quad\text{and}
\\
\label{eq:407} \hat{r} &=& \sqrt{\lvert x-y \rvert^2 + 2h(x-y)\cdot (b(x) - b(y)) + h^2 \lvert b(x) - b(y) \rvert^2}\\
&\le &r \sqrt{1+2hJ + h^2 L^2 }\ \le\ r(1+hJ+\nicefrac{h^2L^2}{2}),\nonumber
\end{eqnarray}
where we use $\sqrt{1+x} \leq 1 + x/2$ for $x \ge -1$. Similarly, assuming \ref{enum:c2} and \ref{enum:c3}, \eqref{eq:406} implies \ref{enum:lem6v}, since $K\le L$ and thus $-2hK + h^2L^2 \geq - 1$ and
\begin{equation}
\label{eq:e1}\hat{r}\ \le\ r \sqrt{1-2hK + h^2L^2}\ \le\ r \left(1-hK + \nicefrac{h^2L^2}{2}\right)\text{\quad for }r \ge \mathcal{R}.
\end{equation}
In order to prove \ref{enum:lem6vi} we assume $\sqrt h\le r\le 1/(4L\sqrt h)$. Then by
\eqref{eq:407a}, $ |\hat{r}-r|\le \sqrt h/4$. Therefore, by a similar computation as in \eqref{eq:204},
\begin{eqnarray*}
\alpha (x,y) &=& \E_{0,r}[(R'-r)^2 ; R' \in (r-\sqrt h,r)]\
\ge \ \frac{h}{16}\, \P_{0,r}[ R' \in (r-\sqrt h,r-\frac{\sqrt h}4)]\\
&\ge & \frac{h}{16}\, \P_{0,r}[ R' \in (\hat r-\frac{3\sqrt h}4,\hat r-\frac{\sqrt h}2)]\
\ge \ \frac{h}{16}\, \int_{{\sqrt{h}}/{4}}^{{3\sqrt{h}}/{8}}  ({\phi_{0,h}(t)} -\phi_{\hat{r},h} (t))  \,d t \\
&= &\frac h{16} \int_{{1}/{4}}^{3/8}  (1-e^{\frac{\hat{r}}{\sqrt{h}} (u-\frac{\hat{r}}{2\sqrt{h}})})\, \phi_{0,1}(u) \,d u \
\ge \ \frac h{16}\int_{{1}/{4}}^{3/8}  (1-e^{ u-\frac{1}{2}})\, \phi_{0,1}(u) \,d u.
\end{eqnarray*}
This shows that \ref{enum:lem6vi} holds with $\tilde c_0:=\frac{1}{16}\int_{1/4}^{3/8}(1-e^{u-1/2})\phi_{0,1}(u)\, du\ge 0.0005$.\smallskip\\
Finally, Assertion \ref{enum:lem6vii} is a direct consequence of Assertion \ref{enum:lem6iii}, since by 
\eqref{eq:407a}, $|\hat r-r|\le \sqrt h$ if $r\le\sqrt h$ and $h\le 1/L$.
\end{proof}

The following proof of Theorem \ref{thm:7} follows the argumentation in the proofs of Theorems \ref{thm:1} and \ref{thm:3} in the case $r\le\mathcal R$. For $r>\mathcal R$, the contractivity is shown by a direct argument based on Lemma \ref{lem:6} (v).

\begin{proof}[Proof of Theorem \ref{thm:7}]
Let $x,y \in \R^d$, $h\in (0,h_0]$ and $a\in \{ 0\}\cup [\sqrt h,\infty )$. \smallskip

\emph{(i)}. We first consider the case where $r = \lvert x-y\rvert > \mathcal R$. By the choice of $h_0$ in the statement of the theorem, $h \le \nicefrac{K}{L^2}$. Therefore, by Lemma \ref{lem:6}, 
\[
\E_{x,y} [R'-r]\ =\ \beta(x,y) \ \le\ -\left(K-\nicefrac{L^2h}{2}\right)hr\ \le\ -\nicefrac{Khr}{2}.
\]
Since $f_a$ is concave with $f'_a \ge \nicefrac{1}{2}$, we immediately obtain
\begin{equation}\label{eq:a130}
\E_{x,y} [f_a(R') -f_a(r) ]\ \le\ f'_a(r) \E_{x,y} [R'-r]\ \le\ -\nicefrac{Khr}{4} ,
\end{equation}
and hence, as $f_a(r)\le r$ and $r>\mathcal R$,
\begin{equation}\label{eq:a130b}
\E_{x,y} [ \rho_a(X',Y') - \rho_a(x,y) ] \ \le\   \frac{-Khr/4}{a+f_a(r)}\rho_a(x,y)\ \le\ \frac{-Kh/4}{1+a/\mathcal R}\rho_a(x,y).
\end{equation}

\emph{(ii)}. Now suppose $r \le \mathcal R$. Since $\hat{r} \le r$ by \eqref{eq:30*}, we have
\begin{equation}
\label{eq:a13} \E_{x,y}[\rho_a(X',Y') - \rho_a(x,y) ]\ \le\ \E_{x,y} [\rho_a(X',Y') - \rho_a(\hat x,\hat y)] \,.
\end{equation}
We can now apply the arguments in the proofs of Theorems \ref{thm:1} and \ref{thm:3} with $\alpha$ and $\beta$ replaced by the corresponding quantities $\hat{\alpha}$ and $\hat{\beta}$ for the coupled random walk transition $(\hat{x}, \hat{y}) \mapsto (X',Y')$, with $r_2$ and $r_1$ replaced by $\mathcal R$. Indeed,
note that since the case of $r>\mathcal R$ has already been considered above, we only need to use the parts of the proofs of Theorems \ref{thm:1} and \ref{thm:3} concerned with the case of $r \leq \mathcal{R}$ and thus Assumptions \ref{enum:a3} and \ref{enum:b3} are not required.\smallskip

We consider first $a=0$. In this case, we can proceed as in the proof of Theorem \ref{thm:3} with $r_0=\sqrt h$. By Lemma \ref{lem:6}, we can choose
$\ua (\hat{r}) = c_0 \min (\hat{r}, \sqrt{h}) \sqrt{h}$,
 $\ob \equiv 0$, $ \overline{\gamma} \equiv 0$, $  \tilde{\gamma} \equiv 0$,
 $ \varphi \equiv 1$,
 \begin{equation}\label{eq:7**f}
 \Phi (u)=u,\quad g_0(u)  = 1-2c \int_0^u \sup_{\hat{I}_s} \frac{\Phi}{\ua}\, ds,\quad\text{and}\quad
 c =  \frac{1}{4} \left(\int_0^{\mathcal{R}} \sup_{\hat{I}_s} \frac{\Phi}{\ua} \dd s \right)^{-1} 
 \end{equation}
in order to satisfy \eqref{eq:8**i}, \eqref{eq:8**ii}, \eqref{eq:8**iii}, \eqref{eq:8c}, \eqref{eq:8g} and \eqref{eq:8f}. Here $\hat I_s$ is defined by \eqref{eq:80}. With these choices we obtain as in the proof of Theorem \ref{thm:3}
\begin{equation}
\label{eq:a14} \E_{x,y}[f_0(R')-f_0(\hat{r}) ] \le -c f_0(\hat{r}) \qquad\text{for } \hat{r} \le \mathcal R,
\end{equation}
where $f_0$ is defined by \eqref{eq:31}. Noting that $\hat{r} \le r$ by \eqref{eq:30*}, 
the bounds in \eqref{eq:a13} and \eqref{eq:a14} now imply that for $r \le \mathcal R$,
\begin{equation}
\label{eq:a15} \E_{x,y} [\rho_0(X',Y')]= 
\E_{x,y}[f_0(R')]  \le (1-c) f_0(\hat{r}) \le (1-c) f_0(r)  = (1-c)\rho_0(x,y).
\end{equation}
It only remains to show $c\ge c_1(0)h$.
Suppose first that $s <  2\sqrt{h}=2r_0 $. Then $\hat{I}_s \subseteq (0,3\sqrt{h})$. Since  $\Phi(u) =u$ and $\ua (u) = c_0 \min (u,\sqrt{h}) \sqrt{h} \ge c_0 u \sqrt{h}/3$ for $u < 3 \sqrt{h}$, we obtain
\begin{equation}
\label{eq:a16} \sup_{\hat{I}_s} \frac{\Phi}{\ua}\ \le\ \sup_{u < 3 \sqrt{h}} \frac{u}{\ua (u)} \ \le\ 3 c_0^{-1} h^{-\nicefrac{1}{2}}\tforall s < 2 \sqrt{h}.
\end{equation}
For $s \ge 2\sqrt{h}$, $\hat{I}_s = (s,s+\sqrt{h})$. Hence $\ua \equiv c_0 h$ on $\hat{I}_s$, and 
\begin{equation}
\label{eq:a17}\sup_{\hat{I}_s} \frac{\Phi}{\ua} = c_0^{-1} h^{-1} (s+\sqrt{h}) \tforall s \ge 2 \sqrt{h}.
\end{equation}
By \eqref{eq:7**f}, \eqref{eq:a16}, \eqref{eq:a17} we see that
\begin{equation}
\label{eq:a18} c^{-1} 
\le 24 c_0^{-1} + 2c_0^{-1} h^{-1} \mathcal{R}^2 + 4c_0^{-1}h^{-\nicefrac{1}{2}} \mathcal{R} = 2c_0^{-1} h^{-1} (\mathcal{R}^2 + 2h^{\nicefrac{1}{2}} \mathcal{R} + 12 h).
\end{equation}
The assertion for $a=0$ now follows by \eqref{eq:a130}, \eqref{eq:a15} and \eqref{eq:a18}.\medskip

Now consider the case $a\ge \sqrt h$. Here we can proceed as in the proof of Theorem \ref{thm:1} with $r_0=\epsilon =\sqrt h$. We now choose the intervals $I_r$ and the dual intervals $\hat I_s$ according to \eqref{eq:501} and \eqref{eq:502}, i.e., $I_r=((r-\sqrt h)^+,r)$ and $\hat I_s=(\max (s,\sqrt h),s+\sqrt h)$. By Lemma \ref{lem:6}, we can choose
$\ua $,
 $\ob $, $ \overline{\gamma} $, $  \tilde{\gamma} $,
 $ \varphi$ and $\Phi $ as above so that conditions \eqref{eq:5stara}, \eqref{eq:5star}, \eqref{eq:503}, \eqref{eq:50}, \eqref{eq:5a}, \eqref{eq:5c}, \eqref{eq:5e}, \eqref{eq:5f} and \eqref{eq:5g} are satisfied. In particular, choosing $a\ge\sqrt h =r_0$ guarantees that 
\eqref{eq:5c} is satisfied since $\beta(x,y) \leq 0$ for all $x$, $y \in \mathbb{R}^d$.
  Note that for $u\in \hat I_s$ we have $\ua (u)\ge c_0h$, because $u\ge \sqrt h$. Setting
 \begin{equation}\label{eq:7**f2}
 g_a(u)  = 1-2c \int_0^u \sup_{\hat{I}_s} \frac{a+\Phi}{\ua}\, ds,\quad
 c =  \min\left( \frac{p_0}{2}\, ,\,\frac{1}{4} \left(\int_0^{\mathcal{R}} \sup_{\hat{I}_s} \frac{a+\Phi}{\ua} \dd s \right)^{-1} \right),
 \end{equation}
we obtain 
\begin{equation}
\label{eq:a15a} \E_{x,y} [\rho_a(X',Y')]\ \le\ (1-c) \rho_a(\hat x,\hat y)\ \le\ (1-c)  \rho_a(x,y),
\end{equation}
where $\rho_a$ is defined by \eqref{eq:31}. The bound $c \geq c_1(a)$ follows as in (\ref{eq:a17}) and (\ref{eq:a18}).
\end{proof}

\begin{proof}[Proof of Theorem \ref{thm:8a}]
	In order to apply Theorem \ref{thm:1}, we set $\varepsilon = r_0 = \sqrt{h}$, and hence $I_r = ((r-\sqrt{h})^{+},r)$ and $\hat{I}_s = (s \vee \sqrt{h}, s + \sqrt{h})$ for all $r$, $s > 0$. 
By Lemma \ref{lem:6}, condition \eqref{eq:11} is satisfied for $h\le h_0$ with
	\begin{equation}\label{eq:alphapib}
	\ua (r) = \tilde c_0 h\, \1_{\sqrt h\le r\le 1/(4L\sqrt h)}, \quad \up (r)=p_0\,1_{r\le\sqrt h} , \quad \text{and}
	\end{equation}
	\begin{equation}\label{eq:betab}
	 \ob (r) = \begin{cases} \Lambda h r &\text{for } r < \mathcal{R}, \\ -{K}hr/2, &\text{for } r \geq \mathcal{R}. \end{cases}
	 \end{equation}
Here we have used that by the assumptions, $h_0L\le 1$ and $h_0L^2\le K$. 
Moreover, the assumption on $h_0$ implies $1/(4L\sqrt h_0)\ge\mathcal R$ since $r_2 > \mathcal{R}$. Hence by \eqref{eq:5stara},	 
	$$ 
	  \overline{\gamma}(r) = 2\ob (r)/\ua (r) ={2}{\tilde c_0^{-1}}\Lambda r\qquad
	  \text{for }\sqrt h\le  r <\mathcal{R}, $$
	  $\overline{\gamma}(r)\le 0$ for $r\ge\mathcal R$, and thus
\eqref{eq:5star}, \eqref{eq:choice r1} and \eqref{eq:5a} are satisfied with	  
\begin{eqnarray}\nonumber
	\tilde{\gamma}(r)& =& {2}{\tilde c_0^{-1}} \Lambda (r + \sqrt{h})\, \1_{r<\mathcal R},\quad r_1=\mathcal R,\quad\text{ and}\\
 \label{eq:EulerGeneralThm1DefVarphi}	
	 \varphi(r) &=& \exp\left({ -{\tilde c_0^{-1}}{\Lambda} \left( (r \wedge \mathcal{R})^2 + 2 \sqrt{h} (r \wedge \mathcal{R}) \right)}\right)  \,.
\end{eqnarray}
	For $a\ge 2\sqrt h$, condition \eqref{eq:5c} is satisfied by \eqref{eq:alphapib},
	\eqref{eq:betab}, and since by assumption, $ \sqrt{h} + 2 \Lambda h^{3/2} / p_0\le 2\sqrt h\le a$ for $h\le h_0$.
	In order to verify (\ref{eq:5d}) we need to choose $r_2 \geq r_1 = \mathcal{R}$ such that 
	\begin{equation}\label{eq:proofEulerLyapunov1}
	2 \int_{\mathcal{R}}^{r_2} \frac{\Phi(s)}{\ua(s)} ds \geq \frac{a + \Phi(r)}{- \ob(r)} \qquad\text{for all } r \geq r_2 \,.
	\end{equation}
	To this end, note that for $r \geq \mathcal{R}$, we have $ \Phi(r) = \Phi(\mathcal{R}) + (r - \mathcal{R})\varphi(\mathcal{R})$. Furthermore, since $1/(4L\sqrt{h}) \leq r_2$ by assumption, on $[\mathcal{R}, r_2]$ we can use the formula for $\ua$ given in (\ref{eq:alphapib}). Hence (\ref{eq:proofEulerLyapunov1}) is satisfied if
	\begin{equation}\label{eq:proofEulerLyapunov2}
	\frac{2\Phi(\mathcal{R})}{\tilde c_0} (r_2 - \mathcal{R}) + \frac{\varphi(\mathcal{R})(r_2 - \mathcal{R})^2}{\tilde c_0} \geq \frac{a + \Phi (\mathcal R) +r\varphi (\mathcal R) }{Kr/2} \quad\text{ for } r \geq r_2 .
	\end{equation}
	Since we assume that $a \leq \Phi (\mathcal R)$, this condition holds if we choose
	\begin{equation}\label{eq:EulerGeneralThm1Defr2}
	r_2 \ =\  \mathcal{R}+\sqrt{2\tilde c_0/K} \,.
	\end{equation}
Hence from Theorem \ref{thm:1} we obtain $\mathbb{E}_{x,y}[\rho_a(X',Y')] \leq (1-c)\rho_a(x,y)$ with $c$ given by (\ref{eq:5f}), for $\rho_a(x,y) = \1_{x \neq y} + f_a(|x-y|)$, where 
	\begin{equation}\label{eq:EulerGeneralThm1Deffa}
	f_a(r) = \int_0^r \varphi(s \wedge r_2) g_a(s \wedge r_2) ds
	\end{equation}
	with $\varphi$ given by (\ref{eq:EulerGeneralThm1DefVarphi}) and $g_a = g$ given by (\ref{eq:5g}).
	Moreover, we can easily bound the second quantity appearing in the definition (\ref{eq:5f}) of $c$. Indeed, for $s<r_2$ and $u\in\hat I_s$ we have $\sqrt h<u<r_2+\sqrt h\le 1/(4L\sqrt h)$ for $h<h_0$. Therefore, $\ua (u)\ge \tilde c_0h$ by \eqref{eq:alphapib}.
	Since $\varphi (s)\ge \varphi (\mathcal R )$ and $\Phi (u)\le u$, we obtain
	\begin{equation*}
	\int_0^{r_2} \frac{1}{\varphi(s)} \sup_{u \in \hat{I}_s} \frac{a + \Phi(u)}{\ua (u)} ds \ 
	\leq\ \frac{1}{\tilde c_0 h \varphi(\mathcal{R})} \left( a r_2 + \int_0^{r_2} (s + \sqrt{h}) ds \right),
	\end{equation*}
	and hence
	\begin{equation*}
	c \geq \min \left( \frac{1}{2}p_0 ,  \frac{\tilde c_0 h \varphi(\mathcal{R})}{4r_2(a + \sqrt{h}) + 2r_2^2} \right) \,.
	\end{equation*}
	This implies the assertion, since by \eqref{eq:EulerGeneralThm1Defr2}, 
	$$r_2^2+2r_2(a+\sqrt h)\le 2\max (\mathcal R^2+2(a+\sqrt h)\mathcal R\, ,\, 2\tilde c_0K^{-1}+2(a+\sqrt h )\sqrt{2\tilde c_0/K}).$$
\end{proof}

In the following variation of Theorem \ref{thm:8a}, Condition \ref{enum:c2} is replaced by a Lyapunov condition:

\begin{thm}[Euler scheme, general case with Lyapunov condition]\label{thm:8b}
\	\\Suppose that Conditions \ref{enum:c1} and \ref{enum:c3} are satisfied and that the transition kernel $p$ of the Euler scheme satisfies Assumption \ref{enum:a4i} with a Lyapunov function $V$, i.e., there exist constants $C$, $\lambda > 0$ such that $pV \leq (1- \lambda)V + C$. Moreover, assume that $\lim_{r \to \infty} \frac{V(x) + V(y)}{r} = \infty$. Let $h_0 = \min \left( \left( \frac{2L}{p_0} + \frac{\tilde c_0 \varphi(r_1)}{4(r_1 + 1)} \right)^{-2}, (16L^2r_2^2)^{-1} \right)$, where $r_1$, $r_2 > 0$ are constants specified in (\ref{eq:proofEulerLyapunovDefr1}) and (\ref{eq:proofEulerLyapunovDefr2}). Suppose further that $a \in (2\sqrt{h}, r_2)$ and let $\rho_a(x,y) = (a + \frac{M}{2C}(V(x) + V(y))) \1_{x \neq y} + f_a(|x-y|)$ with $M$ given by (\ref{eq:EulerLyapunovDefM}) and $f_a$ defined in (\ref{eq:proofEulerLyapunovDeffa}). Let
	\begin{equation*}
	c_2(a) = \frac{1}{4} \min \left( \frac{2p_0}{h} , \frac{\lambda}{h} , 4\varphi(r_1) \Lambda, \frac{\tilde c_0 \varphi(r_2)}{2r_2(a + \sqrt{h}) + r_2^2} \right)
	\end{equation*}	
	with $\varphi$ given by (\ref{eq:proofEulerLyapunovDefVarphi}). 
	Then for all $h \in (0, h_0)$ we have 
	\begin{equation*}
	\E_{x,y}[\rho_a (X',Y')]\le\left(1-c_2(a)h\right) \rho_a (x,y)\quad
	\mbox{for all }x,y\in\r^d.
	\end{equation*}
\end{thm}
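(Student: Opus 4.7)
The strategy is to mimic the proof of Theorem \ref{thm:8a} but to invoke Theorem \ref{thm:2} in place of Theorem \ref{thm:1}, trading the strict-contractivity-outside-a-ball condition \ref{enum:c2} for the Lyapunov hypothesis \ref{enum:a4}. I would first fix $\varepsilon = r_0 = \sqrt h$, giving the intervals $I_r = ((r-\sqrt h)^+, r)$ and $\hat I_s = (s\vee\sqrt h, s+\sqrt h)$. Using Lemma \ref{lem:6}\ref{enum:lem6iv},\ref{enum:lem6vi},\ref{enum:lem6vii} together with the bound $h_0\le 1/(16L^2r_2^2)$ (which secures $1/(4L\sqrt h)\ge r_2$), one verifies that \eqref{eq:11} is satisfied with
\[
\ua(r) = \tilde c_0 h\,\mathbf 1_{\sqrt h \le r \le 1/(4L\sqrt h)},\qquad \up(r) = p_0\,\mathbf 1_{r\le \sqrt h},\qquad \ob(r) = \Lambda h r.
\]
Assumption \ref{enum:a1} is then immediate, \ref{enum:a2} follows from the linearity of $\ob$, \ref{enum:a4i} is a hypothesis of the theorem, and \ref{enum:a4ii} follows from the growth assumption $\lim_{r\to\infty}(V(x)+V(y))/r = \infty$, since $\ob(r)^+$ is only linear in $r$.

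Following the blueprint of Theorem \ref{thm:8a}, I would then set
\[
\tilde\gamma(r) = 2\tilde c_0^{-1}\Lambda(r+\sqrt h)\,\mathbf 1_{r<r_1},\qquad \varphi(r) = \exp\Bigl(-\int_0^r\tilde\gamma(s)\,ds\Bigr),
\]
so that \eqref{eq:5stara}, \eqref{eq:5star}, and \eqref{eq:6a} hold, and define $r_1$ via \eqref{eq:6r1}, $M$ via \eqref{eq:6b}, $r_2$ via \eqref{eq:6e}, $a$ via \eqref{eq:6c}, and $g_a$ via \eqref{eq:6g}. The function $f_a$ in the statement is then exactly \eqref{eq:6000}. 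Note that \ref{enum:a4ii} forces $r_2$ to be finite, since $\ob(r)\varphi(r)\le\Lambda h r$ grows only linearly while $V(x)+V(y)$ grows faster. Theorem \ref{thm:2} then yields the contraction with constant $c$ from \eqref{eq:6f}, and it remains to show $c\ge c_2(a)h$.

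For the fourth entry of the $\min$ in \eqref{eq:6f} I would argue exactly as in the closing step of the proof of Theorem \ref{thm:8a}: using $\ua(u)\ge\tilde c_0 h$ for $u\in\hat I_s$ with $s<r_2$, $\varphi(s)\ge\varphi(r_2)$, and $\Phi(u)\le u$, the integral is dominated by $(\tilde c_0 h\varphi(r_2))^{-1}(ar_2+\int_0^{r_2}(s+\sqrt h)\,ds)$, which produces the term $\tilde c_0\varphi(r_2)h/(2r_2(a+\sqrt h)+r_2^2)$. The first two entries $\tfrac12\inf_{r\le r_0}\up(r)=p_0/2$ and $\lambda/4$ give the $2p_0/h$ and $\lambda/h$ contributions to $c_2(a)$ once a factor of $h$ is extracted. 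The third entry $\tfrac{\lambda M}{16C}\inf_{r\ge r_2}(V(x)+V(y))/\Phi(r)$ is controlled through the defining relation \eqref{eq:6e}, which couples $V(x)+V(y)$ to $16C\Lambda hr\varphi(r)/(\lambda M)$ and, after factoring in $\Phi(r)\le r$, yields the lower bound $\varphi(r_1)\Lambda h$ (up to the $1/4$ in $c_2(a)$).

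The principal obstacle is the self-consistent verification of \eqref{eq:6c}, i.e., showing that $a=2\sqrt h$ is admissible as a lower bound for $r_0+2\sup_{|x-y|\le r_0}(\beta+M)/\pi$. By Lemma \ref{lem:6}\ref{enum:lem6iv} and \ref{enum:lem6vii}, $\beta\le Lh^{3/2}$ and $\pi\ge p_0$ when $r\le\sqrt h$, so \eqref{eq:6c} reduces to $\sqrt h\ge 2Lh^{3/2}/p_0+2M/p_0$. Since $M$ is of order $\tilde c_0\varphi(r_1)/(r_1+1)$ by \eqref{eq:6b} together with an integral estimate analogous to the one above, the stated bound $h_0\le(2L/p_0+\tilde c_0\varphi(r_1)/(4(r_1+1)))^{-2}$ is engineered precisely to make this inequality hold with $a=2\sqrt h$. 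Once this bookkeeping is in place, Theorem \ref{thm:2} delivers the asserted contraction inequality.
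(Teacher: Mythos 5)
The overall strategy---invoking Theorem~\ref{thm:2} in place of Theorem~\ref{thm:1}, with $\varepsilon = r_0 = \sqrt h$, $\ua(r) = \tilde c_0 h$, $\up(r) = p_0\1_{r\le\sqrt h}$, $\ob(r) = \Lambda h r$, and then reading off the pieces of the rate $c$ from \eqref{eq:6f}---matches the paper's proof, and your bookkeeping of the four entries in $c_2(a)$ and of conditions \eqref{eq:6b}, \eqref{eq:6c}, \eqref{eq:6d} via $M$, $a$, $r_2$ is correct in outline.

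However, there is a genuine error in your choice of $\tilde\gamma$: you set $\tilde\gamma(r) = 2\tilde c_0^{-1}\Lambda(r+\sqrt h)\,\1_{r<r_1}$, importing the cutoff from the Theorem~\ref{thm:8a} proof, where $\tilde\gamma$ could legitimately vanish beyond $\mathcal R$ because Condition~\ref{enum:c2} makes $\ob(r)\le 0$ there. In the present Lyapunov setting there is no contractivity outside a ball: $\ob(r)=\Lambda h r > 0$ and hence $\overline\gamma(r) = 2\ob(r)/\ua(r) > 0$ for \emph{all} $r>r_0$. The domination requirement \eqref{eq:5star}, namely $\sup_{u\in\hat I_s}\overline\gamma(u)\le\tilde\gamma(s)$, then fails for every $s\in(r_1,r_2)$ under your definition. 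Concretely, in the case $r\in(r_0,r_2)$ of the proof of Theorem~\ref{thm:2}, the drift term $\beta(x,y)f'(r)$ must be absorbed by the curvature term $\tfrac12\alpha\sup_{I_r}(g\varphi')$ through inequality \eqref{eq:72}; with your cutoff $\varphi'\equiv 0$ on $[r_1,\infty)$, so for $r\in(r_1+\sqrt h,\,r_2)$ that term vanishes while $\beta(x,y)f'(r) > 0$, and \eqref{eq:72} breaks. The paper therefore takes $\tilde\gamma(s) = 2\tilde c_0^{-1}\Lambda(s+\sqrt h)$ \emph{without} any cutoff, which also produces the $\varphi$ announced in \eqref{eq:proofEulerLyapunovDefVarphi}; your $\varphi$ would instead be constant beyond $r_1$ and would not match the theorem's own metric. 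The moral is that in the Lyapunov regime the drift never becomes contractive by itself, so the exponential weight $\varphi$ must keep decaying on the whole interval $(0,r_2)$, and only for $r\ge r_2$ is the drift controlled by the Lyapunov contribution via \eqref{eq:6d}.

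Apart from this, the remaining steps are sound: the verification of \ref{enum:a4ii} from the assumed superlinear growth of $V$, the argument that $a=2\sqrt h$ satisfies \eqref{eq:6c} once $h_0\le(\tfrac{2L}{p_0}+\tfrac{\tilde c_0\varphi(r_1)}{4(r_1+1)})^{-2}$, and the integral estimate giving the fourth entry of $c_2(a)$ all follow the paper essentially verbatim. With $\tilde\gamma$ corrected (drop the indicator $\1_{r<r_1}$), your argument becomes a faithful reproduction of the paper's proof.
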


\begin{example}
	It is easy to see that if the drift $b$ satisfies a linear growth condition $|b(x)|^2 \leq L_0(1 + |x|^2)$ for all $x \in \mathbb{R}^d$ with a constant $L_0 > 0$ (which is implied by \ref{enum:c3} with $L_0 = 2 \max (L^2, |b(0)|^2)$) and a dissipativity condition 
	\begin{equation}\label{eq:dissipativity}
	\langle b(x) , x \rangle \leq M_1 - M_2 |x|^2 \text{ for all } x \in \mathbb{R}^d 
	\end{equation}
	with constants $M_1$, $M_2 > 0$, then the transition kernel $p$ of the Euler scheme satisfies the Lyapunov condition $pV \leq (1-\lambda)V + C$ with the Lyapunov function $V(x) = |x|^2$ and constants $\lambda = 2hM_2 - h^2 L_0$ and $C = h^2 L_0 + 2hM_1 + hd$, whenever $h < 2M_2/L_0$. Since the quadratic function satisfies the growth condition required in Theorem \ref{thm:8b} and the dissipativity condition (\ref{eq:dissipativity}) is significantly weaker than Assumption \ref{enum:c2}, we can apply this result to more general cases than the ones covered by Theorems \ref{thm:8a} and \ref{thm:8}.
\end{example}

\begin{proof}[Proof of Theorem \ref{thm:8b}]	
	Here we want to apply Theorem \ref{thm:2} and hence we need to verify the conditions listed in Subsection \ref{subsec:choiceOfMetricThm2}. Exactly as in the proof of Theorem \ref{thm:8a}, we choose $\varepsilon = r_0 = \sqrt{h}$ and we have the intervals $I_r = ((r-\sqrt{h})^{+},r)$ and $\hat{I}_s = (s \vee \sqrt{h}, s + \sqrt{h})$ for all $r$, $s > 0$. By Lemma \ref{lem:3} we get
	\begin{equation*}
	\ua (r) = \tilde c_0 h, \qquad \ob (r) = \Lambda h r, \qquad \text{and} \qquad \overline{\gamma}(r) = \frac{2}{\tilde c_0}\Lambda r .
	\end{equation*}
	Similarly as in the previous proof, the formula for $\ua(r)$ is valid for all $r \in (\sqrt{h}, r_2)$ since $h_0 \leq (16L^2r_2^2)^{-1}$, although here $r_2$ is given by (\ref{eq:proofEulerLyapunovDefr2}). Moreover, we have
	\begin{equation}\label{eq:proofEulerLyapunovDefVarphi}
	\tilde{\gamma}(s) = \frac{2}{\tilde c_0} \Lambda (s + \sqrt{h}), \qquad \text{and} \qquad \varphi(r) = \exp\left( - \frac{\Lambda}{\tilde c_0}\left( r^2 + 2 \sqrt{h} r \right) \right) \,.
	\end{equation}
	Now we choose $r_1$ as in (\ref{eq:6r1}), based on Assumption \ref{enum:a4i}. Namely,
	\begin{equation}\label{eq:proofEulerLyapunovDefr1}
	r_1 := \sup \left\{ |x-y| = r : x, y \in \mathbb{R}^d \,, V(x) + V(y) < 4C/\lambda \right\} \,.
	\end{equation}
	 In order for (\ref{eq:6b}) to be satisfied, it is sufficient to choose $M$ such that
	\begin{equation*}
	M \leq \frac{1}{4} \left( \int_0^{r_1} \frac{1}{\varphi(s)} \sup_{u \in \hat{I}_s} \frac{1}{\tilde c_0 h} ds \right)^{-1} = \frac{h \tilde c_0}{4} \left( \int_0^{r_1} \frac{1}{\varphi(s)} ds \right)^{-1} \,.
	\end{equation*}
	Note, however, that $\varphi(s) \geq \varphi(r_1)$ for all $s > 0$ and hence
	\begin{equation*}
	\frac{h \tilde c_0}{4} \left( \int_0^{r_1} \frac{1}{\varphi(s)} ds \right)^{-1} \geq \frac{h \tilde c_0}{4} \left( \int_0^{r_1} \frac{1}{\varphi(r_1)} ds \right)^{-1} = \frac{h \tilde c_0}{4} \frac{\varphi(r_1)}{r_1} \geq \frac{h \tilde c_0 \varphi(r_1)}{4(r_1 + 1)} \,.
	\end{equation*}
	Thus if we choose
	\begin{equation}\label{eq:EulerLyapunovDefM}
	M = \frac{h \tilde c_0 \varphi(r_1)}{4(r_1 + 1)} \,,
	\end{equation}
	then condition (\ref{eq:6b}) is indeed satisfied. Note that $r_1 + 1$ is chosen here instead of $r_1$ in order to prevent the value of $M$ from being too large when $r_1$ is very small (or even zero). Now condition (\ref{eq:6c}) reads as
	\begin{equation}\label{eq:proofEulerLyapunov3}
	a \geq \sqrt{h} + \frac{2}{p_0} \left( \Lambda h^{3/2} + \frac{h \tilde c_0 \varphi(r_1)}{4(r_1 + 1)} \right) \,.
	\end{equation}
	However, since we choose $h \leq h_0 \leq \left( \frac{2L}{p_0} + \frac{\tilde c_0 \varphi(r_1)}{4(r_1 + 1)} \right)^{-2}$, we see that (\ref{eq:proofEulerLyapunov3}) holds for all $a \geq 2 \sqrt{h}$. It remains to verify condition (\ref{eq:6d}), for which we need 
	\begin{equation*}
	V(x) + V(y) \geq \frac{16C}{\lambda M} \varphi(r) \Lambda h r 
	\end{equation*}
	to hold for all $r \geq r_2$. Since $\varphi$ is decreasing, using the choice of $M$ in (\ref{eq:EulerLyapunovDefM}), we see that it is sufficient to have 
	\begin{equation}\label{eq:proofEulerLyapunov4}
	V(x) + V(y) \geq \frac{16C}{\lambda} \frac{4(r_1 + 1)}{\tilde c_0} \Lambda r \text{ for all } r \geq r_2 \,.
	\end{equation}
	Since we assume that $\limsup_{r \to \infty} \frac{V(x) + V(y)}{r} = \infty$, we can indeed choose $r_2$ large enough so that (\ref{eq:proofEulerLyapunov4}) and hence (\ref{eq:6d}) holds. More precisely, we can choose
	\begin{equation}\label{eq:proofEulerLyapunovDefr2}
	r_2 :=\sup \left\{ |x-y| = r : x, y \in \mathbb{R}^d \,, \frac{V(x) + V(y)}{r} < \frac{64C(r_1 + 1)\Lambda}{\lambda \tilde c_0} \right\} \,.
	\end{equation}
	As a consequence, from Theorem \ref{thm:2} we get $\mathbb{E}_{x,y}[\rho_a(X',Y')] \leq (1-c)\rho_a(x,y)$ with $\rho_a(x,y) = (a + \frac{M}{2C}(V(x) + V(y)))\1_{x \neq y} + f_a(|x-y|)$, where
	$c$ is given by (\ref{eq:6f}) and 
	\begin{equation}\label{eq:proofEulerLyapunovDeffa}
	f_a(r) = \int_0^r \varphi(s \wedge r_2) g(s \wedge r_2) ds
	\end{equation}
	with $\varphi$ given by (\ref{eq:proofEulerLyapunovDefVarphi}) and $g$ given by (\ref{eq:6g}).
	Now it only remains to prove the lower bound on the constant $c$. Similarly as in the proof of Theorem \ref{thm:8a}, we have
	\begin{equation*}
	\frac{1}{8} \left( \int_0^{r_2} \frac{1}{\varphi(s)} \sup_{u \in \hat{I}_s} \frac{a + \Phi(u)}{\ua (u)} ds \right)^{-1} \geq \frac{\tilde c_0 h \varphi(r_2)}{8r_2(a + \sqrt{h}) + 4r_2^2} \,,
	\end{equation*}
	since $\varphi(s) \geq \varphi(r_2)$ for $s \leq r_2$. Moreover, due to our choice of $M$ in (\ref{eq:EulerLyapunovDefM}), using $\Phi(r) \leq r$ and (\ref{eq:proofEulerLyapunov4}), we have
	\begin{equation*}
	\frac{\lambda M}{16 C} \frac{V(x) + V(y)}{\Phi(r)} \geq \varphi(r_1) h \Lambda \text{ for all } r \geq r_2 \,.
	\end{equation*}
	This finishes the proof.
\end{proof}

\begin{proof}[Proof of Theorem \ref{thm:8}]
Let $x,y \in \R^d$ and set $r = \lvert x-y \rvert$ and $\hat{r} = \lvert \hat{x} - \hat{y} \rvert$. We assume $h \in (0,h_0]$ where $h_0$ is given by
\eqref{eq:h0general}.\smallskip

We consider at first the case
where $r\ge\mathcal R$. By the choice of $h_0$, we have $L^2h\le K$ for $h\le h_0$. Therefore, for $r\ge \mathcal R$, the concavity of $f$ and  Lemma \ref{lem:6} (v) imply 
\begin{equation}
\label{eq:93} \E_{x,y} [f(R') - f(r)]\, \le\,  \E_{x,y} [R' - r]f'(r)\, \le\, -\frac K2hr f'(r)
\, \le\, -\frac K2e^{-qr_1}hf(r).
\end{equation}
Here, we have used in the last step that $f(r)\le r$ and $f'(r)\ge \exp (-qr_1)$.
The assertion \eqref{eq:45} now follows by the choice of $r_1$ and $q$ in \eqref{eq:35} and \eqref{eq:37}.\medskip

From now on, we assume $r<\mathcal R$. Recall that 
\begin{equation}\label{IrhatEuler}
I_{\hat{r}} = \begin{cases}
(0, \hat{r} + \sqrt{h}) & \text{ if } \hat{r} \leq \sqrt{h}, \\
(\hat{r} - \sqrt{h}, \hat{r}) & \text{ if } \hat{r} > \sqrt{h},
\end{cases}
\end{equation}
cf.\ \eqref{eq:int}, i.e., $u(\hat r)= \sqrt h  \1_{\hat r<\sqrt h}$, $l(\hat r)= \sqrt h  \1_{\hat r\ge \sqrt h}$. Since by Taylor's formula and by concavity of $f$,
\begin{eqnarray*}
f(R')-f(\hat r)&=&\int_{\hat r}^{R'}f'(s)\,ds\ =\ (R'-\hat r)f'(\hat r)\, +\, \int_{\hat r}^{R'}\int_{\hat r}^sf''(t)\, dt\, ds\\
&\le &(R'-\hat r)f'(\hat r)\, +\,\frac 12\left[ ((R'-\hat r)\wedge u(\hat r))\vee (-l(\hat r))\right]^2\, \sup_{I_{\hat r}}f'',
\end{eqnarray*}
we can conclude by Lemma \ref{lem:6} \ref{enum:lem6i} and \ref{enum:lem6ii} that
\begin{eqnarray}
\nonumber \E_{x,y} [f(R') - f(r)] &=&  f(\hat{r} ) - f(r)\, +\, \E_{x,y} [f(R') - f(\hat{r})] \\
&\le &(\hat r -r)f'(r)\, +\, \frac{1}{2} c_0\min (\hat r\sqrt h,h) \sup_{I_{\hat{r}}} f''.\label{eq:9*}
\end{eqnarray}
We are going to show that the expression on the right hand side of \eqref{eq:9*} 
is bounded from above by $-c_2hf(r)$. Note first that by \eqref{eq:407a} and \eqref{eq:407},
\begin{equation}
\label{eq:hatrr} \hat r-r  \le  \min (L,J+L^2h/2)\, hr\ =\ \Lambda hr.
\end{equation}
By the choice of $q$ and $h_0$ in \eqref{eq:37} and \eqref{eq:h0general}, $e^{qh_0L\mathcal R}\le e^{c_0/28}\le e^{1/28}\le 3/2$. Therefore, 
\begin{eqnarray}
\nonumber f'(r)\ =\ e^{-qr}\ = \ e^{q(\hat r-r)}f'(\hat r)& \le & e^{qLhr}f'(\hat r)\ \le\ \frac 32f'(\hat r),\quad\text{and thus}\\
\label{eq:Q1}  (\hat r-r)f'(r) &\le &\frac 32\Lambda h\hat rf'(\hat r).
\end{eqnarray}
Here we have used that $\hat{r} - r \leq \Lambda h \hat{r}$ by \eqref{eq:hatrr} if $r \leq \hat{r}$, whereas for $r > \hat{r}$ \eqref{eq:Q1} is automatically satisfied and hence it holds for all $r < \mathcal{R}$.
Furthermore, by \eqref{eq:h0general}, $hL\le h_0L\le 1/6$. Therefore,
$\hat r -r \le Lhr\le r/6$, and thus
\begin{equation}
\label{eq:Q2}
f(\hat r) \ \ge \ f\left(\frac 56r\right)\ \ge\ \frac 56 f(r),
\end{equation}
because $f$ is increasing and concave with $f(0)=0$. {Note that our choice of the bound $h_0L \leq 1/6$ is to some extent arbitrary and a different choice would lead to $5/6$ above being replaced by a different factor.} By \eqref{eq:9*}, \eqref{eq:Q1} and
\eqref{eq:Q2}, we see that the contractivity condition \eqref{eq:45} holds
provided
\begin{equation}
\label{Q0a}\frac 32\Lambda\, h\hat rf'(\hat r)\, +\, \frac{1}{2} c_0\min (\hat r\sqrt h,h) \sup_{I_{\hat{r}}} f''\ \le\ -\frac 65c_2hf(\hat r).
\end{equation}
Furthermore, by \eqref{eq:hatrr}, $\hat r\le (1+Lh)\mathcal R=r_1$.
Since
$f''(r)=-qe^{-qr}1_{r\le r_1}$ is increasing, \eqref{IrhatEuler} implies
\begin{equation}\label{eq:sup}
\sup_{I_{\hat{r}}} f'' = \begin{cases}
 -qe^{-q(\hat{r} + \sqrt{h})} & \text{ if } \hat{r} \leq \sqrt{h}, \\
-qe^{-q\hat{r}} & \text{ if } \hat{r} > \sqrt{h}.
\end{cases} 
\end{equation}
We now consider these two cases separately:\medskip

\emph{(i) $\hat r>\sqrt h$}. Noting that $f'(\hat r)=e^{-q\hat r}$ and 
$f(\hat r)=(1-e^{-q\hat r})/q\le 1/q$, we see that \eqref{Q0a} is satisfied in this case provided
\begin{equation}
\label{Q0b} 3\Lambda\, \hat r\, -\,  c_0q\ \le\ -\frac {12c_2}{5q}e^{q\hat r}\qquad\text{for }\hat r< r_1.
\end{equation}
We have chosen $q$ in \eqref{eq:37} such that 
$$c_0q\ =\ 7\Lambda\, \mathcal R\ \ge\ 6\Lambda\,r_1.$$
Therefore, the left hand side in \eqref{Q0b} is bounded from above by $-c_0q/2$, and thus \eqref{Q0b} and \eqref{Q0a} are satisfied if
\begin{equation}
\label{Q0c} c_2\ \le \ \frac 5{24}c_0q^2e^{-qr_1}.
\end{equation} 
By \eqref{eq:35} and \eqref{eq:37},
we see that
the constant $c_2$ has been defined in \eqref{eq:c2} in such a way that \eqref{Q0c} holds true, and thus the assertion \eqref{eq:45} is indeed satisfied.
\medskip

\emph{(ii) $\hat r\le\sqrt h$}. Noting that $f'(\hat r)\le 1$ and 
$f(\hat{r}) \leq \hat{r}$, we see by \eqref{eq:sup} that \eqref{Q0a} is satisfied for $\hat r\le\sqrt h$ provided
\begin{equation}
\label{Q0d} 3\Lambda h\,  +\,  \frac {12}{5}c_2h\ \le\ c_0q\sqrt he^{-2q\sqrt h}.
\end{equation}
This condition holds if both
\begin{equation}
\label{Q0e} 2q\sqrt h\le 1/2\qquad\text{and}\qquad
 3e^{1/2}(\Lambda +c_2) \sqrt h\le c_0q.
\end{equation}
It can now be easily verified that our choice of $h_0$ in \eqref{eq:h0general}
ensures that \eqref{Q0e} holds for $h\le h_0$. Indeed, since $q$ is given by \eqref{eq:37}, we have $2q\sqrt h= 14 c_0^{-1}\Lambda\mathcal R\sqrt h\le 1/2$. Moreover, since {$c_2\le 11c_0^{-1}\Lambda^2\mathcal R^2$}, we obtain
$$\frac{c_0q}{3e^{1/2}(\Lambda +c_2)}= \frac{7\Lambda\mathcal R}{3e^{1/2}(\Lambda +c_2)}
\ge \frac{7}{3e^{1/2}}\frac{\mathcal R}{1+11c_0^{-1}\Lambda\mathcal R^2}\ge 
\frac{7}{6e^{1/2}}\min\left(\mathcal R,\frac{c_0}{22\Lambda\mathcal R}\right).$$
Hence \eqref{Q0a} holds true, and thus the assertion \eqref{eq:45} is satisfied in this case as well.
\end{proof}

\section{Proof of results for \MALA}\label{sec:proofof\MALA}
\begin{proof}[Proof of Theorem \ref{thm:10}]
By \eqref{eq:m6}
\begin{equation}
\label{eq:100} X'\ =\ x+ h\, b(x) + \sqrt{h - {h^2}/{4}}\, Z
\end{equation}
where $b(x) = - \frac{1}{2} \nabla U(x)$. By \eqref{eq:d1} and \eqref{eq:m2}, $b$ is Lipschitz continuous on $B_R^{-} \coloneqq \{ x \in \R^d : \| x \|_{-} \le R\}$. 
Therefore conditions \ref{enum:c1} and \ref{enum:c3} in Section \ref{sec:euler} hold when we restrict to $B_R^-$. Moreover, by \eqref{eq:d2}, condition \ref{enum:c2} is satisfied as well for appropriate values of $K$ and $R$ depending on $K_c$ and $\mathcal{R}_c$. 
Noting that $h<2$ implies $ h - \frac{h^2}{4} > \frac{h}{2}$, it is not difficult to see that the proof of Theorem \ref{thm:8} carries over to our slightly modified setup. 
Therefore, similarly to Theorem \ref{thm:8}, we can find for any fixed $R \in (0,\infty)$ a concave strictly increasing function $f$ with $f(0)=0$ and constants $c_2 > 0$, $h_0 > 0$ such that for $h \in (0,h_0)$,
\begin{equation}
\label{eq:101} \E_{x,y} [f(\lvert X' - Y' \rvert)] \le (1-c_2 h) f(\lvert x-y\rvert), \tforall x,y \in B_{R}^{-} \,.
\end{equation}
We now want to apply the perturbation result in Theorem \ref{thm:4}. Setting $d(x,y) = \lvert x-y \rvert$ and $\rho(x,y) = f(\lvert x-y \rvert)$, we see that condition \eqref{eq:27} holds with $c = c_2 h$. Moreover, by Lemma \ref{lem:10b} below, there exists a constant $p > 0$ depending only on $R$, such that for $h_0$ sufficiently small and $h \in (0,h_0)$, condition \eqref{eq:29} is satisfied for any $x,y \in B_R^-$ with $r_0 = \sqrt{h}$. Thus, to apply Theorem \ref{thm:4}, it remains to show that \eqref{eq:28} holds with a constant $b \ge 0$ satisfying 
\begin{equation}
\label{eq:102} b \le {c f(r_0)}/{4} = c_2 {h f(\sqrt{h})}/{4} \,.
\end{equation}
To this end notice that for $x,y \in B_R^{-}$,
\begin{equation}
\label{eq:103} \E_{x,y} [(\lvert \tilde{X} - \tilde{Y} \rvert - \lvert X' - Y' \rvert)^+] \le \E_{x,y} [\lvert \tilde{X} - X' \rvert] + \E_{x,y} [ \lvert \tilde{Y} - Y' \rvert] \,.
\end{equation}
Furthermore, since $\tilde{X} = X'$ if the proposal is accepted and $\tilde{X} = x$ otherwise, we obtain by \eqref{eq:100} and {Lemma \ref{lem:3}}, that for any $x \in B_R^-$ and $h \in (0,2)$, 
\begin{align*}
\E_{x,y} [\lvert \tilde{X} - X' \rvert ] &= \E_{x,y}[ \lvert X' -x \rvert ; \tilde{U} > \alpha_h (x,X') ] \\
&= \E_{x,y} [\lvert X' -x \rvert (1-\alpha_h (x,X'))] \\
&\le h\lvert b(x) \rvert \E_{x,y}[1-\alpha_h (x,X')] + \sqrt{h} \E_{x,y} [\lvert Z \rvert (1-\alpha_h (x,X')]\\
&\le c' h^{\frac{5}{2}} \lvert b(x) \rvert + c'' h^2 \E_{x,y} [\lvert Z \rvert^2]^{\frac{1}{2}},
\end{align*}
where $c'$ and $c''$ are finite constants. Noting that $\E_{x,y} [\lvert Z \rvert^2] = d$ and 
\[
|b(x)| \le d \| b(x) \|_{-} = d {\| \nabla U(x) \|_-}/{2}
\]
we see that there exists a finite constant $c'''$ such that for $x \in B_R^-$ and $h \in (0,2)$
\[
\E_{x,y} [\lvert \tilde{X} - X' \rvert ] \le c''' h^{\frac{3}{2}} (dh + d^{\frac{1}{2}} h^{\frac{1}{2}}) \,.
\]
A corresponding bound holds for $\E_{x,y} [\lvert \tilde{Y} - Y'\rvert]$ with $y \in B_R^-$. Hence, by \eqref{eq:103}, condition \eqref{eq:28} is satisfied with
\[
b = 2c''' h^{\frac{3}{2}} (dh +  d^{\frac{1}{2}} h^{\frac{1}{2}}) \,.
\]
Since the right hand side in \eqref{eq:102} is of order $\Omega(h^{{3}/{2}})$, we conclude that \eqref{eq:102} holds for $dh < h_1$ provided $h_1 \in (0,\infty)$ is chosen sufficiently small. Hence, Theorem \ref{thm:4} applies and by \eqref{eq:30} we obtain
\[
\E_{x,y}[\tilde{f} (\lvert \tilde{X} - \tilde{Y} \rvert)] \le (1-c_3 h) \tilde{f} (\lvert x -y \rvert)
\]
for any $x,y \in B_R^-$ and $h < h_1 d^{-1}$, where $c_3 = \min (\nicefrac{c_2}{8}, \nicefrac{p}{4h})$ and $\tilde{f} (r)  = f(r) + 2bp^{-1} \1_{r>0}$.
\end{proof}

\begin{lemma}\label{lem:10b}
For any fixed $R \in (0,\infty)$ there exist constants $p, h_0 \in (0,\infty)$ such that
\[
\P_{x,y}[\tilde{X} = \tilde{Y} ] \ge p
\]
for any $h \in (0,h_0)$ and $x,y \in B_R^-$ with $\lvert x-y \rvert \le \sqrt{h}$. 
\end{lemma}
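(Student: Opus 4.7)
The key observation is that the event $\{\tilde X=\tilde Y\}$ contains the joint event that (a) the two proposals agree, $\{X'=Y'\}$, and (b) both proposals are accepted, $\{\tilde U\le \alpha_h(x,X')\wedge \alpha_h(y,Y')\}$; on (a)$\cap$(b) we have $\tilde X=X'=Y'=\tilde Y$. My plan is to bound each piece separately and combine them via a union bound, exploiting that $\tilde U\sim \mathrm{Unif}(0,1)$ is independent of $(X',Y')$.

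For (a), the maximal/reflection coupling \eqref{eq:0c} is exactly the one analysed in Lemma \ref{lem:6}, adapted in the obvious way from noise $\sqrt h\, Z$ to the MALA noise scale $\sqrt{h-h^2/4}\, Z$ (this adaptation is carried out in the proof of Theorem \ref{thm:10} and amounts to replacing $\sqrt h$ by $\sqrt{h-h^2/4}\ge \sqrt{h/2}$ in the bounds). Lemma \ref{lem:6}(iii) then yields $\P_{x,y}[X'=Y']\ge p_0$ as soon as $|\hat x-\hat y|\le 2\sqrt{h-h^2/4}$, where $\hat x=x-(h/2)\nabla U(x)$. Condition \eqref{eq:d1} with $n=2$ gives a uniform Euclidean bound on the Hessian $\nabla^2 U$ on $B_R^-$, so $\nabla U$ is Lipschitz on $B_R^-$ with a constant $L_R$ depending only on $R$ and the parameters in \eqref{eq:d1}. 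Therefore $|\hat x-\hat y|\le (1+hL_R/2)|x-y|\le (1+hL_R/2)\sqrt h$, and for $h\le h_0$ with $h_0$ sufficiently small this is bounded by $2\sqrt{h-h^2/4}$. Hence $\P_{x,y}[X'=Y']\ge p_0$.

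For (b), I apply Lemma \ref{lem:3} with $k=1$ to each marginal. Since $X'\sim p_h(x,\cdot)$ and $Y'\sim p_h(y,\cdot)$ under $\P_{x,y}$, and since $\|x+\nabla V(x)\|_-=\|\nabla U(x)\|_-$ is uniformly bounded on $B_R^-$ (by \eqref{eq:d1} with $n=1$ combined with $\|\cdot\|_-\le|\cdot|$, which controls $\|\nabla U(x)\|_-\le C_1\max(1,R^{p_1})$), there is a constant $\bar P_R$, depending only on $R$ and on the constants in \eqref{eq:d1}, such that
\[
\E_{x,y}[1-\alpha_h(x,X')]\le \bar P_R\, h^{3/2} \quad\text{and}\quad \E_{x,y}[1-\alpha_h(y,Y')]\le \bar P_R\, h^{3/2}
\]
for all $x,y\in B_R^-$ and $h\in(0,2)$. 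Using the independence of $\tilde U$ and the union bound,
\[
\P_{x,y}\bigl[\tilde U>\alpha_h(x,X')\wedge \alpha_h(y,Y')\bigr]\le \E_{x,y}[1-\alpha_h(x,X')]+\E_{x,y}[1-\alpha_h(y,Y')]\le 2\bar P_R\, h^{3/2}.
\]

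Combining the two estimates,
\[
\P_{x,y}[\tilde X=\tilde Y]\ge \P_{x,y}[X'=Y']-\P_{x,y}\bigl[\tilde U>\alpha_h(x,X')\wedge\alpha_h(y,Y')\bigr]\ge p_0-2\bar P_R\, h^{3/2},
\]
which is at least $p\coloneqq p_0/2$ once $h_0$ is chosen small enough that $2\bar P_R h_0^{3/2}\le p_0/2$. I expect the main (but routine) technical points to be the uniform bound on $\|\nabla U(x)\|_-$ for $x\in B_R^-$ and the transfer of Lemma \ref{lem:6}(iii) from the Gaussian kernel with variance $hI_d$ to the one with variance $(h-h^2/4)I_d$; neither presents a genuine obstacle since both reduce to rescaling the one-dimensional computation in the proof of Lemma \ref{lem:6}.
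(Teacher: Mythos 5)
Your proposal is correct and follows essentially the same route as the paper's proof: you bound $\P_{x,y}[X'=Y']$ from below via Lemma \ref{lem:6}(iii) and the Lipschitz property of $\nabla U$ on $B_R^-$, bound the rejection probabilities via Lemma \ref{lem:3}, and combine by a union bound (the paper phrases it as $\{\tilde X\neq\tilde Y\}\subseteq\{X'\neq Y'\}\cup\{\tilde X\neq X'\}\cup\{\tilde Y\neq Y'\}$, which is the complementary form of your inclusion). Your explicit remark that the threshold in Lemma \ref{lem:6}(iii) should be $2\sqrt{h-h^2/4}$ rather than $2\sqrt h$, absorbed by shrinking $h_0$, is a small but welcome refinement of a point the paper glosses over.
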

\begin{proof}
Let $\hat{x} = x + hb(x)$ where $b(x) = -\frac{1}{2} \nabla U(x)$. By \eqref{eq:d1} and \eqref{eq:m2}, $b$ is Lipschitz continuous on $B_R^{-}$. For $x,y \in B_R^-$ and $h \in (0, L_R^{-1})$
\[
\lvert \hat{x} - \hat{y}\rvert \le (1+hL_R) \lvert x-y \rvert \le 2\lvert x-y\rvert \,,
\]
where $L_R \in (0,\infty)$ is the Lipschitz constant of $b$ on $B_R^-$. Hence, by Lemma \ref{lem:6}, there exists a constant $p_0 \in (0,\infty)$ such that for any $x,y \in B_R^-$ and $h \in (0, L_R^{-1})$
\begin{equation}
\label{eq:10b1} \P_{x,y} [X' = Y'] \ge p_0 \1_{\lvert \hat{x} - \hat{y} \rvert \le 2 \sqrt{h}} \ge p_0 \1_{\lvert x-y \rvert \le \sqrt{h}} \,.
\end{equation}
Furthermore,
\begin{equation}
\label{eq:10b2} \P_{x,y} [\tilde{X} \neq \tilde{Y}] \le \P_{x,y} [X' \neq Y'] + \P_{x,y}[ \tilde{X} \neq X'] + \P_{x,y}[\tilde{Y} \neq Y'] \,.
\end{equation}
By \eqref{eq:10b1}, the first probability on the right hand side is bounded by $1-p_0$ for $\lvert x-y \rvert \le \sqrt{h}$. Moreover, by Lemma \ref{lem:3}, there exists a finite constant $c' \in (0,\infty)$ such that for any $x,y \in B_R^-$ and $h \in (0,2)$
\begin{equation}
\label{eq:10b3} \P_{x,y}[\tilde{X} \neq X'] = \E_{x,y}[1-\alpha_h(x,X')] \le c' h^{\frac{3}{2}} \,.
\end{equation}
A corresponding upper bound holds for $\P_{x,y} [\tilde{Y} \neq Y']$. Hence, by combining \eqref{eq:10b1}, \eqref{eq:10b2} and \eqref{eq:10b3}, we conclude that there exist constants $h_0> 0$ and $p = \frac{p_0}{2}> 0$ such that
\[
\P_{x,y}[\tilde{X} \neq \tilde{Y}] \le 1-p
\]
for any $h \in (0,h_0)$ and $x,y \in B_R^{-}$ with $\lvert x-y \rvert \le \sqrt{h}$.
\end{proof}

\bibliographystyle{amsplain}
\bibliography{bibcoupling}
\end{document}